\pgfplotsset{compat=1.10}
\theoremstyle{plain} 
\newtheorem{theorem}{Theorem}[section]
\newtheorem{proposition}[theorem]{Proposition}
\newtheorem{corollary}[theorem]{Corollary}
\newtheorem{lemma}[theorem]{Lemma}
\newtheorem{definition}[theorem]{Definition}
\theoremstyle{remark}
\newtheorem{remark}[theorem]{Remark}
\newcommand{\R}{\mathbb{R}}
\newcommand{\eps}{\varepsilon}
\newcommand{\de}{\partial}
\newcommand{\HH}{\mathcal{H}}
\DeclareMathOperator{\dist}{dist}
\DeclareMathOperator{\Reg}{Reg}
\DeclareMathOperator{\Sing}{Sing}
\DeclareMathOperator{\Flat}{flat}
\newcommand{\bo}[1]{\boldsymbol{#1}}
 \DeclarePairedDelimiter\abs{\lvert}{\rvert}      
 \DeclarePairedDelimiter\norm{\lVert}{\rVert}    
\def\XXint#1#2#3{{\setbox0=\hbox{$#1{#2#3}{\int}$}
     \vcenter{\hbox{$#2#3$}}\kern-.5\wd0}}
\numberwithin{equation}{section}
\title[Regularity of the two-phase free boundaries]{Regularity of the free boundary for the two-phase Bernoulli problem}
\author[G.~De Philippis]{Guido De Philippis}
\address{\textit{G.~De Philippis:} Courant Institute of Mathematical Sciences, New York University, 251 Mercer St., New York, NY 10012, USA.}
\email{guido@cims.nyu.edu}
\author[L.~Spolaor]{Luca Spolaor}
\address{\textit{L.~Spolaor:} Department of Mathematics, University of California, San Diego, La Jolla, CA, 92093
 }
\email{lspolaor@ucsd.edu}
\author[B.~Velichkov]{Bozhidar Velichkov}
\address{\textit{B.~Velichkov:} Universit\'a degli Studi di Napoli Federico II
Dipartimento di Matematica e Applicazioni ``Renato Caccioppoli'' Via Cintia, Monte S. Angelo I-80126 Napoli, Italy}
\email{bozhidar.velichkov@gmail.com}
\subjclass[2010]{35R35, 49Q10, 47A75}
\keywords{Two-point function, Second variation, Convex potential}\keywords{Free boundary regularity, Two phase Bernoulli problem, Multiphase shape optimization}
\begin{document}

\begin{abstract}
We prove a regularity theorem for the  free boundary of minimizers of the two-phase Bernoulli problem, completing the analysis started by Alt, Caffarelli and Friedman in the 80s. As a consequence, we also show regularity of minimizers of the multiphase spectral optimization problem for the principal eigenvalue of the Dirichlet Laplacian.
\end{abstract}
\maketitle


\section{Introduction}
We consider the two-phase functional $J_{\text{\sc tp}}$ defined, for every open set $D\subset\R^d$ and every function $u:D\to\R$, as
\begin{equation}\label{e:J}\tag{TP}
J_{\text{\sc tp}}(u, D):=\int_{D} |\nabla u|^2\,dx+\lambda^2_+\abs{ \Omega_u^+\cap D}+\lambda^2_-\abs{\Omega_u^-\cap D},
\end{equation}
where the constants $\lambda_+>0$ and $\lambda_->0$ are given and  fixed, and the two phases
$$\Omega_u^+=\{u>0\}\qquad\text{and}\qquad \Omega_u^-=\{u<0\}$$
are the positivity sets of the functions $u^+:=\max\{u,0\}$ and $u^-:=\max\{-u,0\}$. 

\smallskip

We say that a function $u:D\to \R$ is a \emph{local minimizer of $J_{\text{\sc tp}}$ in $D$} if 
$$J_{\text{\sc tp}}\big(u, \Omega\big)\le J_{\text{\sc tp}}\big(v, \Omega\big),$$
for all open sets $\Omega$ and functions \(v:D\to\R\) such that \(\overline \Omega\subset D\) and \(v=u\) on \(D\setminus \Omega\).  

\smallskip
%


In this  paper we aim to study the regularity  of   the free boundary $\partial\Omega_u^+\cup \partial\Omega_u^-\cap D$ for 
local minimizers of $J_{\text{\sc tp}}$ in $D$.
Our main result is a full description of $\partial\Omega_u^+$ and $\partial\Omega_u^-$ around two-phase points: 
\[
\Gamma_{\text{\sc tp}}:=\de \Omega_u^+\cap \de \Omega_u^-\cap D.
\]
 More precisely, we prove that, in a neighborhood of a two-phase point, the sets $\Omega_u^+$ and $\Omega_u^-$ are two $C^{1,\eta}$-regular domains touching along the closed set $\Gamma_{\text{\sc tp}}$.

\begin{theorem}[Regularity around two-phase points]\label{thm:main}
Let $u:D\to\R$ be a local minimizer of $J_{\text{\sc tp}}$ in the open set $D\subset\R^d$. Then, for every two-phase point $x_0\in \Gamma_{\text{\sc tp}}\cap D$, there exists a radius $r_0>0$ (depending on $x_0$) such that $\de \Omega_u^\pm\cap B_{r_0}(x_0)$ are $C^{1,\eta}$ graphs for some \(\eta>0\).
\end{theorem}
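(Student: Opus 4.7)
The plan is to follow the classical three-step paradigm for Bernoulli-type problems: (i) Lipschitz regularity and nondegeneracy of $u$, (ii) classification of blowups at two-phase points, and (iii) an improvement-of-flatness theorem yielding $C^{1,\eta}$ regularity of each $\partial\Omega_u^\pm$.

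\emph{Preliminaries.} Local minimizers of $J_{\text{\sc tp}}$ are Lipschitz continuous in $D$, and each phase $\Omega_u^\pm$ is linearly nondegenerate at its boundary points, i.e.\ $\sup_{B_r(y)} u^\pm \gtrsim r$ for every $y\in\partial\Omega_u^\pm$ and small $r$. Both properties, essentially due to Alt--Caffarelli--Friedman, pass to rescaling limits and give uniform lower density bounds on both phases near a two-phase point.

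\emph{Blowup classification.} For $x_0\in \Gamma_{\text{\sc tp}}$, consider the rescalings $u_{x_0,r}(x):=r^{-1}u(x_0+rx)$. By the preliminaries and the compactness of minimizers, along any $r_n\to 0$ a subsequence converges locally uniformly (and in $H^1_{\mathrm{loc}}$) to a global minimizer $u_0$. A Weiss-type monotonicity formula forces $u_0$ to be $1$-homogeneous, and the nondegeneracy of both phases at $x_0$ guarantees that both phases of $u_0$ are nontrivial. The Alt--Caffarelli--Friedman monotonicity formula, combined with homogeneity and minimality, then forces $u_0$ to be a two-plane solution
\[
u_0(x)=\alpha\,\langle x,\nu\rangle_+ - \beta\,\langle x,\nu\rangle_-,
\]
for some unit $\nu\in\R^d$ and constants $\alpha\ge\lambda_+$, $\beta\ge\lambda_-$ tied by the free boundary relation $\alpha^2-\beta^2=\lambda_+^2-\lambda_-^2$.

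\emph{Flatness iteration.} The classification yields $\varepsilon$-flatness: for every $\varepsilon>0$ there is $r_\varepsilon=r_\varepsilon(x_0)>0$ so that in $B_{r_\varepsilon}(x_0)$ the graph of $u$ lies in a tube of width $\varepsilon r_\varepsilon$ around some two-plane solution with parameters $(\alpha,\beta,\nu)$. I would then prove an improvement-of-flatness statement: there exist $\varepsilon_0,\rho\in(0,1)$ and $\eta>0$ such that if $u$ is $\varepsilon$-flat in $B_1$ with $\varepsilon\le\varepsilon_0$, then $u$ is $\rho^{1+\eta}\varepsilon$-flat in $B_\rho$ around a new two-plane solution whose direction differs from the old one by $O(\varepsilon)$. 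Iterating at $x_0$ gives the Dini control $|\nu_r-\nu|\lesssim r^\eta$, which is the standard ingredient to conclude that $\partial\Omega_u^\pm$ are $C^{1,\eta}$ graphs near $x_0$.

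\emph{Main obstacle.} The improvement of flatness is the core of the argument, and I would prove it by a contradiction/compactness argument in the spirit of De~Silva. After rescaling the flatness parameter, the normalized perturbations $\varphi^\pm$ of the two-plane solution converge to a pair of functions, harmonic in the half-spaces $\{x\cdot\nu>0\}$ and $\{x\cdot\nu<0\}$ respectively, coupled along $\{x\cdot\nu=0\}$ by a linear transmission condition obtained by linearizing the balance $\alpha^2-\beta^2=\lambda_+^2-\lambda_-^2$ together with the graphicality of the perturbed interface. The key linear input is a $C^{1,\eta}$ estimate up to the interface for this transmission problem, which is then transferred back to the nonlinear problem by compactness. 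The main difficulty is to carry out this linearization in a genuinely two-sided way that respects both phases simultaneously, and to ensure that $\alpha,\beta$ remain uniformly bounded below along the iteration (so the problem does not degenerate to a one-phase one near $x_0$). This two-sided linearization is precisely the step missing from the analysis of Alt--Caffarelli--Friedman and constitutes the main technical challenge.
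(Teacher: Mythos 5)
Your outline is the right skeleton (Lipschitz/nondegeneracy, blow-up classification, improvement of flatness by compactness), but the core of your flatness step contains a genuine gap: you assume the linearized problem is always a \emph{linear transmission problem} along $\{x\cdot\nu=0\}$. That is only correct when the blow-up slope $\alpha$ is quantitatively larger than $\lambda_+$ (in the paper's notation, when $\alpha-\lambda_+\gg \|u-H_{\alpha,\bo{e}_d}\|_{L^\infty}$, i.e.\ $\ell=\infty$), which forces all nearby free boundary points to be interior two-phase points. Near a \emph{branching} point, where $\alpha$ is comparable to $\lambda_+$, the set $\{u=0\}$ can have positive measure and the two free boundaries $\partial\Omega_u^+$ and $\partial\Omega_u^-$ separate; there is no single interface to linearize a transmission condition on. The correct limiting problem in that regime is a \emph{two-membrane (thin-obstacle type) problem}: two harmonic functions $v_\pm$ on the half-balls with the constraint $v_+\le v_-$ on $\{x_d=0\}$, equality of the (weighted) normal derivatives on the contact set, and one-sided Neumann conditions on the jump set. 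This is a genuinely nonlinear limit whose solutions are only $C^{1,\sfrac12}$ up to the interface (via Athanasopoulos--Caffarelli), which is why the decay exponent $\gamma$ must be taken below $\sfrac12$; your proposed $C^{1,\eta}$ estimate for a linear transmission problem does not exist in this regime. You flag the danger of ``degeneration to one-phase'' but your fix (keeping $\alpha,\beta$ bounded below) does not address it: the problem genuinely does branch, and one needs a dichotomy between the two regimes with two different linearized problems.

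A second, related gap is in your compactness step. The partial Harnack inequality cannot be run with the class of two-plane solutions alone: the function $\lambda_+(x_d+\eps_1)^+-\lambda_-(x_d-\eps_2)^-$ is $\max\{\eps_1,\eps_2\}$-close to $H_{\lambda_+,\bo{e}_d}$ yet its flatness with respect to two-plane solutions does not improve under rescaling. Near branching points one must therefore trap $u^+$ and $u^-$ \emph{separately} between independent translates $\lambda_\pm(x_d+a_\pm)^\pm$, $\lambda_\pm(x_d+b_\pm)^\pm$ and improve the four constants simultaneously (with the additional care that when $\lambda_+=\lambda_-$ the sliding argument needs a preliminary quantitative increase of the slope via the Harnack inequality and the Hopf lemma). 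Without enlarging the comparison class in this way, the Ascoli--Arzel\`a compactness of the normalized perturbations $\varphi^\pm$ in a full neighborhood of $x_0$ — which your contradiction argument requires — cannot be established.
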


Combining \cref{thm:main} with the known regularity  theory  for  {\it one-phase problem},  one obtains the following result, which provides a full description  of the  free boundary of local minimizers of $J_{\text{\sc tp}}$.

\begin{corollary}[Regularity of the  free boundary]\label{cor:full_free_boundary}
Let $u:D\to\R$ be a local minimizer of $J_{\text{\sc tp}}$ in the open set $D\subset\R^d$. Then, each of the  sets $\partial\Omega_u^+\cap D$ and $\partial\Omega_u^-\cap D$ can be decomposed as a disjoint union of a regular and a (possibly empty) singular part 
$$\partial\Omega_u^\pm\cap D=\Reg(\partial\Omega_u^{\pm})\cup \Sing(\partial\Omega_u^{\pm}),$$ 
with the  following properties.
\begin{enumerate}[label=\textup{(\roman*)}]
\item The regular part ${\rm Reg}(\partial\Omega_u^{\pm})$ is a relatively open subset of $\partial\Omega_u^\pm\cap D$ and is locally the graph of a  $C^{1,\eta}$-regular  function, for some $\eta>0$. Moreover, the two-phase free boundary is regular, that is, 
\[
\Gamma_{\text{\sc tp}}\cap D\subset \Reg(\partial\Omega_u^{\pm}).
\]
\item The singular set ${\rm Sing}(\partial\Omega_u^{\pm})$ is a closed subset of $\partial\Omega_u^{\pm}\cap D$ of Hausdorff dimension at most $d - 5$. Precisely, there is a critical dimension\footnote{The critical dimension \(d^*\) is the first dimension, for which there exists a one-homogeneous non-negative local minimizer of the one-phase functional with a singular free boundary. Currently, it is only known that \(5\le d^*\le 7\), \cite{CaJeKe,JeSa,DeJe}.}
 $d^\ast\in[5,7]$ such that 
\begin{itemize}
\item[-] if $d<d^\ast$, then $\Sing(\partial\Omega_u^\pm)=\emptyset$;
\item[-] if $d=d^\ast$, then ${\rm Sing}(\partial\Omega_u^\pm)$ is locally finite in $D$; 
\item[-] if $d> d^\ast$, then  $\Sing(\partial\Omega_u^\pm)$ is a closed \((d-d^\ast)\)-rectifiable  subset of $\partial\Omega_u^\pm\cap D$  with locally finite \(\HH^{d-d^\ast}\) measure.
\end{itemize}	
\end{enumerate}	 
\end{corollary}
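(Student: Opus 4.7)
\medskip\noindent\textbf{Proof proposal.}
The plan is to reduce the corollary to a combination of \cref{thm:main} and the classical regularity theory for the one-phase Bernoulli (Alt--Caffarelli) functional. First, I would split each of the boundaries $\partial\Omega_u^\pm\cap D$ according to the type of point: the two-phase points $\Gamma_{\text{\sc tp}}\cap D$, and the one-phase points, i.e.\ points $x_0\in\partial\Omega_u^+$ that are not in $\overline{\Omega_u^-}$ (and symmetrically for $\partial\Omega_u^-$). \Cref{thm:main} already places every two-phase point inside a $C^{1,\eta}$ piece of free boundary, which immediately yields $\Gamma_{\text{\sc tp}}\cap D\subset \Reg(\partial\Omega_u^\pm)$ and shows that two-phase points never contribute to the singular set.

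Next, I would reduce the analysis at a one-phase point to the classical Alt--Caffarelli setting. If $x_0\in\partial\Omega_u^+\setminus\overline{\Omega_u^-}$, there exists a ball $B_r(x_0)\subset D$ in which $u\ge 0$, so $u=u^+$ there. For any competitor $v$ with $v=u$ on $\partial B_r(x_0)$, truncation to $v^+$ can only decrease $J_{\text{\sc tp}}$, since the $v^-$ contribution is a sum of non-negative terms; hence, comparing $J_{\text{\sc tp}}(u, B_r(x_0))$ with $J_{\text{\sc tp}}(v^+, B_r(x_0))$ and using that on non-negative functions the latter coincides with the one-phase functional
\[
J_+(w, B_r(x_0)):=\int_{B_r(x_0)} |\nabla w|^2\,dx + \lambda_+^2\,|\{w>0\}\cap B_r(x_0)|,
\]
one concludes that $u$ is a local minimizer of $J_+$ on $B_r(x_0)$. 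The symmetric argument applied to $-u$ takes care of the one-phase points of $\partial\Omega_u^-$.

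Having reduced to one-phase minimization, I would then invoke the available regularity theory. The classical theorem of Alt--Caffarelli, together with the higher-regularity improvements of De Silva and Kinderlehrer--Nirenberg, shows that the flat part of $\partial\{u^\pm>0\}$ is locally a $C^{1,\eta}$ graph and is relatively open in the one-phase free boundary; the singular set has Hausdorff dimension at most $d-d^\ast$, by Weiss' monotonicity formula and the dimension-reduction argument combined with the partial classifications of Caffarelli--Jerison--Kenig, Jerison--Savin and De Silva--Jerison that define the critical dimension $d^\ast\in[5,7]$. Local finiteness of the $\HH^{d-d^\ast}$-measure and $(d-d^\ast)$-rectifiability of the singular set follow from the quantitative stratification and Reifenberg-type arguments of Naber--Valtorta, as carried out for the Alt--Caffarelli problem by Edelen--Engelstein.

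Finally, I would set $\Reg(\partial\Omega_u^\pm)$ equal to the union of $\Gamma_{\text{\sc tp}}\cap D$ with the flat one-phase free boundary of $\{u^\pm>0\}\cap D$, and $\Sing(\partial\Omega_u^\pm)$ equal to the Alt--Caffarelli singular set; the claimed properties then follow from the two ingredients above. The only substantive point left to verify is that $\Reg$ is relatively open, which is clear since the flat one-phase points form an open subset of $\partial\{u^\pm>0\}$ while \cref{thm:main} provides an entire $C^{1,\eta}$ neighborhood of free boundary around each two-phase point. In short, the conceptual obstacle is concentrated in \cref{thm:main}; the corollary itself amounts to careful bookkeeping of the two types of points and an appeal to well-developed one-phase theory.
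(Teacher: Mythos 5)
Your proposal is correct and follows essentially the same route as the paper: \cref{thm:main} handles a neighborhood of $\Gamma_{\text{\sc tp}}$, while at one-phase points one reduces (by the truncation/comparison argument you sketch) to local minimality for the one-phase Alt--Caffarelli functional and quotes \cite{altcaf,weiss,EdEn} for the regular/singular decomposition, dimension bound, and rectifiability. You merely spell out details (the reduction to one-phase minimality and the openness of $\Reg$) that the paper's two-line proof leaves implicit.
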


As a second corollary of our analysis, by applying the same type of arguments as in \cite{stv} we obtain a complete regularity results for the following shape optimization problem, studied in \cite{bucve, benve, mono}, where the optimal sets have the same qualitative behavior as the sets $\Omega_u^+$ and $\Omega_u^-$ in \cref{cor:full_free_boundary}, contrary to the classical optimal partition problem studied in \cite{caflin1,caflin2,coteve1,coteve2,coteve3} (which corresponds to the case of zero weights $m_i=0$, for every $i$).

\begin{corollary}[Regularity for a multiphase shape optimization problem]\label{t:multiphase}
Let $D$ be a $C^{1,\gamma}$-regular bounded open domain in $\R^d$, for some $\gamma>0$ and $d\ge 2$. Let $n\ge 2$ and  $m_i>0$, $i=1,\dots,n$ be given. Let $(\Omega_1,\dots,\Omega_n)$ be 
a solution of the following optimization problem:
\begin{equation}\label{e:multi}\tag{SOP}
\min\Big\{\sum_{i=1}^n\big(\lambda_1(\Omega_i)+m_i|\Omega_i|\big)\ :\ \Omega_i\subset D\ \text{open}\,;\ \Omega_i\cap\Omega_j=\emptyset\quad\text{for}\quad i\neq j\Big\}. 
\end{equation}
where \(\lambda_1(\Omega_i)\) is the first eigenvalue for the Dirichlet Laplacian in \(\Omega_i\).

Then, the free boundary $\partial\Omega_i$ of each of the sets $\Omega_i$, $i=1,\dots,n$, can be decomposed as the disjoint union of a regular part ${\rm Reg}(\partial\Omega_i)$ and a (possibly empty) singular part ${\rm Sing}(\partial\Omega_i)$, where: 
\begin{enumerate}[label=\textup{(\roman*)}]
\item The regular part ${\rm Reg}(\partial\Omega_i)$ is a relatively open subset of $\partial\Omega_i$ and is locally the graph of a  $C^{1,\eta}$-regular  function, for some $\eta>0$. Moreover, both the contact set with the boundary of the box and the two-phase free boundaries are regular, that is, 
\[
\partial\Omega_i\cap\partial D\subset \Reg(\partial\Omega_i)\quad \text{and}\quad\partial\Omega_i\cap\partial\Omega_j\subset \Reg(\partial\Omega_i)\quad\text{for every}\quad j\in\{1,\dots,n\}\setminus\{i\}.
\]
	\item The singular set $\ {\rm Sing}(\partial\Omega_i)$ is a closed subset of $\partial\Omega_i$ of Hausdorff dimension at most $d - 5$. Precisely, 
	\begin{itemize}
		\item[-] if $d<d^\ast$, then $\Sing(\partial\Omega_i)=\emptyset$,
		\item[-] if $d=d^\ast$, then ${\rm Sing}(\partial\Omega_i)$ is locally finite in $D$,
		\item[-] if $d> d^\ast$, then  $\Sing(\partial\Omega_i)$ is a closed \((d-d^\ast)\)-rectifiable  subset of $\partial\Omega_i$  with locally finite \(\HH^{d-d^\ast}\) measure,
	\end{itemize}	
where $d^\ast\in\{5,6,7\}$  is the critical dimension from \cref{cor:full_free_boundary}.
\end{enumerate}	 
\end{corollary}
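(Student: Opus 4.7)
The plan is to reduce the multiphase problem \eqref{e:multi} locally to the two-phase Bernoulli setting, so that \cref{thm:main} and the known one-phase (and boundary) regularity theory can be applied essentially as in \cite{stv}. For each $i$ let $u_i\ge 0$ denote an $L^2$-normalized first Dirichlet eigenfunction of $\Omega_i$, extended by zero to all of $D$, so $-\Delta u_i=\lambda_1(\Omega_i)u_i$ in $\Omega_i$ and $\Omega_i=\{u_i>0\}$ up to a null set. Standard arguments (Lipschitz estimates and non-degeneracy coming from the PDE, already carried out in \cite{bucve,benve,mono}) imply that each $u_i$ is locally Lipschitz and non-degenerate on $\partial\Omega_i$.

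The key reduction, exactly as in \cite{stv}, is to test the minimality of the $n$-tuple against competitors that perturb only one or two phases inside a small ball $B_r(x_0)\subset D$. Combined with the Rayleigh quotient characterization of $\lambda_1$, this yields two types of quasi-minimality: first, each $u_i$ is a local quasi-minimizer of the one-phase Alt--Caffarelli functional $\int|\nabla v|^2+(\lambda_1(\Omega_i)+m_i)|\{v>0\}|$, up to a lower order $L^2$-perturbation coming from the denominator of the Rayleigh quotient; second, whenever $x_0\in\partial\Omega_i\cap\partial\Omega_j$, the function $w:=u_i-u_j$ is a local quasi-minimizer of a two-phase functional of the form $J_{\text{\sc tp}}$ with $\lambda_+^2=\lambda_1(\Omega_i)+m_i$ and $\lambda_-^2=\lambda_1(\Omega_j)+m_j$, again modulo a lower order $L^2$-perturbation. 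Because these perturbations are of order $r^{d+2}$ on $B_r(x_0)$ while the leading energy is of order $r^d$, they do not affect the blow-up analysis, and every blow-up of $w$ at a free boundary point is a blow-up of an exact two-phase local minimizer in the sense of \eqref{e:J}.

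With these reductions in place, the trichotomy in the corollary follows by classifying $x_0\in\partial\Omega_i$ as (a) an interior one-phase point, (b) a two-phase point $x_0\in\partial\Omega_i\cap\partial\Omega_j$ for some $j\ne i$, or (c) a boundary point $x_0\in\partial\Omega_i\cap\partial D$. In case (a) the stratification of $\partial\Omega_i$ into $\Reg$ and $\Sing$ with the stated Hausdorff-dimension and $(d-d^*)$-rectifiability bounds is the content of the one-phase regularity theory for quasi-minimizers of the Alt--Caffarelli functional. In case (b) we apply \cref{thm:main} to $w$ and conclude that $\partial\Omega_i$ and $\partial\Omega_j$ are $C^{1,\eta}$ graphs touching along $\Gamma_{\text{\sc tp}}$ near $x_0$, so $x_0\in\Reg(\partial\Omega_i)$. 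In case (c), boundary regularity for the one-phase problem on $C^{1,\gamma}$ domains (as used in \cite{stv}) gives $x_0\in\Reg(\partial\Omega_i)$. Both containments $\partial\Omega_i\cap\partial D\subset\Reg(\partial\Omega_i)$ and $\partial\Omega_i\cap\partial\Omega_j\subset\Reg(\partial\Omega_i)$ then follow.

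The main obstacle is the rigorous passage from \emph{exact} minimizers of \eqref{e:J}, for which \cref{thm:main} is stated, to the \emph{quasi}-minimizers $w=u_i-u_j$ produced by the eigenvalue/volume functional. This requires verifying that all ingredients of the proof of \cref{thm:main} (the monotonicity formula, the Weiss-type energy, non-degeneracy, and the compactness of blow-ups) are stable under the $O(r^{d+2})$ perturbation induced by the Rayleigh quotient. This stability is by now standard in the Alt--Caffarelli literature and was explicitly implemented for the one-phase version of exactly this shape optimization problem in \cite{stv}; the two-phase case is obtained by combining those arguments with \cref{thm:main} verbatim.
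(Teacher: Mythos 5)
Your overall reduction (perturb one or two phases, obtain quasi-minimality of $w=u_i-u_j$ with an $O(r^{d+2})$ error, then invoke two-phase regularity near $\partial\Omega_i\cap\partial\Omega_j$ and the one-phase/boundary theory elsewhere) is the same as the paper's, and the citations to \cite{stv}, \cite{bucve}, \cite{brla}, \cite{rtv} are the right ones. But there is a genuine gap at the step you yourself flag as ``the main obstacle'': you claim that \cref{thm:main} applies to $w$ ``verbatim'' once one checks stability of the monotonicity formula, non-degeneracy, and compactness under the $O(r^{d+2})$ perturbation. It does not. The function $w$ satisfies $\Delta w=-\lambda_1(\Omega_i)u_i+\lambda_1(\Omega_j)u_j\neq 0$ in $\{w\neq 0\}$, so hypothesis (c) of \cref{t:eps_regularity} (harmonicity) fails at every finite scale, and $w$ is not a local minimizer of $J_{\text{\sc tp}}$, so \cref{thm:main} is not applicable as stated. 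Knowing that blow-up limits of $w$ are exact minimizers (hence two-plane solutions) classifies the tangents but does not yield the improvement of flatness at finite scales, which is the whole content of the regularity statement. The paper's resolution is not a variational stability check but a restatement of the flatness-decay theorem for \emph{viscosity solutions with a bounded right-hand side} (\cref{thm:below}): one first shows, via \cite[Proposition 4.3]{stv} and the arguments of \cref{sub:viscosity}, that $w$ is a viscosity solution of the two-phase system with $f=-\lambda_1(\Omega_i)u_i+\lambda_1(\Omega_j)u_j$, and then observes that $\|f\|_{L^\infty}$ rescales like $O(r)$, so either it dominates the flatness (and the decay statement is trivial) or it is $o(\eps_k)$ along the contradicting sequence and the linearization argument of \cref{sec:3} goes through unchanged. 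Redoing the argument purely variationally for almost-minimizers, as your last paragraph suggests, is a substantially different and harder project; it is essentially what \cite{DETV} is announced to carry out.

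Two smaller points. First, the weights are misidentified: the correct two-phase functional has $\lambda_+^2=m_i$ and $\lambda_-^2=m_j$ (the eigenvalue term contributes to the right-hand side $f$ and to the $O(r^{d+2})$ error, not to the volume weights; equivalently, the free boundary condition is $|\nabla w^+|^2-|\nabla w^-|^2=m_i-m_j$), not $\lambda_\pm^2=\lambda_1(\Omega_i)+m_i$. Second, your trichotomy (a)/(b)/(c) tacitly assumes that near each point of $\partial\Omega_i$ at most one other phase is present and that the contact set with $\partial D$ does not meet the two-phase set; this requires the absence of triple points in $D$ and of two-phase points on $\partial D$, which the paper takes from \cite{bucve} and \cite{mono} and which must be stated, since otherwise the local reduction to a two-phase problem breaks down.
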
	 

\subsection{Regularity of  local minimizers of the Bernoulli functional}\label{sub:intro:history}

The study of the regularity of minimizers of  $J_{\text{\sc tp}}$ started  in the seminal paper of Alt and Caffarelli \cite{altcaf}, which was dedicated to the one-phase case, in which  $u$ is non-negative. In this case, it is sufficient to work with the one-phase functional 
\begin{equation}\label{e:J1}\tag{OP}
J_{\text{\sc op}}(u, D):=\int_{D} |\nabla u|^2\,dx+\lambda^2_+\abs{ \Omega_u^+\cap D},
\end{equation}
as the negative phase $\Omega_u^-$ is  empty. In \cite{altcaf} it was proved  that   for a local minimizer $u$  of $J_{\text{\sc op}}$, the free boundary $\partial\Omega_u^+\cap D$ decomposes into a  $C^{1,\eta}$-regular set ${\rm Reg}(\partial\Omega_u^+)$ and a closed singular set ${\rm Sing}(\partial\Omega_u^+)$ of zero $\HH^{d-1}$-Hausdorff measure. A precise estimate on the Hausdorff dimension of ${\rm Sing}(\partial\Omega_u^+)$ was then  given by Weiss in \cite{weiss} as a consequence of his monotonicity formula and  its rectifiability was established by Edelen-Engelstein in \cite{EdEn}. In fact, the results in \cref{cor:full_free_boundary} are an immediate consequence of \cref{thm:main} and the known regularity for  the one-phase parts  
\[
\Gamma^+_{\text{\sc op}}:=\big(\partial\Omega_u^+\setminus\partial\Omega_u^-\big)\cap D\qquad \text{and} \qquad \Gamma^-_{\text{\sc op}}:=\big(\partial\Omega_u^-\setminus\partial\Omega_u^+\big)\cap D.
\]
Indeed:
\begin{itemize}
	\item the regularity of ${\rm Reg}(\partial\Omega_u^\pm)$ (\cref{cor:full_free_boundary} (i)) follows by \cref{thm:main} and  \cite[Theorem 8.1]{altcaf};
	\item the estimates on the dimension of the singular set ${\rm Sing}(\partial\Omega_u^\pm)$ (\cref{cor:full_free_boundary} (ii)) are again a consequence of \cref{thm:main} (which shows that singularities can appear only on the one-phase parts of the free boundary) and the results in  \cite{weiss,EdEn}.
\end{itemize}
 \medskip
 
The regularity of local minimizers with two-phases (that is, local minimizers of $J_{\text{\sc tp}}$ which change sign) was first addressed by Alt, Caffarelli and Friedman in \cite{alcafr}, where 
the authors consider free boundary functionals that weight also the zero level set of $u$:
\begin{equation}\label{e:J3}\tag{\sc ACF}
J_{\text{\sc acf}}(u, D):=\int_{D} |\nabla u|^2\,dx+\lambda^2_+\big|\Omega_u^+\cap D\big|+\lambda^2_-\big|\Omega_u^-\cap D\big|+\lambda^2_0\big|\{u=0\}\cap D\big|,
\end{equation}
where $\lambda_+\ge \lambda_0\ge 0$ and $\lambda_-\ge \lambda_0\ge 0$. When $D\subset \R^2$ is a planar  domain, and under the additional assumptions 
$$\lambda_+\neq\lambda_-\qquad \text{and}\qquad \lambda_0=\lambda_+\, \text{ or }\,\lambda_-,$$  
they showed that the free boundaries  $\partial\Omega_u^+\cap D$ and $\partial\Omega_u^-\cap D$  are $C^1$-regular curves. The key observation here is that the condition 
\begin{equation}\label{e:acf_condition}
\lambda_0=\lambda_+\ \text{ or }\ \lambda_0=\lambda_-\,,
\end{equation} 
forces the level set $\{u=0\}$ to have zero Lebesgue measure. Thus, the two boundaries $\partial\Omega_u^+\cap D$ and $\partial\Omega_u^-\cap D$ coincide and the solution $u$ satisfies the transmission condition  
\begin{equation}\label{e:trans}
|\nabla u^+|^2-|\nabla u^-|^2=\lambda^2_+-\lambda^2_-\qquad\text{on}\qquad \partial\Omega_u^+=\partial\Omega_u^-.
\end{equation}
The free boundary regularity for local minimizers of $J_{\text{\sc acf}}$ in the case \eqref{e:acf_condition} is already known in any dimension. Indeed, 
the regularity of the free boundary $\partial\Omega_u^+=\partial\Omega_u^-$, for functions which are harmonic (or solve an elliptic PDE) in $\Omega_u^+\cup\Omega_u^-$ and satisfy the transmission condition \eqref{e:trans}, is today well-understood, after the seminal work of Caffarelli  \cite{caf1,caf2,caf3} (see also  the book  \cite{cs}) and the more recent results of De Silva-Ferrari-Salsa  \cite{dfs, dfs2, dfs3}, which are based on the techniques introduced by De Silva in \cite{desilva} and which are central also in the present paper.

To the best of our knowledge, the only known regularity result  for minimizers of \eqref{e:J3} in the case when 
\begin{equation}\label{e:no_acf_condition}
\lambda_+>\lambda_0\ \text{ and }\ \lambda_->\lambda_0,
\end{equation} 
is due to the second and  third authors in  \cite{spve}, where it is  proved that, in dimension \(d=2\),  the free boundaries $\partial\Omega_u^+$ and $\partial \Omega_u^-$ are \(C^{1,\eta}\) regular. The proof relies on a novel  epiperimetric type inequality which applies only in dimension two and it  was recently extended (still in dimension two) to almost-minimizers by the same two authors and Trey in \cite{stv}.




\medskip

In this paper, we complete the analysis started by Alt, Caffarelli and Friedman in \cite{alcafr}, by proving a regularity result for the free boundaries of the local minimizers of \eqref{e:J3}, in the case \eqref{e:no_acf_condition} and  in any dimension $d\ge 2$.
Indeed, \cref{thm:main} and \cref{cor:full_free_boundary} apply directly to \eqref{e:J3} as 
the local minimizers of  \eqref{e:J3}, corresponding to the parameters $\lambda_0$, $\lambda_+$ and $\lambda_-$, are local minimizers of \eqref{e:J} with parameters 
\[
\lambda_+'=\sqrt{\lambda_+^2-\lambda_0^2}\quad \text{ and }\quad \lambda_-'=\sqrt{\lambda_-^2-\lambda_0^2}\,.
\]

%

\subsection{One-phase, two-phase and branching points on the free boundary}\label{sub:intro:branching}$ $\\
Let $u:B_1\to\R$ be a (local) minimizer of $J_{\text{\sc tp}}$ in $B_1$ and let, as above, $\Omega_u^\pm=\{\pm u>0\}$. Notice that, the zero level set $\{u=0\}$ might have positive Lebesgue measure in $B_1$ and also non-empty interior, contrary to what happens with the minimizers of \eqref{e:J3} with $\lambda_+=\lambda_0$. This introduces a new element in the analysis of the free boundary, which can now switch from one-phase to two-phase at the so-called branching points, at which the zero level set looks like a cusp. 
Precisely, this means that the free boundary $\partial\Omega_u^+\cap B_1$ (the same holds for the negative phase $\partial\Omega_u^-\cap B_1$) can be decomposed into:
\begin{itemize} 
\item a set of one-phase points $\Gamma_{\text{\sc op}}^+:=\partial\Omega_u^+\setminus \partial\Omega_u^-\cap B_1$, and 
\item a set of two-phase points $\Gamma_{\text{\sc tp}}:=\partial\Omega_u^+\cap \partial\Omega_u^-\cap B_1$. 
\end{itemize}

By definition the set of one-phase points $\Gamma_{\text{\sc op}}^+$ is relatively open in $\partial\Omega_u^+$. Precisely, if $x_0\in \Gamma_{\text{\sc op}}^+$, then there is a ball $B_r(x_0)$ which does not contain points from the negative phase, $B_r(x_0)\cap\Omega_u^-=\emptyset$. Thus, $u$ is a minimizer of the one-phase functional $J_{\text{\sc op}}$ in $B_r(x_0)$ and the regularity of $\partial\Omega_u^+\cap B_r(x_0)$ follows from the results in \cite{altcaf,weiss}. 

For what concerns the two-phase points, we can  further  divide them into {\it interior} and {\it branching} points:
\begin{itemize} 
	\item we say that $x_0$ is an {\it interior} two-phase point, $x_0\in \Gamma_{\text{\sc tp}}^{\rm int}$, if $x_0\in \Gamma_{\text{\sc tp}}$ and 
	$$\big|B_r(x_0)\cap \{u=0\}\big|=0\quad\text{for some}\quad r>0\,;$$ 
	\item conversely, we say that $x_0$ is a {\it branching} point, $x_0\in \Gamma_{\text{\sc tp}}^{\rm br}$, if $x_0\in \Gamma_{\text{\sc tp}}$ and 
	$$\big|B_r(x_0)\cap \{u=0\}\big|>0\quad\text{for every}\quad r>0\,.$$ 
\end{itemize}
By definition, $\Gamma_{\text{\sc tp}}^{\rm  int}$ is an open subset of $\partial\Omega_u^+\cap B_1$. In particular, $u$ is a minimizer of the Alt-Caffarelli-Friedman functional \eqref{e:J3} with $\lambda_+=\lambda_0$ in a small ball $B_r(x_0)$ and the regularity of $\Gamma_{\text{\sc tp}}^{\text{\rm int}}$ is a consequence of the results in \cite{alcafr,caf1,caf2,caf3, cs, dfs, dfs2,dfs3}. 

In order to complete the study of the regularity of the free boundaries one has then to focus on the branching points. Note that by the previous discussion \(|\nabla u^+|\) is a H\"older continuous function on \(\Gamma_{\text{\sc op}}^+\cup\Gamma_{\text{\sc tp}}^{\rm int}\). By relying on the results of \cite{desilva}, to prove \cref{thm:main} one has to show that $|\nabla u^+|:\partial\Omega_u^+\cap B_1\to\R$ is H\"older continuous across the branching points
$$
\Gamma_{\text{\sc tp}}^{\text{\rm br}}=\left(\partial\Omega_u^+\cap B_1\right)\setminus \left(\Gamma_{\text{\sc op}}^+\cup\Gamma_{\text{\sc tp}}^{\rm int}\right).
$$ 
By following \cite{spve} and \cite{stv} this will be consequence of 
\begin{center}
\emph{ uniform ``flatness'' decay at the two-phase points $x_0\in \Gamma_{\text{\sc tp}}$,}
\end{center}	
which is the main result of our paper.

\subsection{Flatness decay at the two-phase points}\label{sub:intro:flatness}
By the Weiss' monotonicity formula (see \cite{weiss}), at every two-phase point \(x_0\in \Gamma_{\text{\sc tp}}\), the limits of  blow-up sequences
\[
u_{x_0,r_k}(y)=\frac{u(x_0+r_k y)}{r_k}
\]
are {\it two-plane solutions} of the form 
\begin{equation}\label{e:blowup}\tag{\textsc{TpS}}
\begin{aligned}
H_{\alpha,\bo{e}}(x)&=\alpha (x\cdot \bo{e})^+-\beta(x\cdot \bo{e})^-,
\\
&\text{with}\qquad \bo{e} \in \mathbb S^{d-1},\quad \alpha^2-\beta^2=\lambda_+^2-\lambda_-^2,\quad\text{and}\quad \alpha\ge \lambda_+,\quad\beta\ge \lambda_-.
\end{aligned}
\end{equation}
However, \emph{a priori} the limiting profile might depend on the chosen sequence. As it is usual in this type of problems, uniqueness of the blow-up profile (and thus regularity of \(u\)) is a consequence of a uniform flatness (or excess) decay.
 
Given \(u\), its flatness  in $B_r(x_0)$ with respect to $H=H_{\alpha,\bo{e}}$ is defined as 
$$\Flat_{B_r(x_0)}\,(u,H) =\frac1r\|u-H\|_{L^\infty(B_r(x_0))},$$
In particular, we can assume that the flatness  becomes small at a uniform scale in a neighborhood of any $x_0\in \Gamma_{\text{\sc tp}}$. Precisely, for every $\eps>0$ and  $x_0\in \Gamma_{\text{\sc tp}}$, there is $r>0$ and a neighborhood $\,\mathcal U$ of $x_0$, such that  
$$\Flat_{B_r(y_0)}\,(u,H) \le \eps\quad\text{for every}\quad y_0\in \mathcal U\cap \Gamma_{\text{\sc tp}}.$$
Our aim is to prove that there is a universal threshold $\eps >0$ such that
$$\Flat_{B_r(x_0)}\,(u,H) \le \eps\quad\text{for some two-plane solution}\quad H=H_{\alpha,\bo{e}},$$
then it improves in the ball $B_{\sfrac{r}2}(x_0)$, which means that there exists another two-plane solution $\widetilde H=H_{\tilde{\alpha},\tilde{\bo{e}}}$ such that
\begin{equation}\label{e:excess_decay_intro}
\Flat_{B_{\sfrac{r}2}(x_0)}\,(u,\widetilde H)\le 2^{-\gamma}\,\Flat_{B_r(x_0)}\,(u,H),
\end{equation}
for some small, but universal, \(\gamma>0\). 

In order to prove \eqref{e:excess_decay_intro},
 we argue by contradiction. That is, there is a sequence of  minimizers $u_k$ and a sequence of two-plane solutions \(H_k\), such that 
\[
\eps_k:=\norm{u_k-H_k}_{L^\infty(B_1)}\to 0 \qquad\text{but}\qquad\inf_{\widetilde H}\,\norm{u_k-\widetilde H}_{L^\infty(B_{\sfrac{1}{2}})}\ge 2^{-\gamma}\eps_k\,,
\]  
where the infimum is taken over all $\widetilde H$ of the form \eqref{e:blowup}.

\smallskip

Now, the two key points of the argument are to show that the sequence 
\begin{equation*}
v_k:=\frac{u_k-H_k}{\eps_k}
\end{equation*}
is (pre-)compact in a suitable topology and that any limit point \(v_\infty\) is a solution of a suitable ``linearized'' problem (that turns out to be a non-linear one); then the regularity theory for the limiting problem allows to obtain the desired contradiction. 
Let us briefly analyze these two main steps of the proof.

\smallskip

\noindent {\it The ``linearized'' problem}. The nature of the limiting problem depends on the type of free boundary point one is considering. At branching points (the ones that we are most interested in),  \(v_\infty\) turns out be the the solution of a  \emph{two-membrane problem}, \eqref{e:twomembrane}. 
 At interior two-phase points $\Gamma_{\text{\sc tp}}^{\rm int}$, we instead  recover a transmission problem as in \cite{dfs}.
Note that in the first case, the ``linearized'' problem is actually non-linear. Similar phenomena have been already observed in a number of related situation: in this same context, a derivation of the limiting problem was done in \cite{ASW},  while for  Bernoulli type problems a similar fact appears in studying regularity close to the boundary of the container, \cite{changsavin}. See also  \cite{FiSe,savinyu} for similar issues  in studying  the singular set of obstacle type problems. Heuristically linearizing to an ``obstacle'' type problem is due to the fact that there is a natural ``ordering'' between the negative and the positive phases of any possible competitor. Note instead if one linearizes the plain one phase problem, the natural linearized problem is the Neumann one, this was observed in \cite{AW} (in the parabolic case) and fully exploited in \cite{desilva}, see also \cite{CG,CGS} where other non-local type problems appear as linearization.

\smallskip

\noindent {\it Compactness of the linearizing sequence $v_k$}. 
We follow the approach introduced by  De~Silva in  \cite{desilva}, which is based on a  partial Harnack type inequality, introduced in different context by Savin in  \cite{savin1,savin2}. This  is a weaker form of the flatness decay estimate \eqref{e:excess_decay_intro} that does not take into account the scaling of the functional (which means that it cannot be used to obtain the regularity of the free boundary in a direct way). The rough idea is that if $\|u-H\|_{L^\infty(B_{r}(x_0))}$ falls below a certain (universal) threshold, then $u$ is closer to $H$ in the ball $B_{\sfrac{r}2}(x_0)$, precisely:
\begin{equation}\label{e:excess_decay_intro-2}
\|u-H\|_{L^\infty(B_{\sfrac{r}2}(x_0))}\le 2^{-\delta }\|u-H\|_{L^\infty(B_{r}(x_0))},
\end{equation}
for some $\delta>0$. This estimate implies the 
compactness  of the sequence $v_k$ by a classical (Ascoli-Arzel\`a type) argument. 

For local minimizers of the one-phase functional \eqref{e:J1} or the two-phase functional \eqref{e:J3} with coefficients satisfying the condition \eqref{e:acf_condition}, the functions $H$ can be chosen in the respective class of blow-up limits. In fact, for the one-phase problem, it is sufficient to take $H$ to be the (possibly translated and rotated) one-homogeneous global one-phase solution $H(x)=\lambda_+x_d^+$ (as in \cite{desilva}); for the two-phase problem in the case \eqref{e:acf_condition}, it is sufficient to take $H$ in the class of two-plane solutions \eqref{e:blowup}, precisely as in \cite{dfs}. However, in our case, it turns out that the class of two-plane solutions is not large enough. The reason  is that there  exist solutions which are arbitrarily  close to a two-plane solution of the form  \(H_{\lambda_+,\bo{e}_d}\) but which are not a smooth perturbation of it. 
For instance  the function, 
\begin{equation}\label{e:misonotagliatounditocazzo}
H(x)=\lambda_+(x_d+\eps_1)^+-\lambda_-(x_d-\eps_2)^-,
\end{equation}
 is \(\max\{\eps_1, \eps_2\}\)-close to the two-plane solution \(H_{\lambda_+,\bo{e}_d}\), but  \eqref{e:excess_decay_intro-2} fails for it.

This is not just a technical difficulty. In fact, in order to get the compactness of the linearizing sequence, the partial improvement of flatness \eqref{e:excess_decay_intro-2} is not needed just at one two-phase point $x_0$, but in all the points in a neighborhood of $x_0$.  
Now, since at a branching point, the behavior of the free boundary switches from two-phase (which roughly speaking corresponds to the case when the two free boundaries $\partial\Omega_u^+$ and  $\partial\Omega_u^-$ coincide) to one-phase (in which the two free boundaries $\partial\Omega_u^+$ and  $\partial\Omega_u^-$ are close to each other but separate, as on {\sc Figure 1} below), the class of reference functions $H$ has to contain both the two-plane solutions \eqref{e:blowup} and the solutions of the form \eqref{e:misonotagliatounditocazzo}.

\begin{center}
\includegraphics[scale=1]{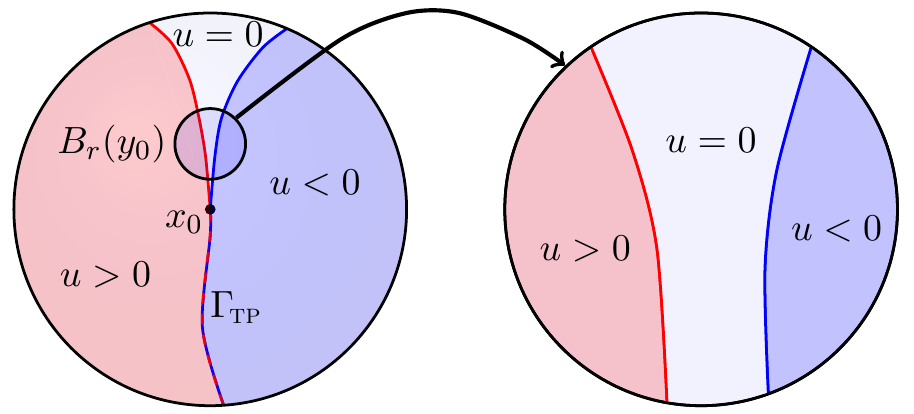}

{\sc Figure 1.} In a neighborhood of a branching point $x_0$ the solution\\ $u$ might get closer to the solution of the form \eqref{e:misonotagliatounditocazzo}.
\end{center}

\medskip

\subsection*{Structure of the paper} This paper is organized as follows: in \cref{sec:2} we recall some basic properties of minimizers  and we fix the notation; in \cref{sec:3} we  establish the excess decay lemma; in \cref{sec:4} we prove our main results; in \cref{s:app,app:twomembrane} we collect the proofs of some technical facts.

\subsection*{Acknowledgements} L.S. was partially supported by NSF grant DMS-1951070. B.V. has been partially supported by Agence Nationale de la Recherche (ANR) with the projects GeoSpec (LabEx PERSYVAL-Lab, ANR-11-LABX-0025-01) and CoMeDiC (ANR-15-CE40-0006). 

\medskip

At the final stage of the preparation of this work, the authors have been informed that two other groups are working on similar problems, namely in \cite{ASW} the authors aim to establish a result analogous to the ours via variational techniques, while in \cite{DETV} the goal is to prove the same result for almost minimizers in the spirit of \cite{dt,det,detv}.

\section{Basic properties of minimizers}\label{sec:2}

In this section we recall (mostly without proof) some basic properties of local minimizers of $J_{\text{\sc tp}}$. In particular, in \cref{sub:reg_min} we recall  Lipschitz-regularity and  non-degeneracy property  of  $u$;  \cref{sub:class_blow-up} is dedicated to the study of blow-up limits of $u$ at two-phase points and  in \cref{sub:viscosity} we show that $u$ satisfies an optimality condition in viscosity sense.

\subsection{Regularity of  minimizers}\label{sub:reg_min} Let $u$ be a local minimizer of $J_{\text{\sc tp}}$. Then, it is well-known that $u$ is locally Lipschitz continuous and  non-degenerate. 

Throughout this paper, we will assume that the weights in \eqref{e:J} are ordered as follows: 
\begin{equation}\label{e:ordine}
\lambda_+\ge \lambda_->0.
\end{equation}
Notice that this is not restrictive as one can always replace \(u\) by \(-u\) in $J_{\text{\sc tp}}$.

\begin{proposition}[Lipschitz regularity and non-degeneracy of local minimizers]\label{l:properties_of_min}
	Let $D\subset\R^d$ be an open set, $\lambda_+\geq \lambda_->0$, and $u$ be a local minimizer of $J_{\text{\sc tp}}$. Then the following properties hold:
	\begin{enumerate}[label=\textup{(\roman*)}]
		\item  {\rm Lipschitz continuity.} $u\in C^{0,1}_{\mathrm{loc}}(D)$.
		\item {\rm Non-degeneracy.} There is  constant $\alpha=\alpha(d, \lambda_\pm)>0$ such that 
		$$\displaystyle{\fint_{\de B_r(x_0)}u^\pm\geq \alpha\,r}\quad\text{for every $x_0\in \overline{\Omega_u^\pm}\cap D$ and every $0<r<\dist(x_0,\partial D)$}.$$
	\end{enumerate}
\end{proposition}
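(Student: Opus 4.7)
Both statements are by-now classical results of Alt--Caffarelli type, and the plan is to adapt the standard comparison arguments to the two-phase setting.

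For (i), I would first observe that testing minimality against interior variations supported in $\Omega_u^+$ or $\Omega_u^-$ forces $u$ to be harmonic on these open sets, while one-sided perturbations (subtracting a nonnegative test function from $u^\pm$) show that $u^+$ and $u^-$ are subharmonic as distributions on $D$. The key intermediate step is the Alt--Caffarelli $L^\infty$-bound
\begin{equation*}
\sup_{B_{r/2}(x_0)} u^+ \,\leq\, C(d,\lambda_+)\, r\qquad\text{for every } x_0 \in \partial\Omega_u^+,\ B_r(x_0)\Subset D,
\end{equation*}
obtained by comparing $u$ with its harmonic replacement $h$ in $B_r(x_0)$: minimality gives $\int_{B_r}|\nabla(u-h)|^2 \leq \lambda_+^2|B_r|$, and combining this $L^2$-bound with subharmonicity of $u^+$ (iterated on shrinking balls in the standard manner) yields the pointwise bound. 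The analogous bound holds for $u^-$. Lipschitz continuity at a point $x \in \Omega_u^+\cap D$ then follows from interior gradient estimates for harmonic functions: setting $\rho = \dist(x,\partial\Omega_u^+)$ and picking $y \in \partial\Omega_u^+$ at distance $\rho$ from $x$, the $L^\infty$ bound applied at $y$ gives $u(x) \leq C\rho$, and harmonicity in $B_\rho(x)$ then gives $|\nabla u(x)| \leq C u(x)/\rho \leq C$. The symmetric argument for $u^-$ completes the proof.

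For (ii), I would argue by contradiction. Suppose $x_0 \in \overline{\Omega_u^+}$, $B_r(x_0)\Subset D$, and $\fint_{\partial B_r(x_0)} u^+ < \alpha r$ for some small $\alpha > 0$ to be chosen. By subharmonicity, $u^+ \leq h$ in $B_r(x_0)$, where $h$ is the harmonic extension of $u^+|_{\partial B_r}$; Poisson-kernel estimates give $\|h\|_{L^\infty(B_{r/2}(x_0))} \leq C_d \alpha r$. Let $\phi$ be the radial harmonic barrier on the annulus $B_r\setminus\overline{B_{r/2}}$ with $\phi = h$ on $\partial B_r$ and $\phi = 0$ on $\partial B_{r/2}$, extended by $0$ inside $B_{r/2}(x_0)$. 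I would then take the competitor
\begin{equation*}
v := \min(u^+,\phi) - u^-,
\end{equation*}
which is admissible (the disjointness of the supports of $u^+$ and $u^-$ guarantees that the negative phase is untouched), equals $u$ on $\partial B_r$, and vanishes on $\{u>0\}\cap B_{r/2}(x_0)$. The inequality $J_{\text{\sc tp}}(u)\leq J_{\text{\sc tp}}(v)$ reduces, after cancellation of the $u^-$-terms, to
\begin{equation*}
\lambda_+^2 |\Omega_u^+ \cap B_{r/2}(x_0)| \,\leq\, \int_{B_r} |\nabla(u^+-\phi)_+|^2 \,\leq\, C\alpha^2 r^d,
\end{equation*}
where the second inequality uses the explicit form of $\phi$ and the bound $u^+\leq h\leq C_d\alpha r$ in $B_{r/2}$. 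A scale-iteration of this estimate (in the spirit of Caffarelli--Jerison--Kenig) forces $|\Omega_u^+ \cap B_{\beta r}(x_0)| = 0$ for suitable $\beta>0$ once $\alpha$ is small enough depending only on $(d,\lambda_+)$, contradicting $x_0 \in \overline{\Omega_u^+}$; the same argument applied to $-u$ gives the non-degeneracy of $u^-$.

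The main obstacle is the sharpness of the competitor estimate in (ii): one must arrange that the Dirichlet cost of lowering $u^+$ is quadratic in $\alpha$ (so that it can be beaten by the \emph{linear} measure saving), which is precisely what the uniform bound $u^+\leq h\leq C_d\alpha r$ in $B_{r/2}(x_0)$ provides. In the two-phase setting, a simplification is that the supports of $u^+$ and $u^-$ are disjoint, so the construction never needs to interact with the negative phase and reduces essentially to the classical one-phase argument.
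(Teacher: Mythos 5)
Your treatment of part (ii) is essentially correct and is exactly the route the paper indicates: the competitor $\min(u^+,\phi)-u^-$ performs a one-sided inward perturbation of the positive phase only, the $u^-$-terms cancel, and the argument reduces to the classical one-phase non-degeneracy estimate. (One cosmetic remark: the energy comparison does not literally produce $\int|\nabla(u^+-\phi)_+|^2$ but rather $\int_{\{u^+>\phi\}}\bigl(|\nabla\phi|^2-|\nabla u^+|^2\bigr)$, which after an integration by parts on the annulus, using $|\partial_\nu\phi|\le C\alpha$ on $\partial B_{r/2}$ and $u^+\le C\alpha r$ there, is indeed $\le C\alpha^2 r^d$; the subsequent decay iteration is standard.)

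Part (i), however, contains a genuine gap, and it is precisely the point the paper flags: in the two-phase setting Lipschitz continuity is \emph{not} a corollary of the one-phase harmonic-replacement argument. Your key step asserts that $\sup_{B_{r/2}(x_0)}u^+\le C(d,\lambda_+)\,r$ at $x_0\in\partial\Omega_u^+$ follows from the bound $\int_{B_r}|\nabla(u-h)|^2\le Cr^d$ together with subharmonicity of $u^+$. That implication is false. Consider
\[
u_N(x)=N\,x_d^+-\sqrt{N^2-\lambda_+^2+\lambda_-^2}\;x_d^-,\qquad N\gg1 .
\]
Here $u_N^+$ is subharmonic, $0\in\partial\Omega_{u_N}^+$, and $u_N$ differs from the harmonic function $\tfrac12\bigl(N+\sqrt{N^2-\lambda_+^2+\lambda_-^2}\bigr)x_d$ by $O(1/N)$ in gradient, so its harmonic replacement in any ball satisfies your $L^2$ estimate with room to spare; yet $\sup_{B_{r/2}}u_N^+=Nr/2$ is unbounded. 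The point is that near a two-phase point the two slopes can a priori be huge and nearly cancelling --- the first-order conditions only constrain $\alpha^2-\beta^2$ --- and neither the Dirichlet comparison with the harmonic replacement nor subharmonicity detects this. Ruling it out requires the Alt--Caffarelli--Friedman monotonicity formula: the non-degeneracy of \emph{both} phases bounds each factor of the ACF product from below near two-phase points, and the monotonicity of the product then bounds the averaged Dirichlet energy of $u^+$ from above, which is what yields the linear growth. In particular the logical order is the reverse of yours: (ii) must be established first and is an essential input to (i). This is the content of \cite[Theorem 5.3]{alcafr}, to which the paper defers.
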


\begin{proof}
The second claim was first proved in  \cite[Theorem 3.1]{alcafr} and depends only on the fact that each of the two phases $\Omega_u^+$ and $\Omega_u^-$ is optimal with respect to one-sided inwards perturbations (see for instance \cite{bucve} and \cite[Section 4]{notes}). The Lipschitz continuity of $u$ is more involved and requires the use of the Alt-Caffarelli-Friedman monotonicity formula and the non-degeneracy of $u^+$ and $u^-$. It was first proved in \cite[Theorem 5.3]{alcafr}, see also the recent paper \cite{detv} for quasi-minimizers. 
\end{proof}

\subsection{Blow-up sequences and blow-up limits}\label{sub:class_blow-up}
Let $u$ be a local minimizer of $J_{\text{\sc tp}}$ in the open set $D\subset\R^d$. For every $x_0\in \partial\Omega_u\cap D$  and every$0<r<\dist(x_0,\de D)$, we consider the function 
$$u_{x_0,r}(x):=\frac{u(x_0+rx)}{r},$$
which is well-defined for $|x|<\frac1r \dist(x_0,\de D)$ and vanishes at the origin. Give  a sequence    $r_k>0$ such that 
$r_k \to 0$, we say that the sequence of functions $u_{x_0,r_k}$ is a \emph{blow-up sequence}. Note that, for every $R>0$, and $k\gg 1$, the functions $u_{x_0,r_k}$ are defined on the ball $B_R$, vanish at  zero and are uniformly Lipschitz  in $B_R$. Hence, there is a Lipschitz continuous function $v:\R^d\to\R$ and a (no relabeled) subsequence of $u_{x_0,r_k}$  such that $u_{x_0,r_k}$ converges to $v$ uniformly on every ball $B_R\subset \R^d$. We say that $v$ is a blow-up limit of $u$ at $x_0$. Notice that $v$ might depend not only on $x_0$ and $u$ but also on the (sub-)sequence $r_k$. We will denote by $\mathcal{BU}(x_0)$  the collection of all possible blow-up limits of $u$ at $x_0$.

The following lemma  classifies all the possible elements of  $\mathcal{BU}(x_0)$ when  $x_0 \in \Gamma_{\text{\sc tp}}$.The result is well-known and we only sketch the proof for the sake of completeness.
\begin{lemma}[Classification of the blow-up limits]\label{l:class_blow-up_limits}
Let $u$ be a local minimizer of $J_{\text{\sc tp}}$ in the open set $D\subset\R^d$, and let $v$ be a blow-up limit of $u$ at the two-phase point $x_0\in \Gamma_{\text{\sc tp}}$. 
Then, $v$ is of the form 
		\begin{equation*}\label{e:class_blow-up_form}
		v(x)=H_{\alpha,\bo{e}}(x)=\alpha (x\cdot \bo{e})^+-\beta(x\cdot \bo{e})^-, 
		\end{equation*}
		where \(\bo{e} \in \mathbb S^{d-1}\), and $\alpha,\beta$ are such that
		\begin{equation*}\label{e:class_blow-up_constants}
		\ \alpha^2-\beta^2=\lambda_+^2-\lambda_-^2\qquad\text{and}\qquad \alpha\ge \lambda_+,\quad\beta\ge \lambda_-.
		\end{equation*}
\end{lemma}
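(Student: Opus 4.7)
The proof follows the standard scheme for free boundary problems via monotonicity formulas: upgrade $v$ to a global minimizer, invoke Weiss' monotonicity formula to force 1-homogeneity, and then invoke the Alt--Caffarelli--Friedman (ACF) monotonicity formula (together with its rigidity) to force the two-plane structure.

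First, I would show that $v$ is a local minimizer of $J_{\text{\sc tp}}$ on all of $\R^d$. The rescalings $u_{x_0, r_k}$ are local minimizers on the balls $B_{R/r_k}$ for every fixed $R > 0$, and by \cref{l:properties_of_min} they are uniformly Lipschitz and non-degenerate. Up to a subsequence they converge locally uniformly to $v$; minimality passes to the limit thanks to $L^1_{\text{loc}}$ convergence of the characteristic functions $\ind_{\Omega^\pm_{u_{x_0, r_k}}}$ (a consequence of non-degeneracy and Hausdorff convergence of the free boundaries), via a standard boundary-gluing construction of competitors. Since $x_0 \in \Gamma_{\text{\sc tp}}$, the non-degeneracy at points approaching $x_0$ in each phase gives $0 \in \overline{\Omega_v^+} \cap \overline{\Omega_v^-}$, so both phases of $v$ are non-trivial at the origin.

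Next, Weiss' monotonicity formula forces $v$ to be 1-homogeneous. The functional $W(r, u, x_0) = r^{-d} J_{\text{\sc tp}}(u, B_r(x_0)) - r^{-d-1} \int_{\partial B_r(x_0)} u^2 \, d\HH^{d-1}$ is non-decreasing in $r$ and constant if and only if $u$ is 1-homogeneous about $x_0$. By the scale-invariance $W(s, u_{x_0, r_k}, 0) = W(s\, r_k, u, x_0)$, which converges to $W(0^+, u, x_0)$ for every fixed $s > 0$. Hence $W(\cdot, v, 0)$ is constant, and the equality case of Weiss' formula yields the 1-homogeneity of $v$. I would then apply the ACF monotonicity formula to the two non-trivial phases $v^\pm$: the quantity $\Phi(r) = \prod_{\pm}\, r^{-2}\!\int_{B_r}\!|\nabla v^\pm|^2 |x|^{2-d}\, dx$ is non-decreasing and finite, and the homogeneity of $v$ forces $\Phi$ to be constant. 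The rigidity case of ACF then implies that each $v^\pm$ is a linear function restricted to a half-space, and that the two supporting half-spaces are complementary and meet at a hyperplane through the origin; after rotation, $v(x) = \alpha (x \cdot \bo{e})^+ - \beta (x \cdot \bo{e})^-$ for some $\alpha, \beta \geq 0$.

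Finally, the transmission equation $\alpha^2 - \beta^2 = \lambda_+^2 - \lambda_-^2$ follows from inner (domain) variations of $v$, which on the two-plane solution reduce to a boundary identity on the interface $\{x \cdot \bo{e} = 0\}$. The inequalities $\alpha \geq \lambda_+$ and $\beta \geq \lambda_-$ (equivalent to each other via the equation) come from the supersolution property of each phase: for any non-negative $w$ with $w \leq v^+$ and $w = v^+$ on $\partial B_r$, the function $\tilde u = w - v^-$ is an admissible $J_{\text{\sc tp}}$-competitor for $v$, and comparing energies yields $J_{\text{\sc op}}(w, B_r) \geq J_{\text{\sc op}}(v^+, B_r)$; testing with $w = (v^+ - t\eta)^+$ for a non-negative cutoff $\eta$ and expanding to first order in $t$ produces the inequality $\alpha^2 \geq \lambda_+^2$. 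I expect the main subtlety to lie in the first step — transferring minimality to the blow-up limit — since this requires not only $L^\infty$ convergence of the functions but also $L^1_{\text{loc}}$ convergence of the phases, a passage that relies crucially on the non-degeneracy of both $u^+$ and $u^-$. The remaining steps are fairly mechanical applications of the Weiss and ACF monotonicity formulas together with their classical rigidity cases.
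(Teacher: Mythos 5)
Your proposal is correct, and it coincides with the paper's proof in the first two steps (passing minimality to the blow-up limit, with non-degeneracy guaranteeing that both phases of $v$ are non-trivial, and using Weiss' monotonicity formula to get $1$-homogeneity) and in the last step (the identity $\alpha^2-\beta^2=\lambda_+^2-\lambda_-^2$ from inner variations, and the bounds $\alpha\ge\lambda_+$, $\beta\ge\lambda_-$ from one-sided perturbations that only retract one phase -- the paper moves the negative phase inward via $v^+(x)-v^-(x+t\xi(x))$, you retract the positive phase via the truncation competitor $(v^+-t\eta)^+$; these are the same one-sided minimality argument and, as you note, the two bounds are equivalent once the transmission identity is known). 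Where you genuinely diverge is the core classification step: the paper writes $v=\rho V(\theta)$, chooses $c>0$ so that $V^+-cV^-$ has zero mean on $\mathbb S^{d-1}$, and observes that its Rayleigh quotient equals $d-1$, so that $V^+-cV^-$ lies in the first non-trivial eigenspace of the spherical Laplacian and is therefore linear; you instead invoke the Alt--Caffarelli--Friedman monotonicity formula, noting that $1$-homogeneity makes $\Phi$ constant (and positive, since both phases are non-trivial), and then use the rigidity/equality case to force complementary half-space supports and linearity. Your route is the classical one, going back to Alt--Caffarelli--Friedman's own classification of blow-ups at two-phase points, and it is perfectly valid, but it requires two additional (standard) inputs that you should at least record: the subharmonicity of $v^\pm$ across the free boundary (needed to apply ACF at all, and true for minimizers), and the equality case of ACF, which ultimately rests on the Friedland--Hayman equality case (complementary half-spheres). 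The paper's spherical eigenvalue argument buys a more self-contained proof that avoids ACF and its rigidity altogether, using only harmonicity in each phase, homogeneity, and the variational characterization of the eigenvalue $d-1$ on $\mathbb S^{d-1}$; your version buys the comfort of quoting a classical theorem wholesale.
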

\begin{proof}
Let $v$ be a blow-up limit of $u$ at $x_0$ and let $u_{x_0,r_k}$ be a blow-up sequence converging to $v$ (locally uniformly in $\R^d$).  First, notice that the non-degeneracy of $u$, \cref{l:properties_of_min} (ii), implies that $v$ is non trivial and changes sign: $v^+\not\equiv 0$ and $v^-\not\equiv 0$. Moreover, since every $u_{x_0,r_k}$ is a local minimizer of $J_{\text{\sc tp}}$ ( it is standard to infer that $v$ is also a local minimizer of $J_{\text{\sc tp}}$ in $\R^d$ (see for instance \cite[Section 6]{notes}). Thus, $v$ is harmonic on $\Omega_v^+$ and $\Omega_v^-$.  On the other hand, by the Weiss monotonicity formula, \cite{weiss}, $v$ is one-homogeneous, in polar coordinates:
\[
v(\rho, \theta)=\rho V(\theta)
\] 
In particular  $V$ is an eigenfunction of the spherical Laplacian $\Delta_{\mathbb S}$ on the spherical sets $\Omega_v^\pm\cap \mathbb S^{d-1}$:
\begin{equation}\label{e:spherical_equation}
-\Delta_{\mathbb S}V^\pm=(d-1)V^\pm\quad\text{in}\quad \Omega_v^\pm\cap \mathbb S^{d-1}.
\end{equation}
We now choose  $c>0$ such that 
\[
 \int_{\mathbb S^{d-1}}(V^+-cV^-)d\HH^{d-1}=0.
 \]
  Using the \eqref{e:spherical_equation} and integrating by parts, we get that 
$$
\int_{\mathbb S^{d-1}}|\nabla_\theta (V^+-cV^-)|^2\,d\HH^{d-1}=(d-1)\int_{\partial B_1}|V^+-cV^-|^2\,d\HH^{d-1},
$$
This means that $V^+-cV^-$ is an eigenfunction of the spherical Laplacian on  $\mathbb S^{d-1}$, corresponding to the eigenvalue $(d-1)$. Since  the $(d-1)$-eigenspace contains only linear functions one easily deduce that  $v$ is of the form \eqref{e:blowup}. 

Conditions \eqref{e:class_blow-up_constants} can be obtained by a smooth variation of the free boundary $\{v=0\}$. Indeed, if considering  competitors of the form $v_t(x)=v(x+t\xi(x))$ for smooth  compactly vector fields $\xi$, and taking the derivative of $J_{\text{\sc tp}}(v_t,B_1)$ at $t\to0$, we get that 
$$\int_{\{v=0\}\cap B_1}(\bo{e}\cdot \xi)\Big(|\nabla v^+|^2-|\nabla v^-|^2-\big(\lambda_+^2-\lambda_-^2\big)\Big)\,d\HH^{d-1}=0,$$
which by the arbitrariness of \(\xi\)  is precisely the first part of \eqref{e:class_blow-up_constants}. The second part of \eqref{e:class_blow-up_constants} is analogous and follows by considering competitors of the form $v_t(x)=v^+(x)-v^-(x+t\xi(x))$ for vector fields  with $\xi\cdot \bo{e}\le 0$  so that it  moves  negative phase only inwards, that is, $\{v_t<0\}\subset \{v<0\}$. Taking the derivative of the energy at $t>0$, we get 
$$\int_{\{v=0\}\cap B_1}(\xi\cdot \bo{e})\big(|\nabla v^-|^2-\lambda_-^2\big)\,d\HH^{d-1}\le 0,$$ 
which gives $\beta\ge\lambda_-$. The estimate on $\alpha$ is analogous.  
\end{proof}	

We record the following consequence of  \cref{l:class_blow-up_limits} which says that the ``flatness'' can be chosen uniformly small in a neighborhood of a two-phase point. 

\begin{corollary}\label{c:flatness}
Let $u$ be a local minimizer of $J_{\text{\sc tp}}$ in the open set $D\subset\R^d$, and let $x_0$ be a two-phase point $x_0\in \Gamma_{\text{\sc tp}}$. Then, for every $\eps>0$ there are $r>0$ and $\rho>0$, and a function $H_{\alpha, \bo{e}}$ of the form \eqref{e:blowup} such that: 
$$\|u_{y_0,r}-H_{\alpha, \bo{e}}\|_{L^\infty(B_1)}\le \eps\quad\text{for every}\quad y_0\in B_\rho(x_0).$$ 
\end{corollary}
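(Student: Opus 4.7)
My plan is to combine the existence of a blow-up limit at $x_0$, provided by \cref{l:class_blow-up_limits}, with the local Lipschitz continuity of $u$ from \cref{l:properties_of_min}(i). The underlying idea is that a single two-plane solution $H=H_{\alpha,\bo{e}}$ produced by a blow-up at $x_0$ will work uniformly for all $y_0$ sufficiently close to $x_0$, essentially because translations of the basepoint by $\rho$ perturb $u_{y_0,r}$ only by $O(\rho/r)$ in $L^\infty(B_1)$.

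More precisely, since $x_0\in\Gamma_{\text{\sc tp}}$, \cref{l:class_blow-up_limits} yields a sequence $r_k\downarrow 0$ and a function $H=H_{\alpha,\bo{e}}$ of the form \eqref{e:blowup} such that $u_{x_0,r_k}\to H$ uniformly on $B_1$. Given $\varepsilon>0$, I fix an index $k$ large enough that
\[
\norm{u_{x_0,r_k}-H}_{L^\infty(B_1)}\le \frac{\varepsilon}{2},
\]
and set $r:=r_k$. Next, pick any $r_0>0$ with $\overline{B_{2r_0}(x_0)}\subset D$ and denote by $L$ the Lipschitz constant of $u$ on $B_{2r_0}(x_0)$, which exists by \cref{l:properties_of_min}(i). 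For any $y_0\in B_\rho(x_0)$ with $\rho\le r_0$ and $r\le r_0$, and any $x\in B_1$, the points $y_0+rx$ and $x_0+rx$ both lie in $B_{2r_0}(x_0)$, so
\[
\abs{u_{y_0,r}(x)-u_{x_0,r}(x)}=\frac{\abs{u(y_0+rx)-u(x_0+rx)}}{r}\le \frac{L\abs{y_0-x_0}}{r}\le \frac{L\rho}{r}.
\]
Choosing $\rho:=\min\bigl\{r_0,\,\varepsilon r/(2L)\bigr\}$ and applying the triangle inequality then gives $\norm{u_{y_0,r}-H}_{L^\infty(B_1)}\le \varepsilon$ for every $y_0\in B_\rho(x_0)$, which is the desired conclusion.

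I do not foresee any genuine obstacle: the statement is purely qualitative and relies only on continuity-type estimates, with no need for any compactness or PDE input beyond what is already provided by \cref{l:class_blow-up_limits} and \cref{l:properties_of_min}(i). The one mildly delicate point is the order of quantifiers — $r$ must be fixed first, after which $\rho$ is chosen depending on $r$ and $L$ — but this is transparent from the computation above.
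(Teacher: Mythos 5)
Your argument is correct and coincides with the paper's own proof: both extract a scale $r$ and a two-plane solution $H$ from the blow-up classification so that $\|u_{x_0,r}-H\|_{L^\infty(B_1)}\le \sfrac{\eps}{2}$, and then use the local Lipschitz bound $\|u_{y_0,r}-u_{x_0,r}\|_{L^\infty(B_1)}\le L|x_0-y_0|/r$ to choose $\rho$ with $L\rho/r\le\sfrac{\eps}{2}$. Your extra care about the domain on which the Lipschitz constant is taken and about the order of quantifiers is a harmless refinement of the same argument.
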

	
\begin{proof}
By \cref{l:class_blow-up_limits}, there exists $r>0$ and $H$ such that $\|u_{x_0,r}-H\|_{L^\infty(B_1)}\le \sfrac{\eps}2$. On the other hand, by the Lipschitz continuity of $u$ 
$$\|u_{x_0,r}-u_{y_0,r}\|_{L^\infty(B_1)}\le \frac{L}{r}|x_0-y_0|.$$ 
Choosing $\rho$ small enough (such that $\frac{L\rho}{r}\le \sfrac\eps2$), we get the claim.
\end{proof}

\subsection{Optimality conditions at the free boundary}\label{sub:viscosity}
Let $u:D\to \R$ be a local minimizer of $J_{\text{\sc tp}}$. In this section, we will show that $u$ satisfies the following optimality conditions at two-phase free boundary points:
\begin{equation}\label{e:tp_viscosity}
 |\nabla u^+|^2-|\nabla u^-|^2=\lambda_+^2-\lambda_-^2\qquad\text{and}\qquad  |\nabla u^\pm|\ge \lambda_\pm \qquad\text{on}\qquad  \Gamma_{\text{\sc tp}}.
\end{equation}
We notice that if $u$ was differentiable at $x_0\in\Gamma_{\text{\sc tp}}$, that is,
\begin{equation}\label{e:diff_tp}
\begin{cases}
u^+(x)=(x-x_0)\cdot\nabla u^+(x_0)+o(|x-x_0|)\quad\text{for every}\quad x\in\Omega_u^+,\\
u^-(x)=(x-x_0)\cdot\nabla u^-(x_0)+o(|x-x_0|)\quad\text{for every}\quad x\in\Omega_u^-,
\end{cases}
\end{equation}
then \eqref{e:tp_viscosity} would be an immediate consequence of \cref{l:class_blow-up_limits}. Of course,  differentiability of $u^+$ and $u^-$ (and the uniqueness of the blow-up limits\footnote{It is immediate to check that, if the blow-up is unique at $x_0\in\Gamma_{\text{\sc tp}}$, that is, 
$$\displaystyle\lim_{r\to0}\|u_{x_0,r}-H\|_{L^\infty(B_1)}=0\qquad\text{for some $H$ as in \eqref{e:blowup}},$$ 
then $\alpha=|\nabla u^+|(x_0)$, $\beta=|\nabla u^-|(x_0)$ and \eqref{e:diff_tp} does hold.}) is not a priori known, so we will use the optimality condition in some weak (viscosity) sense, based on comparison with (more regular) test functions. 

\begin{definition} Let $D$ be an open set.
	\begin{enumerate}[label=\textup{(\roman*)}]
		\item  We say that a function $Q:D\to\R$ \emph{touches} a function \(w:D\to\R\) from below (resp. from above) at a point \(x_0\in D\) if $Q(x_0)=w(x_0)$ and  
		$$Q(x)-w(x)\le 0 \quad \bigl( \text{resp. }\, Q(x)-w(x)\ge 0 \bigr),$$
		for every $x$ in a neighborhood of $x_0$. We will say that $Q$ touches $w$ strictly from below (resp. above), if the above inequalities are strict for \(x\neq x_0\). 
		
		\item A function $Q$ is an admissible \emph{comparison function} in $D$ if 
		\begin{enumerate}[label=\textup{(\alph*)}]
			\item $Q\in C^1\big(\overline{\{Q>0\}}\cap D\big)\cap C^1\big(\overline{\{Q<0\}}\cap D\big)$;
			\item $Q\in C^2\big(\{Q>0\}\cap D\big)\cap C^2\big(\{Q<0\}\cap D\big)$;
			\item $\partial \{Q>0\}$ and $\partial \{Q<0\}$ are smooth manifolds in $D$. 
		\end{enumerate}
	\end{enumerate}
\end{definition}

The optimality conditions on $u$ are given in the next lemma. Before we give the precise statement, we recall that $\partial\Omega_u\cap D=\Gamma_{\text{\sc op}}^+\cup \Gamma_{\text{\sc op}}^-\cup \Gamma_{\text{\sc tp}},$ where 
$$\Gamma_{\text{\sc op}}^+:=\partial\Omega_u^+\setminus\partial\Omega_u^-\cap D,\quad \Gamma_{\text{\sc op}}^-:=\partial\Omega_u^-\setminus\partial\Omega_u^+\cap D\quad\text{and}\quad \Gamma_{\text{\sc tp}}:=\partial\Omega_u^-\cap\partial\Omega_u^+\cap D.$$
\begin{lemma}[The local minimizers are viscosity solutions]\label{l:optimality}
Let $u$ be a local minimizer of $J_{\text{\sc tp}}$ in the open set $D\subset \R^d$. Then, $u$ in harmonic in $\Omega_u^+\cup\Omega_u^-$ and satisfies the following optimality conditions on the free boundary $\partial\Omega_u\cap D$.
\begin{enumerate}[label=\textup{(\Alph*)}]
\item Suppose that $Q$ is a comparison function that touches $u$ from \underline{below}  at $x_0$.
\begin{enumerate}[label=\textup{(A.\arabic*)}]
\item If $x_0\in \Gamma_{\text{\sc op}}^+$, then $\abs{\nabla Q^+(x_0)} \le \lambda_+ $;
\item if $x_0\in \Gamma_{\text{\sc op}}^-$, then $Q^+\equiv 0$ in a neighborhood of $x_0$ and $\abs{\nabla Q^-(x_0)} \ge \lambda_-$;
\item if $x_0\in \Gamma_{\text{\sc tp}}$, then $\abs{\nabla Q^-(x_0)} \ge \lambda_- $ and 
$$\abs{\nabla Q^+(x_0)}^2-\abs{\nabla Q^-(x_0)}^2\le \lambda_+^2-\lambda_-^2.$$
\end{enumerate}
\item Suppose that $Q$ is a comparison function that touches $u$ from \underline{above}  at $x_0$.
\begin{enumerate}[label=\textup{(B.\arabic*)}]
	\item If $x_0\in \Gamma_{\text{\sc op}}^+$, then  $Q^-\equiv 0$ in a neighborhood of $x_0$ and $\abs{\nabla Q^+(x_0)} \ge \lambda_+$;
	\item if $x_0\in \Gamma_{\text{\sc op}}^-$, then $\abs{\nabla Q^-(x_0)} \le \lambda_-$;
	\item if $x_0\in \Gamma_{\text{\sc tp}}$, then $\abs{\nabla Q^+(x_0)} \ge \lambda_+$ and 
	$$\abs{\nabla Q^+(x_0)}^2-\abs{\nabla Q^-(x_0)}^2\ge \lambda_+^2-\lambda_-^2.$$
\end{enumerate}

\end{enumerate}
\end{lemma}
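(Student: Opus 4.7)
The plan is to dispatch harmonicity and the one-phase conditions quickly and to focus on the two-phase conditions (A.3) and (B.3), which contain the novel content.

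Harmonicity of $u$ in $\Omega_u^\pm$ follows from inner variations with test functions compactly supported in each phase: the phase sets (and hence the measure terms) are unchanged under such perturbations, so the Euler--Lagrange equation $\Delta u = 0$ drops out. For the one-phase cases (A.1), (A.2), (B.1), (B.2), in a neighborhood of a one-phase point $x_0 \in \Gamma_{\text{\sc op}}^\pm$ the function $u$ (resp.\ $-u$) is a local minimizer of the one-phase functional $J_{\text{\sc op}}$, so these are the classical Bernoulli viscosity conditions from \cite{altcaf} (see also \cite{cs}).

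For (A.3), I fix $x_0 \in \Gamma_{\text{\sc tp}}$ and suppose $Q$ touches $u$ from below at $x_0$. The plan is to blow up both functions: extract $r_k \to 0$ along which $u_{x_0, r_k}$ converges locally uniformly to some $v \in \mathcal{BU}(x_0)$. By \cref{l:class_blow-up_limits}, $v(y) = \alpha (y \cdot \bo{e})^+ - \beta (y \cdot \bo{e})^-$ with $\alpha^2 - \beta^2 = \lambda_+^2 - \lambda_-^2$, $\alpha \ge \lambda_+$, $\beta \ge \lambda_-$. Using the $C^1$ regularity of $Q$ up to each phase and the smoothness of $\partial\{Q>0\}$, $\partial\{Q<0\}$ at $x_0$, the rescaling $Q_{x_0, r_k}(y) := Q(x_0 + r_k y)/r_k$ converges to a piecewise-linear one-homogeneous limit of the form
$$Q_\infty(y) = |\nabla Q^+(x_0)|\, (\bo{n}^+ \cdot y)^+ - |\nabla Q^-(x_0)|\, (\bo{n}^- \cdot y)^+,$$
where $\bo{n}^\pm$ are inward unit normals to $\{\pm Q > 0\}$ at $x_0$ (each term absent if $x_0$ is not on the corresponding boundary). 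The touching inequality $Q \le u$ passes to the limit as $Q_\infty \le v$.

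From this piecewise-linear inequality I would read off the conclusion. Since $Q \le u$ and $x_0 \in \partial \Omega_u^-$, the inclusion $\{u < 0\} \subset \{Q < 0\}$ forces $x_0 \in \partial\{Q<0\}$; matching half-spaces in $Q_\infty \le v$ on the side $\{y \cdot \bo{e} < 0\}$ gives $\bo{n}^- = -\bo{e}$ and $|\nabla Q^-(x_0)| \ge \beta \ge \lambda_-$. If additionally $x_0 \in \partial\{Q > 0\}$, then $\{Q > 0\} \subset \{u > 0\}$ forces $\bo{n}^+ = \bo{e}$ and $|\nabla Q^+(x_0)| \le \alpha$; otherwise $|\nabla Q^+(x_0)| = 0$. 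In either case, $|\nabla Q^+(x_0)|^2 - |\nabla Q^-(x_0)|^2 \le \alpha^2 - \beta^2 = \lambda_+^2 - \lambda_-^2$, completing (A.3). Statement (B.3) follows by the symmetric argument with $v \le Q_\infty$ in place of $Q_\infty \le v$. The main subtle point is the step that forces $\bo{n}^\pm = \pm\bo{e}$: one must rule out tilted blow-up interfaces for $Q$ by exploiting that on the side where $v$ is negative the piecewise-linear inequality would fail unless $\bo{n}^-$ is antialigned with $\bo{e}$, and analogously on the positive side. The argument ultimately rests on the classification \cref{l:class_blow-up_limits}, which pins down a single direction $\bo{e}$ for both half-spaces of $v$.
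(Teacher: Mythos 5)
Your proposal is correct and follows essentially the same route as the paper: blow up both $u$ and $Q$ at $x_0$, use \cref{l:class_blow-up_limits} to identify the limit of $u_{x_0,r_k}$ as a two-plane solution $H_{\alpha,\bo{e}}$, use the $C^1$ regularity of $Q^\pm$ to identify the limit of $Q_{x_0,r_k}$ as a piecewise-linear function, pass the ordering to the limit, and compare slopes. Your treatment is in fact slightly more careful than the paper's in spelling out why the normals of the two phases of $Q$ must align with $\bo{e}$ and in covering the degenerate case $x_0\notin\partial\{Q>0\}$, but the underlying argument is the same.
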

\begin{proof}
If $x_0$ is a one-phase point, then the gradient bounds in (A.1), (A.2), (B.1) and (B.2) follow by \cite[Proposition 7.1]{notes}, the claims $Q^+\equiv0$ in (A.2) and $Q^-\equiv 0$ in (B.2) being trivially true. Suppose now that $x_0\in \Gamma_{\text{\sc tp}}$ and that $Q$ touches $u$ from below at $x_0$. Let $u_{x_0,r_k}$ and $Q_{x_0,r_k}$ be blow-up sequences of $u$ and $Q$ at $x_0$. Then, up to extracting a subsequence, we can assume that  $u_{x_0,r_k}$ converges uniformly to a blow-up limit $H_u\in \mathcal {BU}(x_0)$ of the form 
$$H_u(x)=\alpha(x\cdot \bo{e})_+-\beta (x\cdot \bo{e})_-.$$
On the other hand, since $Q^+$ and $Q^-$ are differentiable at $x_0$ (respectively in $\overline\Omega_Q^+$ and $\overline\Omega_Q^-$), we get that 
$Q_{x_0,r_k}$ converges to the function
$$H_Q(x)=|\nabla Q^+(x_0)|(x\cdot \bo{e}')_+-|\nabla Q^-(x_0)| (x\cdot \bo{e}')_-,$$
where $\bo{e}'=|\nabla Q^+(x_0)|^{-1}\nabla Q^+(x_0)=-|\nabla Q^-(x_0)|^{-1}\nabla Q^-(x_0)$. Now since, $H_Q$ touches $H_u$ from below (and since $\alpha\neq0$ and $\beta\neq0$), we have that $\bo{e}'=\bo{e}$ and 
$$\abs{\nabla Q^+(x_0)}^2-\abs{\nabla Q^-(x_0)}^2\le\alpha^2-\beta^2\qquad\text{and}\qquad \abs{\nabla Q^+(x_0)} \le \alpha,\quad\abs{\nabla Q^-(x_0)} \ge \beta.$$
Combined with \eqref{e:class_blow-up_constants}, this gives (A.3). The proof of (B.3) is analogous.
\end{proof}

In particular, if $u:D\to\R$ is a continuous function such that the claims (A) and (B) hold for every comparison function $Q$,
 then we say that $u$ satisfies the following overdetermined condition on the free boundary in viscosity sense: 
 \begin{equation}\label{e:viscosity_solution_def}
 \begin{cases}
 |\nabla u^+|^2-|\nabla u^-|^2=\lambda_+^2-\lambda_-^2 ,\,
|\nabla u^+|\ge \lambda_+\,\text{and}\, |\nabla u^-|\ge \lambda_- &\text{on}\quad \Omega_u^+\cap\Omega_u^-\cap D\,;\\
  |\nabla u^+|=\lambda_+&\text{on}\quad  D\cap \Omega_u^+\setminus\Omega_u^-\,; \\
   |\nabla u^-|=\lambda_-&\text{on}\quad  D\cap \Omega_u^-\setminus\Omega_u^+\,.
   
   \end{cases}
 \end{equation}
 Thus, \cref{l:optimality} can be restated as follows: {\it If $u$ is a local minimizer of $J_{\text{\sc tp}}$ in $D$, then it satisfies \eqref{e:viscosity_solution_def} in viscosity sense.}
 
We conclude this section by recording the following  straightforward  consequence of definition of viscosity solution, where we consider what happens when a function is touching only  one of the two phases (note that in the second item we are restricting the touching points only to the one-phase free boundaries)

\begin{lemma}\label{l:NY}
Let \(u: D \to \R\) be a continuous  function which satisfies \eqref{e:viscosity_solution_def}. 

\begin{enumerate}[label=\textup{(\roman*)}]

\item Assume that \(Q\) is a comparison function touching \(u^+\) from above at \(x_0\in \partial \Omega_{u}^+\) (resp. \(-u^-\) from below at \(x_0\in \partial \Omega_{u}^-\)) , then 
\[
|\nabla Q^+|(x_0)\ge \lambda_+ \qquad \Bigl(\text{resp.\quad }   |\nabla Q^-|(x_0)\ge \lambda_-\Bigr).
\] 

\item Assume that \(Q\) is a comparison function touching \(u^+\) from below  at \(x_0\in \Gamma_{\textsc{op}}^+\) (resp. \(-u^-\) from above  at \(x_0\in \Gamma_{\textsc{op}}^-\)) , then 
\[
|\nabla Q^+|(x_0)\le \lambda_+ \qquad \Bigl(\text{resp.\quad}   |\nabla Q^-|(x_0)\le \lambda_-\Bigr).
\] 
\end{enumerate}
\end{lemma}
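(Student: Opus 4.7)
The plan is to reduce each claim to a direct application of an appropriate item of \cref{l:optimality} (restated in viscosity form as \eqref{e:viscosity_solution_def}). The key observation is that a one-sided touching of $u^+$ or $-u^-$ can, in the right circumstances, be upgraded to a two-sided touching of $u$ itself, after which the standard viscosity optimality conditions apply unchanged.

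\smallskip
\emph{Part (i).} Suppose $Q$ touches $u^+$ from above at $x_0 \in \partial\Omega_u^+$. Continuity of $u$ gives $u^+(x_0)=0$, so $Q(x_0)=0$ and $Q\ge u^+\ge 0$ near $x_0$. Since the pointwise inequality $u\le u^+$ always holds, we conclude $u\le Q$ near $x_0$, i.e.~$Q$ touches $u$ from above at $x_0$. Depending on whether $x_0\in \Gamma_{\textsc{op}}^+$ or $x_0\in \Gamma_{\textsc{tp}}$, I would apply item (B.1) or item (B.3) of \cref{l:optimality}, each of which yields $|\nabla Q^+|(x_0)\ge \lambda_+$. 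The bracketed statement about $-u^-$ is handled symmetrically: if $Q$ touches $-u^-$ from below at $x_0\in \partial\Omega_u^-$, then $Q(x_0)=0$ and $Q\le -u^-\le 0$ in a neighborhood, and the identity $u=u^++(-u^-)\ge -u^-$ gives $Q\le u$; then items (A.2) or (A.3) (according as $x_0\in\Gamma_{\textsc{op}}^-$ or $\Gamma_{\textsc{tp}}$) deliver $|\nabla Q^-|(x_0)\ge \lambda_-$.

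\smallskip
\emph{Part (ii).} Now suppose $Q$ touches $u^+$ from below at $x_0\in \Gamma_{\textsc{op}}^+$. Since $x_0\in \partial\Omega_u^+\setminus\partial\Omega_u^-$ and $u(x_0)=0$, also $x_0\notin \Omega_u^-$, so $x_0\notin \overline{\Omega_u^-}$. Thus there is a neighborhood of $x_0$ on which $u^-\equiv 0$, i.e.~$u=u^+$. Consequently, the hypothesis $Q\le u^+$ is literally the statement $Q\le u$ near $x_0$, and $Q$ touches $u$ from below at $x_0\in \Gamma_{\textsc{op}}^+$. Item (A.1) of \cref{l:optimality} yields $|\nabla Q^+|(x_0)\le \lambda_+$. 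The symmetric case (touching $-u^-$ from above at a point of $\Gamma_{\textsc{op}}^-$) is identical: there $u=-u^-$ in a neighborhood, so the touching transfers directly to $u$ and item (B.2) gives $|\nabla Q^-|(x_0)\le \lambda_-$.

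\smallskip
There is no genuine obstacle here; the lemma is bookkeeping around the viscosity solution property. The only point worth underscoring is the role of the hypothesis $x_0\in \Gamma_{\textsc{op}}^\pm$ in part (ii): at a two-phase point the inequality $Q\le u^+$ gives no information in the negative phase of $u$, so the reduction to touching of $u$ breaks down — which is exactly why part (ii) is restricted to one-phase free boundary points, while part (i) admits the full free boundary.
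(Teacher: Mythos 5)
Your proof is correct and follows essentially the same route as the paper: reduce the one-sided touching of $u^\pm$ to a touching of $u$ itself (using $u\le u^+$, $u\ge -u^-$ for part (i), and the fact that $u$ has a sign near a one-phase point for part (ii)), then invoke the appropriate items of \cref{l:optimality}. Your explicit case split between $\Gamma_{\textsc{op}}^+$ and $\Gamma_{\textsc{tp}}$ in part (i) matches the paper's appeal to (B.1) and the first part of (B.3), and your use of (A.1) in part (ii) is the intended reference (the paper's citation of (B.2) there is for the symmetric negative-phase case).
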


\begin{proof}
The claim (i) simply  follows by, for instance,  noticing that the assumption implies that \(Q\ge u^+\ge 0\) so that \(Q\) touching \(u\) from above and thus one can apply \(B.1\) and the first part of \(B.3\) in the definition of viscosity solution  and that a symmetric argument holds for \(u^{-}\). 

Concerning claim (ii), we note that  since \(x_0\in \Gamma_{\textsc{op}}^+\), \(u\ge 0\) in a neighborhood  of \(x_0\). In particular, the function \(Q^+\) is touching \(u\) from below at \(x_0\) and thus the conclusion follows by (B.2) in the definition of viscosity solution. 
\end{proof}

\section{Flatness decay}\label{sec:3}

In this section we prove that, at two-phase points, the flatness decays from one scale to the next. Our main result is the following theorem, which applies to any viscosity solution of the two-phase problem.

\begin{theorem}[Flatness decay for viscosity solutions]\label{t:eps_regularity}
For every $L\ge \lambda_+\ge\lambda_->0$ and $\gamma\in(0,\sfrac12)$, there exist $\eps_0>0$, $C>0$ and $\rho\in(0,\sfrac14)$ such that the following holds. 
Suppose that the function $u:B_1\to\R$ satisfies: 
\begin{enumerate}[label=\textup{(\alph*)}]
\item $u$ is \(L\)-Lipschitz continuous; 
\item zero is on the two-phase free boundary,  \(0\in \Gamma_{\text{\sc tp}}=\partial \Omega_{u}^{+}\cap\partial \Omega_{u}^{-}\);
\item $u$ is harmonic in $\Omega_{u}^{+}\cup \Omega_{u}^{-}$;
\item $u$ satisfies the optimality condition \eqref{e:viscosity_solution_def} in viscosity sense;
\item $u$ is $\eps_0$-flat in $B_1$, that is, 
\begin{equation}\label{e:start}
\|u-H_{\alpha,\bo{e}_d}\|_{L^\infty(B_1)}\le \eps_0\qquad\text{for some}\qquad L\ge \alpha\ge\lambda_+\,.
\end{equation}
\end{enumerate}
Then, there are $\bo{e}\in\mathbb S^{d-1}$ and $\tilde\alpha\ge \lambda_+$ such that 
\begin{equation}\label{e:iof_normal_oscillation}
|\bo{e}-\bo{e}_d|+|\tilde\alpha-\alpha|\le C\,\|u-H_{\alpha,\bo{e}_d}\|_{L^\infty(B_1)},
\end{equation}
and
\begin{equation}\label{e:iof_thesis}
\|u_\rho-H_{\tilde\alpha,\bo{e}}\|_{L^\infty(B_1)}\le \rho^{\gamma}\,\|u-H_{\alpha,\bo{e}_d}\|_{L^\infty(B_1)}.
\end{equation}
\end{theorem}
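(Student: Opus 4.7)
The argument proceeds by contradiction, following the general strategy of De~Silva~\cite{desilva} and its adaptation to transmission problems~\cite{dfs,dfs2,dfs3}. Suppose the conclusion fails for some choice of $\gamma$ and $L$; then there is a sequence $u_k$ of viscosity solutions satisfying (a)--(e) with $\eps_k := \|u_k - H_{\alpha_k,\bo{e}_d}\|_{L^\infty(B_1)} \to 0$, while no admissible pair $(\tilde\alpha,\bo{e})$ fulfilling both \eqref{e:iof_normal_oscillation} and \eqref{e:iof_thesis} exists. Passing to a subsequence, $\alpha_k \to \alpha_\infty \in [\lambda_+,L]$ and $\beta_k := \sqrt{\alpha_k^2 - (\lambda_+^2 - \lambda_-^2)} \to \beta_\infty \ge \lambda_-$. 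The natural linearisations
\[
v_k^\pm(x) := \frac{u_k(x) - H_{\alpha_k,\bo{e}_d}(x)}{\eps_k}, \qquad x \in \overline{\{\pm x_d > 0\}} \cap B_{1-\tau},
\]
are uniformly bounded by $1$ in $L^\infty$ by hypothesis~(e), for every fixed $\tau \in (0,1)$.

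\textbf{Step 1 (partial Harnack and compactness).} The first ingredient is a partial Harnack inequality in the spirit of Savin--De~Silva \cite{savin1,desilva}: there exists a universal $\eta_0>0$ such that, if $\|u - H\|_{L^\infty(B_r)} \le \eta r$ with $\eta \le \eta_0$ and $H$ ranges in an \emph{enlarged reference class} consisting of all two-plane solutions \eqref{e:blowup} together with all split profiles \eqref{e:misonotagliatounditocazzo}, then the $L^\infty$ distance of $u$ from some other element of the same class improves by a definite factor in $B_{r/2}$. Enlarging the reference class beyond \eqref{e:blowup} is essential — as anticipated in \cref{sub:intro:flatness} — because near a branching point $u$ may be $L^\infty$-close to a two-plane solution while failing to be a smooth perturbation of it. The inequality will be proved by constructing one-sided barriers that touch only one of the two phases at a time, exploiting \cref{l:NY}. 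A standard Ascoli--Arzel\`a argument along dyadic scales then yields H\"older equicontinuity of $v_k^\pm$ up to $\{x_d=0\}$, and hence precompactness: along a subsequence, $v_k^\pm \to v_\infty^\pm$ locally uniformly on the closed half-balls.

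\textbf{Step 2 (the linearised problem).} Interior harmonicity of $u_k$ on $\Omega_{u_k}^\pm$ passes to the limit, so $\Delta v_\infty^\pm = 0$ in $\{\pm x_d > 0\} \cap B_1$. Writing the two free boundaries as $O(\eps_k)$-graphs $\{x_d = g_k^\pm(x')\}$ with $g_k^+ \ge g_k^-$ and $g_k^\pm/\eps_k \to -v_\infty^\pm(\cdot,0)/\alpha_\infty$ (resp.\ $\beta_\infty$), and linearising the viscosity conditions of \cref{l:optimality}, one identifies $(v_\infty^+, v_\infty^-)$ as a viscosity solution of a two-membrane problem on $\{x_d=0\}$: the ordering $\beta_\infty v_\infty^+ \le \alpha_\infty v_\infty^-$, the transmission relation $\alpha_\infty \partial_{x_d} v_\infty^+ = \beta_\infty \partial_{x_d} v_\infty^-$ on the coincidence set (obtained by linearising $|\nabla u^+|^2 - |\nabla u^-|^2 = \lambda_+^2 - \lambda_-^2$), and the one-sided Neumann inequalities $\partial_{x_d} v_\infty^+ \le 0 \le \partial_{x_d} v_\infty^-$ off the coincidence set (coming from $|\nabla u^\pm| \ge \lambda_\pm$). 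The one-sided character of the latter reflects the genuine non-linearity of the linearised problem noted in \cref{sub:intro:branching}; in the special case $\alpha_\infty > \lambda_+$ the coincidence set is all of $\{x_d=0\}$ and the problem degenerates to the linear transmission problem of \cite{dfs}.

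\textbf{Step 3 (regularity and contradiction).} A $C^{1,\alpha_0}$ regularity estimate for the two-membrane problem (deferred to the appendix referenced in the introduction) yields an affine approximation of $v_\infty^\pm$ at the origin with error $O(\rho^{1+\alpha_0})$ in $B_\rho \cap \{\pm x_d \ge 0\}$, whose horizontal part encodes an infinitesimal rotation $\bo{e}_k$ of $\bo{e}_d$ and whose slope encodes a gradient correction $\tilde\alpha_k$ — both satisfying \eqref{e:iof_normal_oscillation} with a universal constant. Transferring back to $u_k$ gives
\[
\|u_k - H_{\tilde\alpha_k,\bo{e}_k}\|_{L^\infty(B_\rho)} \le \tfrac{1}{2}\,\rho^{\gamma}\,\eps_k
\]
for any $\gamma < \alpha_0$ and $\rho$ small depending on universal constants, contradicting the standing assumption. \emph{The principal obstacle} is Step~1: the partial Harnack over the enlarged reference class demands barriers that decouple the two phases precisely in the regime where the zero set $\{u=0\}$ has positive measure; extracting from the resulting one-sided estimates both the sharp Neumann conditions needed in Step~2 and the $C^{1,\alpha_0}$ estimate in Step~3 constitutes the technical core of the proof.
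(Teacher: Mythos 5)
Your outline is the paper's strategy: contradiction--compactness via a partial Harnack inequality over a reference class enlarged to contain the split profiles \eqref{e:misonotagliatounditocazzo}, identification of the linearized problem as a two-membrane problem (degenerating to the transmission problem of \cite{dfs} in the non-branching regime), regularity for the limit, and transfer back to $u_k$. Two points in your plan, however, are not just unproved details but would derail the argument if carried out as written.

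First, the dichotomy between the two regimes is not governed by whether $\alpha_\infty>\lambda_+$: one can have $\alpha_\infty=\lambda_+$ while $(\alpha_k-\lambda_+)/\eps_k\to\infty$. The correct splitting parameter is the (normalized) limit $\ell$ of $(\alpha_k-\lambda_+)/\eps_k$ as in \eqref{e:ell}, and since the statement must hold uniformly in $\alpha\in[\lambda_+,L]$ the proof has to branch according to whether $\alpha-\lambda_+\le M\|u-H_{\alpha,\bo{e}_d}\|_{L^\infty(B_1)}$ or not — this is exactly why the paper proves two separate lemmas, \cref{l:small_regime} and \cref{l:big_regime}, with \emph{different} partial Harnack inequalities (trapping $u$ between translates of $H_{\alpha,\bo{e}_d}$ in the non-branching regime, versus improving $u^+$ and $u^-$ separately between translates of $\lambda_\pm(x_d+\cdot)^\pm$ in the branching one). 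Second, when $0<\ell<\infty$ the quantity $\ell$ enters the linearized problem as an inhomogeneity: on the jump set the one-phase conditions $|\nabla u_k^\pm|=\lambda_\pm$ linearize to the \emph{equalities} $\lambda_\pm^2\partial_d v_\pm=-\ell$ (so in particular $\partial_d v_-\le 0$ there when $\ell>0$, the opposite of the inequality $0\le\partial_d v_-$ you state), while the conditions $|\nabla u_k^\pm|\ge\lambda_\pm$ give $\lambda_\pm^2\partial_d v_\pm\ge-\ell$ on all of $\{x_d=0\}$; compare \eqref{e:twomembrane}. This is not cosmetic: the resulting constraint $\lambda_+^2p\ge-\ell$ on the limiting slope $p$ is precisely what guarantees that the corrected slope $\tilde\alpha=\alpha(1+\eps p)+o(\eps)$ stays $\ge\lambda_+$, i.e.\ that $H_{\tilde\alpha,\bo{e}}$ is an admissible competitor in \eqref{e:iof_thesis}. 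Without tracking $\ell$ you cannot close the final step of your Step~3.
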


The  proof of \cref{t:eps_regularity} follows easily combining the two upcoming lemmas. In the first one we deal with the situation where the two-plane solution is, roughly, \(H_{\lambda_+}\). Note that this is the situation where one might expect the presence of branching points and it is indeed in this setting that we will obtain the two membrane problem as ``linearization''. In the second lemma, we deal with the case when the closest half-plane solution has a gradient much larger than $\lambda_+$. We will later show that in this case the origin is an interior two-phase point. 

\begin{lemma}[Improvement of flatness: branching points]\label{l:small_regime}
For every $L\ge \lambda_+\ge\lambda_->0$,  $\gamma\in(0,\sfrac12)$,  and \(M>0\), there exist $\eps_1=\eps_1(\gamma,d,L, M)$, \(C_1=C_1(\gamma,d,L, M)\) and $\rho=\rho(\gamma,d,L,M)$ such that the following holds. For every function $u:B_1\to\R$ satisfying (a)-(b)-(c)-(d) of \cref{t:eps_regularity} and such that
\begin{equation*}
\norm{u-H_{\alpha,\bo{e}_d}}_{L^\infty(B_1)}\le \eps_1 \,,\quad\text{with}\quad  0\le \alpha-\lambda_+\le  M\norm{u-H_{\alpha,\bo{e}_d}}_{L^\infty(B_1)},
\end{equation*}
there exist $\bo{e}\in\mathbb S^{d-1}$ and $\tilde\alpha\ge\lambda_+$, for which \eqref{e:iof_normal_oscillation} and \eqref{e:iof_thesis} hold.  
\end{lemma}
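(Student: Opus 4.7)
The strategy is to argue by contradiction via the compactness scheme of De~Silva \cite{desilva}, with the critical novelty that the linearized problem here is the two-membrane system analyzed in \cref{app:twomembrane}. Suppose the conclusion fails: there exist viscosity solutions $u_k$, amplitudes $\alpha_k$ with $0\le \alpha_k-\lambda_+\le M\eps_k$, and flatness scales $\eps_k:=\|u_k-H_{\alpha_k,\bo{e}_d}\|_{L^\infty(B_1)}\to 0$ such that no pair $(\tilde\alpha_k,\bo{e}_k)$ realizes \eqref{e:iof_thesis} at the prescribed $\rho$. The matching constraint $\alpha_k^2-\beta_k^2=\lambda_+^2-\lambda_-^2$ forces $|\beta_k-\lambda_-|=O(\eps_k)$. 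On the positive phase set $v_k^+(x):=(u_k(x)-\alpha_k x_d)/(\alpha_k\eps_k)$, and define $v_k^-$ symmetrically on the negative phase; both are harmonic in their domains, and $\eps_k$-flatness confines $\partial\Omega_{u_k}^\pm$ to an $O(\eps_k)$ tube about $\{x_d=0\}$.

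The compactness of $\{v_k^\pm\}$, which is the technical crux, follows from a partial Harnack inequality for two-phase viscosity solutions in the branching regime: there is a universal $c>0$ such that, whenever $u$ is caught between translates of $H_{\alpha,\bo{e}_d}$ at normal distance $\delta\le\delta_0$, at the next dyadic scale the trapping distance improves to $(1-c)\delta$, uniformly in nearby points of $\Gamma_{\text{\sc tp}}$. Established by barrier comparisons in the spirit of Savin \cite{savin1,savin2}, the iteration of this estimate yields a uniform H\"older modulus for $v_k^\pm$, so Ascoli--Arzel\`a supplies subsequential locally uniform limits $v_\infty^\pm$ harmonic in the open half-spaces $\{\pm x_d>0\}$, with traces $v_\infty^\pm(x',0)=-\lambda_\pm g^\pm(x')$ for limit free-boundary profiles $g^\pm$. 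Crucially, phase-disjointness $\Omega_{u_k}^+\cap\Omega_{u_k}^-=\emptyset$ survives the limit as the obstacle-type ordering $g^+\ge g^-$.

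Propagating the viscosity conditions \eqref{e:viscosity_solution_def} through the rescaling, aided by \cref{l:NY} applied to barriers built from perturbed two-plane solutions, identifies $(v_\infty^+,v_\infty^-)$ as a solution of the two-membrane problem of \cref{app:twomembrane}: harmonicity in each half-space, the two-phase transmission $\lambda_+\partial_{x_d}v_\infty^+=\lambda_-\partial_{x_d}v_\infty^-$ on the coincidence set $\{g^+=g^-\}$, and Signorini-type inequalities $\partial_{x_d}v_\infty^+\le 0$, $\partial_{x_d}v_\infty^-\ge 0$ on the separated set $\{g^+>g^-\}$, the latter corresponding to the one-phase viscosity relations $|\nabla u^\pm|=\lambda_\pm$. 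The $C^{1,\bar\gamma}$ regularity of this problem (for some universal $\bar\gamma>\gamma$) supplied by \cref{app:twomembrane} yields an affine expansion of $v_\infty^\pm$ at the origin; its tangential slope encodes a direction $\bo{e}\in\mathbb S^{d-1}$ and an amplitude $\tilde\alpha\ge\lambda_+$ (the lower bound being automatic from the Signorini inequality) with $\|v_\infty^\pm-L^\pm\|_{L^\infty(B_\rho)}\le C\rho^{1+\bar\gamma}$ for a linear $L^\pm$ encoding $(\tilde\alpha,\bo{e})$. Unwinding the rescaling, this approximation translates into $\|u_k-H_{\tilde\alpha_k,\bo{e}_k}\|_{L^\infty(B_\rho)}\le C\rho^{1+\bar\gamma}\eps_k$ and $|\tilde\alpha_k-\alpha_k|+|\bo{e}_k-\bo{e}_d|\le C\eps_k$ along the subsequence; choosing $\rho$ so small that $C\rho^{1+\bar\gamma}\le\rho^{1+\gamma}$ contradicts the assumption.

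The principal obstacle I foresee is the rigorous identification of the limiting two-membrane problem. Unlike the pure transmission regime of \cite{dfs}, the inequality constraint $g^+\ge g^-$ coming from phase-disjointness renders the linearization genuinely nonlinear, and propagating the viscosity optimality conditions requires a careful case analysis between the coincidence and separated regions via the barrier tests of \cref{l:NY}, with special care needed near the ``branching locus'' $\partial\{g^+=g^-\}$ of the limit where the boundary condition type switches. A secondary technical point is to verify admissibility $\tilde\alpha\ge\lambda_+$ of the new amplitude, which follows from the Signorini sign condition together with the standing normalization $\alpha_k\ge\lambda_+$, and to check that the partial Harnack itself continues to hold at nearby branching points where the separation $g^+-g^-$ may be comparable to the flatness scale $\eps_k$.
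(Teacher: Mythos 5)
Your proposal follows essentially the same route as the paper: a contradiction/compactness scheme, a partial Harnack inequality proved separately for the positive and negative phases so that it survives at branching points (the paper's \cref{lm:partialharnack2}), identification of the limit of the linearizing sequence as a two-membrane problem, and transfer of its $C^{1,\sfrac12}$ regularity (reduction to the thin obstacle problem) back to $u_k$ at a fixed scale $\rho$. The one substantive correction concerns the form of the linearized problem: because the hypothesis only gives $0\le \alpha_k-\lambda_+\le M\eps_k$ rather than $\alpha_k=\lambda_+$, the limiting boundary conditions carry the parameter $\ell=\lambda_+^2\lim_k\frac{\alpha_k^2-\lambda_+^2}{2\alpha_k^2\eps_k}\in[0,M\lambda_+]$ as in \eqref{e:ell}: on the separated (jump) set one gets $\lambda_\pm^2\partial_d v_\pm+\ell=0$ (both derivatives are $\le 0$ there, not one of each sign), the global constraint is $\lambda_\pm^2\partial_d v_\pm+\ell\ge0$, and the transmission condition on the contact set is $\lambda_+^2\partial_d v_+=\lambda_-^2\partial_d v_-$ with \emph{squared} coefficients (this follows from linearizing $|\nabla u^+|^2-|\nabla u^-|^2=\lambda_+^2-\lambda_-^2$ with the normalization $v_{\pm}=(u\mp\alpha_{\pm}x_d^{\pm}\cdot(\dots))/(\alpha_\pm\eps)$). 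This is not mere bookkeeping: with the correct constraint one only gets $\lambda_+^2p\ge-\ell$, which permits $p<0$, so the admissibility $\tilde\alpha\ge\lambda_+$ is \emph{not} automatic from ``$p\ge0$ and $\alpha_k\ge\lambda_+$'' as you suggest. The paper closes this by writing $\alpha_k(1+\eps_kp)=\bigl(\lambda_++\tfrac{\ell}{\lambda_+}\eps_k+o(\eps_k)\bigr)(1+\eps_kp)\ge\lambda_++o(\eps_k)$ and then adding a correction $\delta_k\eps_k$ with $\delta_k\to0$ to define $\tilde\alpha_k$, which preserves both \eqref{e:iof_normal_oscillation} and the decay \eqref{e:iof_thesis}. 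With that repair, your argument matches the paper's proof.
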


\begin{lemma}[Improvement of flatness: non-branching points]\label{l:big_regime}
For every $L\ge \lambda_+\ge\lambda_->0$ and $\gamma\in(0,1)$, there exist $\eps_2=\eps_2(\gamma,d, L)$, \(\overline{M}=\overline{M}(\gamma,d,L)\) and $\rho=\rho(\gamma,d,L)$ \(C_2=C_2(\gamma,d, L)\) such that the following holds. For every function $u:B_1\to\R$ satisfying (a)-(b)-(c)-(d) of \cref{t:eps_regularity} and such that
\begin{equation*}
\norm{u-H_{\alpha,\bo{e}_d}}_{L^\infty(B_1)}\le \eps_2 \,,\quad\text{with}\quad \alpha-\lambda_+\ge \overline{M}\norm{u-H_{\alpha,\bo{e}_d}}_{L^\infty(B_1)} ,
\end{equation*}
there exist $\bo{e}\in\mathbb S^{d-1}$ and $\,\tilde\alpha\ge\lambda_+$, for which \eqref{e:iof_normal_oscillation} and \eqref{e:iof_thesis} hold.   
\end{lemma}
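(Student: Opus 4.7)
The plan is to reduce this ``large-$\alpha$'' regime to the classical two-phase transmission problem of De Silva, Ferrari and Salsa \cite{dfs}, whose improvement of flatness can be applied once the one-phase free boundary points are ruled out near the origin. As a preliminary reduction, I would use $\alpha^2 - \beta^2 = \lambda_+^2 - \lambda_-^2$, $\alpha \leq L$, and the hypothesis $\alpha - \lambda_+ \geq \overline{M}\eps$ (with $\eps := \norm{u - H_{\alpha, \bo{e}_d}}_{L^\infty(B_1)}$) to deduce $\beta - \lambda_- \geq c \overline{M} \eps$ for some $c = c(L, \lambda_\pm) > 0$. Both reference gradients are thus strictly above the one-phase thresholds by order $\overline{M}\eps$, and the flatness further pins the free boundaries $\partial\Omega_u^\pm$ inside a slab of width $\sim \eps$ around $\{x_d = 0\}$.

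The central step would be to show that, for $\overline{M}$ large enough and $\eps_2$ small enough (both depending on $\gamma, d, L$), the free boundary $\partial\Omega_u^\pm \cap B_{1/2}$ contains no one-phase points; equivalently, $\{u = 0\}$ has empty interior in $B_{1/4}$ and $\partial\Omega_u^+ \cap B_{1/4} = \partial\Omega_u^- \cap B_{1/4}$. Arguing by contradiction, suppose $x_0 \in \Gamma_{\text{\sc op}}^+ \cap B_{1/2}$. By \cref{l:NY}(ii), any comparison function $Q$ touching $u^+$ from below at $x_0$ must satisfy $|\nabla Q^+|(x_0) \leq \lambda_+$. On the other hand, the flatness bound together with $\alpha - \lambda_+ \geq \overline{M}\eps$ would allow me to construct a Hopf-type (radial) barrier, modelled on a smooth perturbation of the half-plane $\alpha_1 ((x-y_0) \cdot \bo{e}_d)^+$ with slope $\alpha_1 \in (\lambda_+, \alpha)$, and to slide it into first contact with $u^+$ at $x_0$: the contact gradient would then be $\alpha_1 > \lambda_+$, contradicting \cref{l:NY}(ii). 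A symmetric argument, using the gap $\beta - \lambda_- \gtrsim \overline{M}\eps$ coming from Step~1, rules out $\Gamma_{\text{\sc op}}^-$ points.

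Once one-phase points (and hence branching points) have been excluded in $B_{1/2}$, the function $u$ is a viscosity solution in $B_{1/4}$ of the two-phase transmission problem $\Delta u = 0$ in $\Omega_u^+ \cup \Omega_u^-$ with the jump condition $|\nabla u^+|^2 - |\nabla u^-|^2 = \lambda_+^2 - \lambda_-^2$ on the coincident free boundary $\partial\Omega_u^+ = \partial\Omega_u^-$, with reference gradients bounded in $[\lambda_-, L]$ and strictly above the one-phase thresholds. Applying the improvement of flatness of \cite{dfs}---whose proof proceeds via partial Harnack, compactness of the linearizing sequence, and regularity of the linearized transmission problem---then delivers $\bo{e} \in \mathbb S^{d-1}$, $\tilde\alpha \geq \lambda_+$, and $\rho = \rho(\gamma, d, L) \in (0, 1/4)$ for which \eqref{e:iof_normal_oscillation} and \eqref{e:iof_thesis} hold, with the constant $C_2$ inherited from the DFS result. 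The hard part will be the exclusion step: the barrier's slope and position must be calibrated so that its first contact with $u^+$ can be localized at a $\Gamma_{\text{\sc op}}^+$ point rather than at a two-phase point (where no contradiction arises), and the choice $\alpha - \alpha_1 \sim \alpha - \lambda_+$ combined with the quantitative gap $\overline{M}\eps$ from Step~1 is precisely what enables this localization.
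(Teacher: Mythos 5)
Your Step 1 is fine, and if Step 2 were available, the reduction to the nondegenerate transmission setting of \cite{dfs} would be a reasonable way to conclude. But Step 2 --- the claim that for $\overline M$ large and $\eps_2$ small there are \emph{no} one-phase points in $B_{\sfrac12}$, so that $\partial\Omega_u^+=\partial\Omega_u^-$ there --- is exactly where the proposal has a genuine gap, and you have not given an argument that closes it. The contradiction you want comes from \cref{l:NY}(ii), which only applies when the comparison function touches $u^+$ from below \emph{at a point of} $\Gamma_{\textsc{op}}^+$. A sliding barrier of slope $\alpha_1>\lambda_+$ placed under $u^+$ will make its first contact at some point of $\partial\Omega_u^+$, but nothing forces that point to be a one-phase point: if the contact occurs at a two-phase point, the barrier (being nonnegative) does not touch the full function $u$ from below, so neither (A.3) of \cref{l:optimality} nor \cref{l:NY} gives any information, and the contradiction evaporates precisely in the case you cannot exclude. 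Calibrating the slope $\alpha_1\in(\lambda_+,\alpha)$ does not fix this, because the location of the first contact is governed by the geometry of $\partial\Omega_u^+$ inside the slab of width $\sim\eps$, over which you have no control at this stage (controlling it is tantamount to the regularity you are trying to prove). Note also that the full statement of Step 2 is stronger than anything the paper establishes before the main theorem: from $\eps$-flatness and the gap $\alpha-\lambda_+\ge \overline M\eps$ one cannot conclude that the two boundaries literally coincide in $B_{\sfrac12}$, only that they are $o(\eps)$-close in the limit of a contradicting sequence.

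This is in fact how the paper proceeds, and it is a genuinely different (and weaker, hence feasible) route: one argues by compactness along a contradicting sequence, obtains the linearized limit via \cref{cor:comp} (whose proof uses the two-plane sliding barrier of \cref{lm:partialharnack1}, where the gap $\alpha-\lambda_+\ge \overline M\eps$ is exploited at the contact point through the conditions (A)/(B) of \cref{l:optimality} and \cref{l:NY} --- the barrier there has both a positive and a negative part, so every type of contact point carries viscosity information), then shows in \cref{l:limitprob} that $\ell=\infty$ forces the jump set $\mathcal J$ of the limit to be empty, so the limit solves the \emph{linear} transmission problem \eqref{e:trasmission}; the conclusion follows from \cref{l:reg_trans} (the regularity of the linearized problem, i.e.\ \cite[Theorem 3.2]{dfs}) exactly as in the proof of \cref{l:small_regime}, not from the nonlinear improvement-of-flatness of \cite{dfs} applied as a black box. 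To repair your proposal you would either have to supply a proof of Step 2 (which I do not believe can be done by sliding one-phase barriers, for the reason above), or replace Steps 2--3 by the compactness/linearization scheme, in which case you are back to the paper's argument.
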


Let us first show that  \cref{t:eps_regularity} follows from \cref{l:small_regime} and \cref{l:big_regime}.

\begin{proof}[\bf Proof of \cref{t:eps_regularity}]
Fix \(\gamma \in (0,\sfrac{1}{2})\) and notice that \(\alpha <2L\), where \(L\) is the Lipschitz constant of \(u\).  Next choose  $M=2\overline{M}$ in \cref{l:small_regime}, where $\overline{M}$ is as in \cref{l:big_regime}. Let $\eps_0=\min\big\{\eps_2(2\overline{M}),\sfrac{\eps_1}{2}\big\}$. Then, we can apply either \cref{l:small_regime} or \cref{l:big_regime}.
\end{proof}

In order to prove \cref{l:small_regime} and \cref{l:big_regime}, we will argue by contradiction. Hence in the following we consider a sequence \(u_k\)  of minimizers 
 such that 
\begin{equation}\label{e:ualphak}
\eps_k:=\norm{u_k-H_{\alpha_k,\bo{e}_d}}_{L^\infty(B_1)}\to 0\qquad\text{and}\qquad \lambda_+\le \alpha_k\le L,
\end{equation}
where 
$$ \|\nabla u_k\|_{L^\infty(B_1)}\leq L\qquad\text{for every}\qquad k\ge 1.$$
We also set 
\begin{equation}\label{e:ell}
\ell:= \lambda_{+}^2\lim_{k\to\infty}\frac{\alpha^2_k-\lambda^2_+}{2\alpha^2_k\eps_k}=\lambda_{-}^2\lim_{k\to\infty}\frac{\beta^2_k-\lambda^2_-}{2\beta^2_k\eps_k}
\end{equation}
which  we can assume to exists up to extracting a subsequence. It might be useful to keep in mind that   \(\ell=\infty\) will correspond to \cref{l:big_regime} while  \(0\leq\ell\leq M<\infty\)   to \cref{l:small_regime}.

\medskip
 
In order to prove \cref{l:big_regime} and \cref{l:small_regime}, we will first show that the sequence 
\begin{equation}\label{e:vk}
v_k(x)=
\begin{cases}
v_{+,k}(x):=\dfrac{u_k(x)-\alpha_k x_d^+}{\alpha_k\eps_k}\qquad&x\in \Omega_{u_k}^+\cap B_1
\\
v_{-,k}(x):=\dfrac{u_k(x)+\beta_k x_d^-}{\beta_k\eps_k}\qquad&x\in \Omega_{u_k}^-\cap B_1
\end{cases}
\end{equation}
is compact in some suitable sense; we give the precise statement in  \cref{cor:comp} below and we postpone the proof to \cref{sub:compactness}. We then establish in \cref{l:limitprob} the limiting problem solved by  its limit \(v\). Note that this problem  depends on the value of $\ell$ which is distinguishing  whether we are or not at branching points.

Finally, in \cref{sub:proof_iof} we show how to deduce \cref{l:big_regime} and \cref{l:small_regime} from \cref{cor:comp} and \cref{l:limitprob}. For the remainder of the paper we will denote with 
$$
B_r^{\pm}:=B_r\cap \{ x_d^\pm>0 \}\,,\qquad \text{for every }r>0\,.
$$

\begin{lemma}[Compactness of the linearizing sequence  $v_k$]\label{cor:comp}
	Let  $u_k$ be a sequence of functions satisfying (a), (b), (c) and (d) of \cref{t:eps_regularity} uniformly in \(k\) and let  \(\eps_k\)  and \(\alpha_k\) be as in \eqref{e:ualphak} and let \(v_k\) be defined by \eqref{e:vk}. Then there are H\"older continuous functions 
	\[
	v_+ : \overline{B^+_{\sfrac12}}\to\R\qquad\text{and}\qquad  v_-: \overline{B^-_{\sfrac12}} \to\R,
	\]
	with 
	\[
	v_+\le v_-\text{ on } B_{\sfrac12}\cap \{x_d=0\}, \qquad  v_+ (0)=v_-(0)=0,
	\]
	and such that the sequences of closed graphs
	\[
	\Gamma_k^\pm:=\Big\{(x, v_{\pm,k}(x))\ :\ x\in \overline{\Omega_{u_k}^\pm\cap B_{\sfrac12}}\Big\},
	\]
	converge, up to a (non-relabeled) subsequence,  in the Hausdorff distance to the closed graphs
	\[
	\Gamma_\pm=\Big\{(x, v_\pm(x))\ :\ x\in \overline{B_{\sfrac12}^\pm}\Big\}.
	\]
	In particular, the following claims hold. 
	\begin{enumerate}[label=\textup{(\roman*)}]
		\item For every $\delta>0$, $v_{\pm,k}$ converges uniformly to $v\pm$ on $B_{\sfrac12}\cap\{ \pm x_d>\delta\}$. 
		\item For every sequence $x_k\in \overline{\Omega_{u_k}^\pm}\cap B_1$ converging to $x\in \overline{B_{\sfrac12}^\pm}$, we have 
		$$ v_\pm(x)=\lim_{k\to\infty} v_{\pm,k}(x_k).$$
		\item For every $x\in\{x_d=0\}\cap B_{\sfrac12}$ , we have
		$$v_\pm(x)=\mp\lim_{k\to\infty}\frac{x_k\cdot \bo{e}_d}{\alpha_k\eps_k}\quad\text{for any sequence}\quad \partial\Omega_{u_k}^\pm\ni x_k\to x.$$
	\end{enumerate}
	In particular,  $\{x_d=0\}\cap \overline B_{\sfrac12}$ decomposes into a open  jump set 
	\[
	\mathcal J=\{v_+<v_-\}\cap \{x_d=0\}\cap \overline B_{\sfrac12},
	\]
	and its complementary contact set
	\[
	\mathcal C=\{v_+=v_-\}\cap \{x_d=0\}\cap \overline B_{\sfrac12}.
	\]
Furthermore, if \(x\in \mathcal J\), then
\begin{equation}\label{e:nodp}
\liminf_{k\to\infty}\dist\big(x,\partial \Omega_{u_k}^+\cap\partial\Omega_{u_k}^-\big)>0.
\end{equation}
In particular for all \(x\in \mathcal J\), there exists two sequences \(x^\pm_k\in \Gamma_{k,\textsc{op}}^{\pm}\) such that \(x_k^\pm\to x\).
\end{lemma}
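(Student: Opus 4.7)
My plan is to use a partial Harnack inequality to establish a uniform Hölder modulus of continuity for the sequences \(v_{\pm,k}\) on \(\overline{\Omega_{u_k}^\pm}\cap B_{1/2}\), extract a subsequential uniform limit via Arzelà--Ascoli, and then read off properties (i)--(iii), the inequality \(v_+\leq v_-\) on the interface, and the no-branching statement \eqref{e:nodp}. The uniform \(L^\infty\) bound \(|v_{\pm,k}|\leq 1/\lambda_\pm\) is immediate from \(\eps_k\)-flatness and \(\alpha_k\geq\lambda_+\), \(\beta_k\geq\lambda_-\). For the interior, \(\eps_k\)-flatness together with the non-degeneracy of \cref{l:properties_of_min}(ii) ensure that, for each fixed \(\delta>0\) and \(k\) large, \(B_{3/4}\cap\{x_d>\delta\}\subset\Omega_{u_k}^+\) and symmetrically for the negative phase; there \(v_{\pm,k}\) is harmonic and uniformly bounded, so standard elliptic theory provides uniform \(C^{1,\alpha}\) bounds on compact subsets of \(B_{1/2}\cap\{\pm x_d>\delta\}\).

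The main obstacle is extending equicontinuity up to the flat interface \(\{x_d=0\}\), and this is the technical heart of the argument. It is handled by a partial Harnack inequality in the spirit of De~Silva~\cite{desilva}, using the viscosity formulation \eqref{e:viscosity_solution_def} supplied by \cref{l:optimality}. Constructing sub- and super-barriers as small radial perturbations of the half-plane solution \(H_{\alpha_k,\bo{e}_d}\) and comparing with \(u_k\) via the viscosity condition, one shows that at every free boundary point \(y_0\in\partial\Omega_{u_k}^\pm\cap B_{3/4}\), if \(u_k\) is \(\eps\)-close to \(H_{\alpha_k,\bo{e}_d}\) on \(B_r(y_0)\) with \(\eps\) small, then on \(B_{\rho r}(y_0)\) the deviation decreases by a universal multiplicative factor, up to an error vanishing with \(\eps_k\). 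Iterating dyadically yields a uniform Hölder modulus of continuity for \(v_{\pm,k}\) on \(\overline{\Omega_{u_k}^\pm}\cap B_{1/2}\), independently of whether one is near a one-phase or a two-phase portion of the free boundary.

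With this modulus in hand, Arzelà--Ascoli together with a standard diagonal argument produces subsequential Hölder limits \(v_\pm\in C^{0,\alpha}(\overline{B^\pm_{1/2}})\), and the Hausdorff convergence of the graphs \(\Gamma_k^\pm\to\Gamma_\pm\) follows, which together with (i) is then routine. Claim (ii) is a direct consequence of uniform convergence on compact subsets, and (iii) is obtained by passing to the limit in the boundary identities \(v_{+,k}(y)=-(y\cdot\bo{e}_d)^+/\eps_k\) and \(v_{-,k}(y)=(y\cdot\bo{e}_d)^-/\eps_k\), valid at any \(y\in\partial\Omega_{u_k}^\pm\) since \(u_k(y)=0\), combined with \(\alpha_k\to\lambda_+\) and \(\beta_k\to\lambda_-\). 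The inequality \(v_+\leq v_-\) on \(\{x_d=0\}\cap \overline{B_{1/2}}\) reflects the disjointness of the phases: flatness forces \(\{u_k>0\}\subset\{x_d>-C\eps_k\}\) and \(\{u_k<0\}\subset\{x_d<C\eps_k\}\), and \(\Omega_{u_k}^+\cap\Omega_{u_k}^-=\emptyset\) implies \((x_k^+)_d\geq (x_k^-)_d\) for approximating free boundary points, whence the inequality follows via (iii). Finally, \eqref{e:nodp} is a direct consequence of the strict gap \(v_-(x)-v_+(x)>0\) on \(\mathcal J\): by (iii) it forces \((x_k^-)_d-(x_k^+)_d\geq c\alpha_k\eps_k\) for \(k\) large, and the \(L\)-Lipschitz bound on \(u_k\) promotes this vertical separation to a uniform \(O(\eps_k)\) Euclidean gap between \(\partial\Omega_{u_k}^+\) and \(\partial\Omega_{u_k}^-\) in a fixed neighborhood of \(x\), ruling out any two-phase point there and producing the advertised sequences \(x_k^\pm\in\Gamma_{k,\textsc{op}}^\pm\) with \(x_k^\pm\to x\).
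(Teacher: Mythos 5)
The skeleton you describe (partial Harnack up to the interface, Ascol\`i--Arzel\`a, then (i)--(iii), the ordering $v_+\le v_-$ and \eqref{e:nodp}) is indeed the strategy of the paper, but the key step --- the partial Harnack inequality --- is asserted by you in a form that is false, and its failure is precisely the difficulty this lemma is designed to overcome. You propose to show that, at \emph{every} free boundary point, the deviation of $u_k$ from (translates/perturbations of) the two-plane solution $H_{\alpha_k,\bo{e}_d}$ decays by a universal factor, ``independently of whether one is near a one-phase or a two-phase portion of the free boundary''. In the branching regime, i.e.\ when $\alpha_k-\lambda_+=O(\eps_k)$ (finite $\ell$ in \eqref{e:ell}), the zero set of $u_k$ may contain a slab of thickness comparable to the current oscillation: the model is $\lambda_+(x_d+\eps_1)^+-\lambda_-(x_d-\eps_2)^-$ from \eqref{e:misonotagliatounditocazzo}, which is $\max\{\eps_1,\eps_2\}$-close to $H_{\lambda_+,\bo{e}_d}$ but whose distance from the family of translated two-plane solutions does \emph{not} decrease at smaller scales. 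Consequently no oscillation-decay statement measured against $H_{\alpha_k,\bo{e}_d}(\,\cdot+c\,\bo{e}_d)$ can hold uniformly near branching points, and the dyadic iteration you invoke to produce a H\"older modulus for $v_{\pm,k}$ up to $\{x_d=0\}$ breaks down exactly where it is needed. This is why the paper proves two different partial Harnack inequalities and splits the proof of the lemma according to $\ell$: when $\ell<\infty$ it traps $u_k^+$ and $-u_k^-$ \emph{separately} between translates of the one-plane profiles $\lambda_+x_d^+$ and $-\lambda_-x_d^-$, with four independent parameters $a_\pm,b_\pm$ (\cref{lm:partialharnack2}), so that detached configurations like \eqref{e:misonotagliatounditocazzo} belong to the comparison class, and only afterwards corrects back to $v_k$ using the linear functions $(H_{\alpha_k,\bo{e}_d}-H_{\lambda_+,\bo{e}_d})/\eps_k$; the two-plane trapping you describe is legitimate only when $\alpha_k-\lambda_+\ge M\eps_k$, i.e.\ $\ell=\infty$ (\cref{lm:partialharnack1}). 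Moreover, even in that regime the sliding argument of \cite{desilva,dfs} produces a positive term proportional to $\alpha-\beta$, which may vanish when $\lambda_+=\lambda_-$; the paper first increases the slope of the trapping solution (\cref{lm:improvement}) and exploits $\alpha\ge\lambda_++M\eps$ to reach a contradiction. Your sketch addresses neither point, so the equicontinuity up to the interface --- which you correctly single out as the heart of the matter --- is not established.

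Two smaller remarks. The lemma assumes only (a)--(d), so the non-degeneracy of \cref{l:properties_of_min} is not available here (nor needed: $\eps_0$-flatness alone gives $\{x_d>\delta\}\cap B_{3/4}\subset\Omega^+_{u_k}$ for $k$ large). The convergences $\alpha_k\to\lambda_+$, $\beta_k\to\lambda_-$ hold only when $\ell<\infty$; in general $\alpha_k$ converges (up to subsequences) to some $\alpha_\infty\in[\lambda_+,L]$, and neither limit is needed for (iii). Finally, in deriving $v_+\le v_-$ the inequality between the heights of the two free boundaries is not valid for arbitrary approximating sequences; as in the paper, one should compare the topmost point of $\partial\Omega^+_{u_k}$ with the bottommost point of $\partial\Omega^-_{u_k}$ on a fixed vertical line, for which disjointness of the phases does give the ordering.
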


In the next lemma  we determine the limiting problem solved by the function \(v\) defined as
\begin{equation}
\label{e:defv}
v(x)=
\begin{cases}
v_+(x)\quad\text{for}\quad x\in B^{+}_{\sfrac{1}{2}},
\\
v_-(x)\quad\text{for}\quad x\in B^{-}_{\sfrac{1}{2}},
\end{cases}
\end{equation}
where  $v_+$ and $v_-$ are as in \cref{cor:comp}.

\begin{lemma}[The ``linearized'' problem]\label{l:limitprob}
	Let  $u_k$, \(\eps_k\)  and \(\alpha_k\) be as in \eqref{e:ualphak},  \(v_k\) be defined by \eqref{e:vk} and \(\ell\) as in \eqref{e:ell}.  Let also  \(v_\pm\) be as in  \cref{cor:comp}:
	
\medskip
\noindent 
 $\bo{\ell=\infty:}$ Then \(\mathcal J=\emptyset\) and \(v_\pm\) are viscosity solutions of the transmission problem:
		\begin{equation}\label{e:trasmission}
		\begin{cases}
		\Delta v_{\pm}=0 \qquad&\text{in \(B^\pm_{\sfrac{1}{2}}\)}
		\\
		\alpha_\infty^2\partial_{d} v_{+}  = \beta_\infty^2\partial_{d} v_{-}  &\text{on \(B^\pm_{\sfrac{1}{2}}\cap \{x_d=0\}\)}
		\end{cases}
		\end{equation}
		where \(\alpha_\infty=\lim_k \alpha_k\) and \(\beta_\infty=\lim_k \beta_k\), which we can assume to exist up to extracting a further subsequence.

\medskip
\noindent 
$\bo{0\leq\ell<\infty:}$ Then \(v\) is a viscosity solution of the two membrane problem:
		\begin{equation}\label{e:twomembrane}
		\begin{cases}
		\Delta v_{\pm}=0 \qquad&\text{in \(B^\pm_{\sfrac{1}{2}}\)}
		\\
		\lambda^2_{\pm} \partial_{d} v_{\pm}+\ell\ge 0 &\text{in \(B_{\sfrac{1}{2}}\cap\{x_d=0\}\)}
		\\
		\lambda_{\pm}^2\partial_{d} v_{\pm}  +\ell=0 &\text{in \(\mathcal J\)}
		\\
		\lambda_+^2\partial_{d} v_{+} = \lambda_-^2\partial_{d} v_{-}  &\text{in \(\mathcal C\)}
		\\
		v_+\leq v_- & \text{in \(B_{\sfrac{1}{2}}\cap \{x_d=0\}\) }
		\end{cases}\,.
		\end{equation}
\end{lemma}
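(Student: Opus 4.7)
My plan is to verify each condition in the limit problem by lifting suitable test functions for $v_\pm$ to admissible comparison functions for $u_k$, applying the viscosity optimality conditions from \cref{l:optimality} and \cref{l:NY}, and passing to the limit via the scaling \eqref{e:ell}. The interior harmonicity of $v_\pm$ in $B^\pm_{\sfrac12}$ is immediate: since $u_k$ and $\alpha_k x_d^+$ are both harmonic in $\Omega_{u_k}^+$, the function $v_{+,k}$ is harmonic in $\Omega_{u_k}^+$, and symmetrically for $v_{-,k}$; combining this with the uniform convergence on compact subsets of $B^\pm_{\sfrac12}$ from \cref{cor:comp}\,(i) and standard interior estimates gives $\Delta v_\pm=0$ in $B^\pm_{\sfrac12}$, while $v_+\le v_-$ on $\{x_d=0\}$ is already built into \cref{cor:comp}.

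The main work is the boundary conditions on $\{x_d=0\}\cap B_{\sfrac12}$. For the obstacle and jump conditions I use one-phase comparisons: given $\phi$ touching $v_+$ strictly from above (resp.\ below) at $x_0$, I lift it to $Q_k^+(x):=\alpha_k(x_d+\eps_k\phi(x))^+$, whose linearization in $\{x_d>0\}$ is $\phi$. A standard contact argument (using strict touching, the Hausdorff convergence of the graphs $\Gamma_k^\pm$ from \cref{cor:comp}, and the separation \eqref{e:nodp} to locate the contact point on $\Gamma^+_{k,\textsc{op}}$ when $x_0\in \mathcal J$) produces $x_k\to x_0$ with $x_k\in\partial\Omega^+_{u_k}$ at which $Q_k^+$ touches $u_k^+$ from the corresponding side. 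By \cref{l:NY} and the expansion $|\nabla Q_k^+|(x_k)=\alpha_k+\alpha_k\eps_k\,\partial_d\phi(x_k)+O(\eps_k^2)$, dividing by $\eps_k$ and letting $k\to\infty$ (using $(\alpha_k-\lambda_+)/\eps_k\to \ell/\lambda_+$ when $\ell<\infty$) yields
\[
\ell+\lambda_+^2\,\partial_d\phi(x_0)\gtreqless 0,
\]
i.e.\ the viscosity condition $\lambda_+^2\partial_d v_++\ell\ge 0$ on $\{x_d=0\}$, with equality on $\mathcal J$; a symmetric construction with $Q_k^-:=-\beta_k(x_d+\eps_k\phi)^-$ yields the analogous statements for $v_-$.

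The transmission on $\mathcal C$ is more delicate: a straightforward two-phase test $Q_k^{\textsc{tp}}(x):=\alpha_k(x_d+\eps_k\phi)^+-\beta_k(x_d+\eps_k\phi)^-$ touches $u_k$ at a point $x_k\in \Gamma_{k,\textsc{tp}}$, but (A.3)/(B.3) of \cref{l:optimality} alone only produces $(\lambda_+^2-\lambda_-^2)\,\partial_d\phi(x_0)\gtreqless 0$. To recover the true transmission $\alpha_\infty^2\partial_d v_+=\beta_\infty^2\partial_d v_-$ one has to work at the level of the \emph{gradient} of $u_k$: linearizing $|\nabla u_k^+|^2-|\nabla u_k^-|^2=\lambda_+^2-\lambda_-^2$ while using $\alpha_k^2-\beta_k^2=\lambda_+^2-\lambda_-^2$ yields
\[
\alpha_k^2\,\partial_d v_{+,k}-\beta_k^2\,\partial_d v_{-,k}=O(\eps_k)
\]
at interior two-phase points of $u_k$ (which accumulate on $\mathcal C$), and uniform interior estimates for the harmonic functions $v_{\pm,k}$, passed to the limit, give the transmission; in the case $\ell<\infty$ we have $\alpha_\infty=\lambda_+$ and $\beta_\infty=\lambda_-$, so this specializes to $\lambda_+^2\partial_d v_+=\lambda_-^2\partial_d v_-$. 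Finally, $\mathcal J=\emptyset$ when $\ell=+\infty$ is obtained by contradiction: if $x_0\in \mathcal J$, touching $v_+$ at $x_0$ from both above and below (legitimate because \eqref{e:nodp} places the contact points on $\Gamma^+_{k,\textsc{op}}$) yields the two inequalities $\ell+\lambda_+^2\partial_d\phi(x_0)\gtreqless 0$, forcing $\ell=-\lambda_+^2\partial_d\phi(x_0)<\infty$, contradicting $\ell=+\infty$.

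The principal technical obstacle is the localization of the contact point $x_k$ on the correct stratum of $\partial\Omega_{u_k}$ so that the matching branch of \cref{l:optimality}/\cref{l:NY} applies: $\Gamma^\pm_{k,\textsc{op}}$ for the one-phase tests giving the obstacle/jump conditions, and $\Gamma_{k,\textsc{tp}}$ for the two-phase test that feeds into the transmission. This relies essentially on the dichotomy $\mathcal J$ vs.\ $\mathcal C$, on the separation estimate \eqref{e:nodp}, and on the routine perturbation $\phi\mapsto \phi+\eta|x-x_0|^2$ with $\eta>0$ small to make the touching strict and isolated; once the correct branch is identified, the remainder is a routine first-order expansion in $\eps_k$.
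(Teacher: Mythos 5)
Your treatment of the interior harmonicity, of $v_+\le v_-$, and of the one-phase (obstacle/jump) conditions is essentially the paper's argument: lift a strictly sub/superharmonic test function touching $v_\pm$, locate the contact point on $\partial\Omega^\pm_{u_k}$ (on $\Gamma^\pm_{k,\textsc{op}}$ when $x_0\in\mathcal J$, via \eqref{e:nodp}), apply \cref{l:NY}, expand in $\eps_k$ and use \eqref{e:ell}; this is exactly the role of \cref{l:touching}(1)--(2) in the paper, and your ``standard contact argument'' is an acceptable compression of it. The genuine gap is in the transmission condition on $\mathcal C$ (and on $\{x_d=0\}$ when $\ell=\infty$). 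There you abandon the viscosity framework and propose to linearize the pointwise identity $|\nabla u_k^+|^2-|\nabla u_k^-|^2=\lambda_+^2-\lambda_-^2$ at ``interior two-phase points of $u_k$'' and pass to the limit by ``uniform interior estimates''. This step would fail: the free boundary relation holds for $u_k$ only in the viscosity sense (no differentiability of $u_k^\pm$ at free boundary points is known at this stage -- that is what the whole scheme is trying to prove); interior estimates for the harmonic functions $v_{\pm,k}$ give no uniform control of $\partial_d v_{\pm,k}$ up to $\{x_d=0\}$, where the estimates degenerate precisely at the free boundary; it is not justified that interior two-phase points of $u_k$ accumulate on every point of $\mathcal C$; and the limit $v_\pm$ is at this stage only H\"older up to $\{x_d=0\}$, so $\partial_d v_\pm$ on $\{x_d=0\}$ has no classical meaning -- which is exactly why the transmission condition must itself be formulated and verified in the viscosity sense, as in \cref{def:viscosity_linearized}(3).

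The missing idea is to allow \emph{different} slopes in the two phases: test with $P=p\,x_d^+-q\,x_d^-+\tilde P$, $\partial_d\tilde P=0$, $\tilde P$ strictly subharmonic, and assume for contradiction $\alpha_\infty^2 p>\beta_\infty^2 q$ (resp.\ $\lambda_+^2p>\lambda_-^2q$). Your observation that the equal-slope two-phase test only yields $(\lambda_+^2-\lambda_-^2)\partial_d\phi\gtreqless 0$ is correct, but the remedy is not to leave the viscosity setting; it is \cref{l:touching}(3), which lifts such two-slope tests to comparison functions $Q_k$ with the expansions \eqref{e:sticazzi3}. Moreover, your claim that the lifted two-phase test touches $u_k$ at a point of $\Gamma_{k,\textsc{tp}}$ is unjustified: the contact point may a priori lie on $\partial\Omega^+_{u_k}\setminus\partial\Omega^-_{u_k}$, and the paper's proof must (and does) handle this case separately, ruling it out through the one-phase inequality $\lambda_+^2\ge|\nabla Q_k^+(x_k)|^2$ combined with the previously established bound $\lambda_+^2p>-\ell$ (or with $\ell=\infty$); only in the remaining two-phase case does (A.3) of \cref{l:optimality} together with $\alpha_k^2-\beta_k^2=\lambda_+^2-\lambda_-^2$ give the contradiction with $\alpha_\infty^2p>\beta_\infty^2q$. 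Finally, your argument that $\mathcal J=\emptyset$ when $\ell=\infty$ is too loose as stated: you cannot touch $v_+$ ``from both above and below'' at a prescribed point without already knowing differentiability; what is needed (and what the paper does) is to construct a strictly subharmonic barrier -- a quadratic polynomial with large coefficients, slid vertically -- that touches $v_+$ from below at \emph{some} point of $\mathcal J$, after which the one-phase inequality at nearby points of $\Gamma^+_{k,\textsc{op}}$ and $\ell=\infty$ give the contradiction; only the one-sided touching is used.
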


\begin{remark}\label{def:viscosity_linearized}
	Here by \emph{viscosity solution of \eqref{e:trasmission} and \eqref{e:twomembrane}} we mean a  function $v$ as in \eqref{e:defv} such that \(v_\pm\) are continuous in  \(\overline{B^\pm_{\sfrac{1}{2}}}\), $\Delta v_\pm=0$ in $B^\pm_{\sfrac{1}{2}}$ and such that the following holds. 
	
\begin{itemize}
\item[-]	If we are in case \eqref{e:trasmission}, let  \(p,q\in \R\) and let \(\tilde P\) be a smooth function such that \(\partial_d \tilde P=0\). Suppose that \(\tilde{P}\) is subharmonic (superharmonic) and that the function
	\[
	P:=px_{d}^{+}-qx_{d}^{-}+\tilde{P}
	\]
	touches \(v\) strictly from below (above) at a point \(x_0\in B_{\sfrac12}\cap \{x_d=0\}\), then 
	\[
	\alpha^2_\infty p\le \beta^2_\infty q\qquad \Bigl( \alpha^2_\infty p\ge \beta^2_\infty q\Bigr)\,.
	\]
\item[-]	If we are in case \eqref{e:twomembrane} then
	\begin{enumerate}
		\item if $P_\pm$ is a smooth superharmonic function in $B_{\sfrac12}^\pm$ touching $v_\pm$ strictly from above at $x_0\in B_{\sfrac12}\cap \{x_d=0\}$, then $\lambda_{\pm}^2\de_d P_\pm \ge 0$;
		\item if $P_\pm$ is a smooth subharmonic function in $B_{\sfrac12}^\pm$ touching $v_\pm$ strictly from below at $x_0\in \mathcal J$, then $\lambda_{\pm}^2\de_d P_\pm \le 0$; 
		\item if \(p,q\in \R\) and \(\tilde P\) is a smooth subharmonic (superharmonic) function such that \(\partial_d \tilde P=0\) and such that the function
		\[
		P:=px_{d}^{+}-qx_{d}^{-}+\tilde{P}
		\]
		touches \(v\) strictly from below (above) at a point \(x_0\in B_{\sfrac12}\cap \{x_d=0\}\), then 
		\[
		\lambda_+^2p \le \lambda_-^2 q \qquad \Bigl(\lambda_+^2p \ge \lambda_-^2q \Bigr)\,.
		\]
	\end{enumerate}
	
	\end{itemize}

\end{remark}

\subsection{Compactness of the linearizing sequence. Proof of \cref{cor:comp}}\label{sub:compactness}

\noindent The key point in establishing a suitable compactness for  \(v_k\) is a  ``partial Harnack'' inequality, in the spirit of \cite{desilva,dfs}. As explained in the introduction, in dealing with branching points one needs to work separately on the positive and negative part. An additional difficulties arise also at pure two-phase points since we want also to deal with the case  \(\lambda_-=\lambda_+\). Let us briefly explain the ideas of the proof.

If \(u\) is close in \(B_1\) to  a global solution of the form \(H_{\alpha,\bo{e_d}}\) with \(\alpha >\lambda_+\), then we expect that in a small neighborhood $B_\rho$ of the origin the level set $\{u=0\}$ has zero Lebesgue measure and that all the free boundary points in $B_\rho$ are ``interior" two-phase points (indeed, at the end, this will be a consequence of  the $C^{1}$ regularity of $u$ and of the free boundary). In this case one expects to be able to do the same argument as in \cite{dfs}. This is true except for the  following caveat, if one wants to deal with the case \(\lambda_-=\lambda_+\) then the sliding arguments used in \cite{desilva,dfs} (see also  \cite{caf1,caf2}) does not yield the desired contradiction since the positive term might actually be zero. For this reason one has first to ``increase'' the slope of the trapping solution, so that the sliding argument would give the desired contradiction. Namely if \(u\) is trapped between two translation of a two-plane solution:
\[
H_{\alpha, \bo{e}_d}(x+b)\le u\le H_{\alpha, \bo{e}_d}(x+a)
\]
in say \(B_1\) and at the point \(P=(0,\dots,0,\sfrac{1}{2})\) \(u\) is closer to \(H_{\alpha, \bo{e}_d}(\cdot+a)\) then to \(H_{\alpha, \bo{e}_d}(\cdot+b)\), we can increase in a quantitative way the slope of the positive part of the lower two-plane solution in half ball, i.e. 
\[
 u\ge \alpha'(x+b)^+-\beta(x+b)^+ \qquad \alpha'>\alpha,
\]
see \cref{lm:improvement}. The sliding argument of \cite{desilva,dfs} then allows to translate this to a a (quantitative) increase of \(b\), yielding the  partial decay of flatness of the free boundary. This is the situation studied in \cref{lm:partialharnack1}.

If instead \(u\) is close to \(H_{\lambda_+,\mathbf{e}_d}\) then the free boundary can behave in several different ways. 
Indeed, in this case  the origin  can be either an interior two-phase point, a branching two-phase point but it might also happen that
$$
u(x)\approx \lambda_+ (x_d+\eps_1)_+-\lambda_- (x_d-\eps_2)_-\quad\text{with}\quad 0<\eps_1,\eps_2\ll1.
$$
Since  as explained in the introduction we have to deal with all the of the above situations  we have to prove a decay in this situation is  to improve separately the positive and the negative parts of $u$. More precisely if in \(B_1\) 
\[
\lambda_{+}\bigl(x_d+ b_+\bigr)^+\le u^+(x)\le  \lambda_{+} \bigl(x_d+ a_+\bigr)^+.
\qquad 
-\lambda_{-}\bigl(x_d+ b_-\bigr)^-\le -u^-(x)\le - \lambda_{-}\bigl(x_d+ a_-\bigr)^-
\]
for suitable \(a_{\pm}, b_{\pm}\), one wants to  find new constants $\bar a_{\pm}, \bar b_{\pm}\in$ with 
\[
(\bar b_--\bar a_-)<(b_--a_-)\qquad (\bar b_+-\bar a_+)<(b_+-a_+)
\]
and for which, in half the ball, 
\[
\lambda_{+}\bigl(x_d+ \bar b_+\bigr)^+\le u^+(x)\le  \lambda_{+} \bigl(x_d+ \bar a_+\bigr)^+.
\qquad 
-\lambda_{-}\bigl(x_d+ \bar b_-\bigr)^-\le -u^-(x)\le - \lambda_{-}\bigl(x_d+ \bar a_-\bigr)^-.
\]
Here one has to distinguishes the case in which, say, the lower function
\[
\lambda_{+}\bigl(x_d+ \bar b_+\bigr)^+-\lambda_{-}\bigl(x_d+ \bar b_-\bigr)^-
\]
is looks like a two plane solution, i.e \(b_+-b_-\ll1\), or not and to perform different comparisons according to the situation. This dealt in \cref{lm:partialharnack2}.

We start with the following simple lemma which allows to ``increase'' the slop of the comparison functions.

\begin{lemma}\label{lm:improvement}
There is a dimensional constants \(\tau=\tau(d)>0\) such that the following hold.   Assume that   \(v: B_1\to \R\)  is  a continuous function with \(\Delta v=0\) on \(\{v>0\}\) and such that
\[
\lambda \bigl(x_d+b\bigr)^+\le v \le  \lambda \bigl(x_d+a\bigr)^+\,,
\]
for some \(a,b\in \big(-\sfrac{1}{100},  \sfrac{1}{100}\big)\).  Let  \(P=( 0, \dots, 0, \sfrac{1}{2})\), then for all \(\eps\in (0,\frac{1}{2})\)
\begin{align*}
v(P)&\le \lambda(1-\eps)\Bigl(\frac{1}{2}+a\Bigr)^+ && \Longrightarrow  &&v\le \lambda(1-\tau \eps) \bigl(x_d+a\bigr)^+\quad \text{in}\quad B_{\sfrac{1}{4}}(0)\,,
\end{align*}
and
\begin{align*}
v(P)&\ge \lambda(1+\eps) \Bigl(\frac{1}{2}+b\Bigr)^+&& \Longrightarrow  &&  v\ge \lambda(1+\tau \eps) \bigl(x_d+b\bigr)^+ \quad \text{in}\quad  B_{\sfrac{1}{4}}(0).
\end{align*}
\end{lemma}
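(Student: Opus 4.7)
The two implications are symmetric, so we sketch the first; the second follows by applying it to $-v$ with $a$ and $b$ exchanged. Set $h(x):=\lambda(x_d+a)-v(x)$ and let $\Omega_0$ be the connected component of $\{v>0\}\cap B_{3/4}$ containing $P=(0,\ldots,0,\tfrac12)$. By the upper bound, $\Omega_0\subset\{x_d>-a\}$, so both $v$ and $\lambda(x_d+a)$ are harmonic on $\Omega_0$; hence $h$ is a non-negative harmonic function there, with
\[
h(P)\;\ge\;\eps\lambda\bigl(\tfrac12+a\bigr)\;\ge\;\tfrac{49}{100}\,\eps\lambda.
\]
The desired conclusion is equivalent to the pointwise estimate $h(x)\ge\tau\eps\lambda(x_d+a)^+$ on $B_{1/4}$, which is trivial wherever $v=0$.

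The lower bound $v\ge\lambda(x_d+b)^+$ with $b\in(-\tfrac{1}{100},\tfrac{1}{100})$ plays a decisive role: it forces the smooth half-slab $\mathcal H:=\{x_d>-b\}\cap B_{3/4}$ to be contained in $\Omega_0$, so $h$ is a non-negative harmonic function on the Lipschitz domain $\mathcal H$. Chaining Harnack's inequality along balls inside the safe strip $\{x_d\ge\tfrac{1}{50}\}\cap B_{3/4}\subset\mathcal H$, we get a dimensional $c_0>0$ with $h\ge c_0\eps\lambda$ on $\{x_d\ge\tfrac{1}{20}\}\cap B_{1/2}$. The classical boundary-Harnack / Carleson estimate on $\mathcal H$, comparing $h$ with the model harmonic function $(x_d+b)$, then upgrades this to
\[
h(x)\;\ge\;c_1\,\eps\lambda\,(x_d+b)^+\qquad\text{on}\quad\mathcal H\cap B_{1/2},
\]
for a dimensional $c_1>0$.

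With these two bounds in hand, consider $\psi:=h-\tau\eps\lambda(x_d+a)$, harmonic on $\Omega_0\cap B_{1/2}$, for $\tau$ a small dimensional constant. On the free boundary $\partial\Omega_0\cap B_{1/2}$, $v=0$ gives $\psi=(1-\tau\eps)\lambda(x_d+a)^+\ge0$; on the ``high'' lateral part $\partial B_{1/2}\cap\Omega_0\cap\{x_d\ge\tfrac{1}{20}\}$ the Harnack step gives $\psi\ge(c_0-\tau)\eps\lambda\ge0$; on the remaining ``low'' lateral part the boundary-Harnack bound, combined with the inequality $x_d+a\le(x_d+b)+(a-b)\le(x_d+b)+\tfrac{1}{50}$, yields $\psi\ge0$ once $\tau$ is small (with the thin sliver $\{-a<x_d\le-b\}\cap\Omega_0\setminus\mathcal H$ handled by one further short Harnack chain inside $\Omega_0$ back to a nearby point of $\mathcal H$). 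The maximum principle then gives $\psi\ge0$ on $\Omega_0\cap B_{1/4}$. The main difficulty in the argument is precisely the propagation of the interior Harnack bound $h\gtrsim\eps\lambda$ all the way to the a priori rough free boundary of $v$: the lower bound on $v$ is exactly what makes this work, forcing $\Omega_0$ to contain a smooth Lipschitz half-slab on which boundary Harnack applies in the usual form.
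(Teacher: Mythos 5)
Your opening step (Harnack applied to the non-negative harmonic function $\lambda(x_d+a)-v$ near $P$, giving an $\eps$-improvement on a small ball around $P$) is exactly the paper's, and your reduction to the bound $h\ge\tau\eps\lambda(x_d+a)^+$ is correct. The gap is in how you propagate this improvement down to the free boundary, specifically on the ``low'' lateral part of $\partial\bigl(\Omega_0\cap B_{\sfrac12}\bigr)$. The boundary-Harnack estimate you derive on the half-slab $\mathcal H=\{x_d>-b\}$ gives $h\ge c_1\eps\lambda\,(x_d+b)^+$, a quantity that vanishes on $\{x_d=-b\}$, whereas the quantity you must dominate, $\tau\eps\lambda(x_d+a)$, equals $\tau\eps\lambda(a-b)>0$ there and throughout the sliver $\{-a<x_d\le -b\}$. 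Hence $\psi=h-\tau\eps\lambda(x_d+a)\ge (c_1-\tau)\eps\lambda(x_d+b)-\tau\eps\lambda(a-b)$ is \emph{not} non-negative near and below $\{x_d=-b\}$ for any choice of small $\tau$: shrinking $\tau$ shrinks the negative term but never removes it. Your proposed patch --- a short Harnack chain inside $\Omega_0$ from a sliver point back to $\mathcal H$ --- has no uniform constant, because $\partial\Omega_0$ in the sliver is the a priori completely irregular free boundary of $v$ and points of $\partial B_{\sfrac12}$ in the sliver may be arbitrarily close to it. Worse, the inequality you need on that boundary piece can genuinely fail: the hypothesis only constrains $v$ at the single point $P$, and nothing prevents $v$ from being as large as $\lambda(x_d+a)^+$ at sliver points of $\partial B_{\sfrac12}$ (the conclusion of the lemma is only claimed in $B_{\sfrac14}$). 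So the maximum principle on $\Omega_0\cap B_{\sfrac12}$ cannot be closed as written.

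The fix is to run the comparison on $\{x_d>-a\}$ rather than on $\Omega_0$ or $\mathcal H$, which is what the paper does. Since $0\le v\le\lambda(x_d+a)^+$, the function $v$ vanishes identically on $\{x_d\le -a\}$ and is subharmonic in $B_1$ (continuous, non-negative, harmonic on its positivity set). One then compares $v$ with the harmonic function $w$ on $\bigl(B_1\cap\{x_d>-a\}\bigr)\setminus B_{\sfrac18}(P)$ whose boundary values are $\lambda(x_d+a)^+$ on $\partial B_1$, $0$ on $\{x_d=-a\}$, and the Harnack-improved value $\lambda(1-c\eps)(x_d+a)^+$ on $\partial B_{\sfrac18}(P)$. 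The maximum principle gives $v\le w$, and the Hopf lemma applied to $\lambda(x_d+a)^+-w$ (non-negative, harmonic, vanishing on $\{x_d=-a\}$, bounded below by $c'\eps\lambda$ on the inner sphere) yields $w\le\lambda(1-\tau\eps)(x_d+a)^+$ on $B_{\sfrac14}$. This barrier vanishes on $\{x_d=-a\}$ at exactly the linear rate $(x_d+a)$ you need, and the free boundary never enters the argument; the lower bound $v\ge\lambda(x_d+b)^+$ is only used to guarantee that $v$ is positive, hence harmonic, near $P$ so that Harnack applies there.
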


\begin{proof}
We  prove only the first implication, since the second one can be obtained by the same arguments.  First, we notice that, since \(b \le \sfrac{1}{100}\),  both \(v\) and  \(\lambda(x_d+a)_+\) are positive and  harmonic in \(B_{\sfrac{1}{4}}(P)\). Thus, 
\[
\lambda(x_d+a)^+-v\ge 0\quad \text{in} \quad B_{\sfrac{1}{4}}(P)
\]
and 
\[
\lambda\Bigl(\frac{1}{2}+a\Bigr)^+-v(P)\ge \lambda \eps\Bigl(\frac{1}{2}+a\Bigr)^+\ge \frac{49}{100} \lambda \eps
\]
Hence, by Harnack inequality and the bound \(\abs{a}\le \sfrac{1}{100}\) there are dimensional constants \(\bar c\) and \(c\) such that 
\[
v(x) \le \lambda \bigl(x_d+a\bigr)^+-\lambda \bar c  \eps \le  \lambda (1-c\eps)\bigl(x_d+a\bigr)^+ \qquad \text{for all \( x \in  B_{\sfrac{1}{8}}(P)\).}
\]
 We now let \(w\) be the solution of the following problem
\[
\begin{cases}
\Delta w=0\qquad &\text{in \(B_{1}(0)\setminus B_{\sfrac{1}{8}}(P)\cap \{x_d > -a\}\)}
\\
w=0 &\text{on \(B_{1}\cap \{x_d = -a\}\)}
\\
w= \lambda \bigl(x_d+a\bigr)^+ &\text{on \(\partial B_{1}(0)\cap \{x_d > -a\}\)}
\\
w=\lambda (1-c\eps\bigr)\bigl(x_d+a\bigr)^+ &\text{on \(\partial B_{\sfrac{1}{8}}(P)\cap \{x_d > -a\}\).}
\end{cases}
\]
By the Hopf Boundary Lemma, 
\[
 w(x)\le (1-\tau\eps)(x_d+a)^+ \quad \text{for every $x$ in}\quad  B_{\sfrac{1}{4}}\cap \{x_d > -a\},
\]
for a suitable  constant \(\tau=\tau(d)\). Since, by the comparison principle, \(u\le w\), this concludes the proof.
\end{proof}

We next prove the two partial Harnack inequalities:

We distinguish two cases:

\smallskip

The proof of the partial Harnack inequality is based on comparison with suitable test functions. In order to build these ``barriers", we will often use the following function $\varphi$. Let \(Q=( 0, \dots, 0, \sfrac{1}{5})\) and we let \(\varphi: B_1\to R\) be defined by:
\begin{equation}\label{e:phi}
\varphi (x)=
\begin{cases}1&\text{ if \(x\in B_{\sfrac{1}{100}}(Q),\)}\\
\kappa_d \Big(|x-Q|^{-d}-\big(\sfrac34\big)^{-d}\Big) 
& \text{ if \(x\in  B_{\sfrac{3}{4}}(Q)\setminus \overline B_{\sfrac{1}{100}}(Q)\),}\\
0 &\text{ otherwise},
\end{cases}
\end{equation}
where the dimensional constant $\kappa_d$ is chosen in such a way that  \(\varphi \) is continuous. 

It is immediate to check that \(\varphi\) has the following properties:
\begin{enumerate}[label=($\varphi$.\arabic*)]
\item \label{i:P1} \(0\le \varphi\le 1\) in $\R^d$, and \(\varphi=0\) on \(\partial B_1\);
\item \label{i:P2} \(\Delta \varphi \ge c_d>0\) in  \(\{\varphi>0\}\setminus \overline B_{\sfrac{1}{100}}(Q)\);
\item \label{i:P3} \(\partial_d \varphi  >0 \) in \(\{\varphi>0\}\cap \{\abs{x_d}\le \sfrac{1}{100}\}\);
\item \label{i:P4} \( \varphi  \ge c_d>0\) in \(B_{\sfrac{1}{6}}\).
\end{enumerate}
where \(c_d\) is a dimensional constant.

\begin{lemma}[Partial Boundary Harnack I]\label{lm:partialharnack2} Given \(\lambda_+\ge \lambda_->0\) there exist constants $\bar\eps=\bar \eps(d, \lambda_{\pm})>0$ and $\bar{c}=\bar{c}(d, \lambda_{\pm})\in(0,1)$  such that, for every function  $u:B_4\to \R$ satisfying (a), (c) and (d) in \cref{t:eps_regularity},  the following property holds true.

Let   $a_{\pm}, b_{\pm}\in \bigl(-\sfrac{1}{100},\sfrac{1}{100}\bigr)$ be such that
\[
b_-\le a_-, \qquad b_+\le a_+,\qquad  b_-\le b_+,\qquad a_-\le a_+,
\]
and
\[
(a_--b_-)+( a_+-b_+)\le \bar \eps.
\]
Assume that for  \(x \in B_4\):
\[
\lambda_{+}\bigl(x_d+ b_+\bigr)^+\le u^+(x)\le  \lambda_{+} \bigl(x_d+ a_+\bigr)^+.
\]
and
\[
-\lambda_{-}\bigl(x_d+ b_-\bigr)^-\le -u^-(x)\le - \lambda_{-}\bigl(x_d+ a_-\bigr)^-
\]
Then, one can find new constants $\bar a_{\pm}, \bar b_{\pm}\in \bigl(-\sfrac{1}{100},\sfrac{1}{100}\bigr)$, with 
\[
\bar b_-\le \bar  a_-, \qquad \bar  b_+\le \bar  a_+,\qquad  \bar  b_-\le \bar  b_+,\qquad \bar  a_-\le \bar  a_+,
\]
and
\[
\bar  a_--\bar  b_-\le \bar c (a_--b_-)  \qquad \bar  a_+-\bar  b_+\le  \bar c (a_+-b_+)
\]
such that for \(x \in B_{\sfrac{1}{6}}\):
\[
\lambda_{+}\bigl(x_d+\bar b_+\bigr)^+\le u^+(x)\le  \lambda_{+} \bigl(x_d+\bar a_+\bigr)^+.
\]
and
\[
-\lambda_{-}\bigl(x_d+\bar b_-\bigr)^-\le -u^-(x)\le - \lambda_{-}\bigl(x_d+\bar a_-\bigr)^-
\]
\end{lemma}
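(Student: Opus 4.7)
The plan is to run the standard partial Harnack / sliding barrier argument of De Silva \cite{desilva} and De Silva-Ferrari-Salsa \cite{dfs}, adapted to the branching two-phase setting by treating the positive and negative phases either independently or in a coupled way, depending on the geometry of the barrier pair. The cutoff $\varphi$ of \eqref{e:phi} will be the engine of the slide, and \cref{l:optimality} together with \cref{l:NY} will produce contradictions at free-boundary touching points.

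First, at the test points $Q_\pm=(0,\dots,0,\pm\tfrac{1}{5})$, which lie inside $\Omega_u^\pm$ at a definite distance from the free boundary since $|a_\pm|,|b_\pm|<\tfrac{1}{100}$, I would dichotomize $u^\pm(Q_\pm)$ against the midpoint of its trapping range. Applying the interior Harnack inequality to the nonnegative harmonic function given by the gap between the favored barrier and $u^\pm$ (for instance $\lambda_+(x_d+a_+)^+-u^+$ when $u^+(Q_+)$ is closer to the upper barrier) then yields a pointwise universal excess of order $a_\pm-b_\pm$ on $B_{1/100}(Q_\pm)$.

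Next, I would construct and slide the perturbed barrier. For the upper bound on $u^+$ one sets
\[
w_t(x):=\lambda_+(x_d+a_+)^+-t(a_+-b_+)\varphi(x),\qquad t\ge 0,
\]
with $\varphi$ centered at $Q_+$, and defines $t^*:=\sup\{t\ge 0:w_t\ge u^+\text{ in }\overline{B_1}\}$. The claim is $t^*\ge \bar\tau$ for a universal $\bar\tau=\bar\tau(d,\lambda_\pm)$; once this is known, property \ref{i:P4} converts the push into a universal $\bar c$-improvement of the trap on $B_{1/6}$, yielding the new constant $\bar a_+$. To ensure the monotonicities $\bar b_-\le \bar b_+$ and $\bar a_-\le \bar a_+$ survive, I would distinguish two regimes. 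When $b_+-b_-$ (resp.\ $a_+-a_-$) dominates $\max\{a_\pm-b_\pm\}$ by a universal constant (the \emph{separated} regime), the two phases are at definite distance and the independent scalar slides as above never collide. When instead $b_+-b_-$ is small compared to $a_\pm-b_\pm$ (the \emph{two-plane} regime), the lower barrier is close to a genuine two-plane solution and I would switch to a coupled slide
\[
W_t(x):=\lambda_+\bigl(x_d+b_++t\varphi(x)\bigr)^+-\lambda_-\bigl(x_d+b_-+t\varphi(x)\bigr)^-,
\]
which preserves $\bar b_+-\bar b_-=b_+-b_-$ automatically; a symmetric dichotomy applies to the upper barriers.

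The core step is the touching analysis in the case $t^*<\bar\tau$. At the touching point $x_0$ I would rule out each possibility: interior touching $x_0\in\Omega_u^\pm$ is excluded by the strong minimum principle combined with \ref{i:P2}, which gives $\Delta w_{t^*}>0$ in $\{\varphi>0\}\setminus \overline{B_{1/100}(Q_\pm)}$; touching inside $B_{1/100}(Q_\pm)$ is excluded by the strict inequality from the Harnack step provided $\bar\tau\le c$; touching at a one-phase point $x_0\in\Gamma_{\text{\sc op}}^\pm$ is excluded by \cref{l:NY} applied to the strict gradient inequality $|\nabla w_{t^*}^\pm|(x_0)\neq\lambda_\pm$ that follows from \ref{i:P3}; and touching at a two-phase point $x_0\in\Gamma_{\text{\sc tp}}$ is excluded by forming a two-phase competitor, for instance $\widetilde Q(x):=w_{t^*}(x)-\lambda_-(x_d+b_-)^-$, which touches $u$ at $x_0$, and then invoking (A.3) or (B.3) of \cref{l:optimality}: the strict algebra provided by \ref{i:P3} contradicts the two-phase balance $\lambda_+^2-\lambda_-^2$. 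The hard part will be this two-phase touching case: it requires assembling the correct two-phase competitor from the one-sided sliding barrier and the opposite-phase trap and carefully tracking the inequality through the perturbation $\varphi$ so that the two-phase viscosity condition provides a strict contradiction; the regime split between \emph{separated} and \emph{two-plane} geometries is designed precisely so that such a competitor is available in every configuration.
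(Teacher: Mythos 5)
Your architecture follows the paper's: dichotomize at a test point, run a Harnack step, slide a $\varphi$-perturbed barrier, and split into a \emph{separated} and a \emph{two-plane} regime according to the size of $b_+-b_-$ relative to $a_\pm-b_\pm$. The decoupled slides and the separated regime are sound, since there the touching point is forced onto $\Gamma_{\textsc{op}}^\pm$ (or \cref{l:NY} applies directly) and the sign of $\partial_d\varphi$ supplies strictness.

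The gap is the two-phase touching case of the coupled slide --- exactly the step you flag as hard. Your coupled barrier $W_t$ keeps the slopes $\lambda_+,\lambda_-$ unchanged, so at $x_0\in\Gamma_{\text{\sc tp}}$ the quantity controlled by (A.3)/(B.3) of \cref{l:optimality} is
\[
|\nabla W_t^+|^2-|\nabla W_t^-|^2=\bigl(\lambda_+^2-\lambda_-^2\bigr)\,\bigl|\bo{e}_d+t\nabla\varphi\bigr|^2=\bigl(\lambda_+^2-\lambda_-^2\bigr)\bigl(1+2t\,\partial_d\varphi+O(t^2)\bigr),
\]
whose excess over $\lambda_+^2-\lambda_-^2$ is proportional to $\lambda_+^2-\lambda_-^2$ itself and vanishes identically when $\lambda_+=\lambda_-$, a case the lemma must cover. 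Relatedly, none of your barriers is strict at $t^*=0$: at $t=0$ they coincide with the hypothesis barriers, which are allowed to touch $u$, so the conclusion $t^*\ge\bar\tau$ is not justified. This is precisely the obstruction the paper isolates: before sliding, one must \emph{increase the slope} of the positive part of the barrier by a factor $1+\tau\eps$ (\cref{lm:improvement}, Harnack plus the Hopf lemma applied in a slab), so that the coupled barrier reads $\lambda_+(1+\tau\eps/2)(\cdot)^+-\lambda_-(1-c_1\eta\eps)(\cdot)^-$ and the two-phase balance is violated by the term $\lambda_+^2\tau\eps$, independently of $\partial_d\varphi$, of $t^*$, and of $\lambda_+-\lambda_-$ (provided $\eta\ll\tau$). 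Your Harnack step only records a pointwise excess near $Q_\pm$, which secures room for the slide near the center of $\varphi$ but does not deliver the slope gain; without it the argument cannot be closed. A smaller point: as written, $W_t$ has overlapping positive and negative supports whenever $b_+>b_-$, so it is not an admissible comparison function; the paper centers both pieces on $\{x_d+b_++c_2t\varphi=0\}$ and compensates by the factor $1-c_1\eta\eps$ on the negative slope.
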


\begin{proof}
Let us show how to improve the positive part. More precisely we show how given \(a_+, a_-, b_+, b_-\) as in the statement we can find \(\bar a_+\) and \(\bar b_+\). The proof for \(\bar b_-\) and \(bar a_-\) works in the same way and is left to the reader.

We let 
\[
P=( 0, \dots, 0,  2)
\]
 and we distinguish two  cases:

\medskip
\noindent
\(\bullet\) \emph{Case 1. Improvement from above.} Assume that, at the point $P$, $u^+$ is closer to  $\lambda_+(2+b_+)^+$ than to the upper barrier $\lambda_+(2+a_+)^+$. Precisely  that 
\[
u^+(P)\le \lambda_+(2+a_+)^+-\frac{\lambda_+ (a_+-b_+)}{2}.
\]
In this case, we will show that \(u\) is below  \(\lambda_+(x+\bar a_+)^{+}\) in a smaller ball centered at the origin  for \(\bar a_+\) strictly smaller than \(a_+\). 


We start by setting    
\[
\eps:=a_+-b_+ \le \bar \eps.
\]
Then
\[
u^+(P)\le\lambda_+(2+a_+)^+-\frac{\lambda \eps}{2} \le  \lambda_+(1- c \eps) (2+a_+)^+
\]
for a suitable (universal)  constant \(c\). We can thus apply  (the scaled version of)  \cref{lm:improvement} to \(u^+\), to infer the existence of  a dimensional  constant \(\tau\) such that 
\begin{equation}\label{e:inizio}
u^+\le \lambda_+(1-\tau \eps)\bigl(x_d+a_+\bigr)^+\quad \text{in}\quad B_{1}.
\end{equation}
For \(\varphi\) as in \eqref{e:phi} and \(t\in[0,1]\) we set 
\[
f_t=\lambda_+(1-\tau {\eps}/2)\bigl(x_d+a_+-t c\varepsilon \varphi\bigr)^+
\]
where \(c=c(d)\) is a small constant chosen such that for all \(x\in B_{\sfrac{1}{100}}(Q)\) and \( t\in [0,1)\),
\begin{equation}\label{e:okinb}
\begin{split}
u(x)&\le \lambda_+(1-\tau \eps)\bigl(x_d+a_+\bigr)^+
\\
&\le  \lambda_+(1-\tau \eps/2)\bigl(x_d+a_+-c\eps\bigr)^+ <f_t(x)
\end{split}
\end{equation}
where we have used that \((x_d+a_+\bigr)\) is within two universal constant for for \(x\in B_{\sfrac{1}{100}}(Q)\).

We now let \(\bar t\in (0,1]\) the largest \(t\) such that \(f_t\ge u\) in \(B_1\) and we claim that \(\bar t=1\). Indeed assume that \(\bar t<1\), then there exists \(\bar x\in B_1\) such that 
\begin{equation}\label{e:tocca}
 u(x)-f_{\bar t}(x)\le u(\bar x)-f_{\bar t}(\bar x)=0\quad\text{for all}\quad x\in B_1. 
\end{equation}
Note that by \eqref{e:okinb}, \(\bar x \notin B_{\sfrac{1}{100}}(Q)\), while, by \ref{i:P1} and \eqref{e:inizio},   \(\bar x \in \{\varphi>0\}\). Moreover  \(\bar x \in \{f_{\bar t}=0\}\), indeed otherwise, by   \ref{i:P2},  \(\Delta f_{\bar t}(\bar x)<0\) and \(\Delta u(\bar x)=0\), a contradiction with \eqref{e:tocca}. Assume now  \(\bar x \in \{f_{\bar t}=0\}\), since \(u\) is a viscosity solution we get that, by \ref{i:P3}, 
\[
\lambda_+^2\le \abs{\nabla f_{\bar t}(\bar x)}^2= \lambda_+^2(1-\tau \eps/2)^2-2c \eps \bar t  \lambda_+ \partial_d \varphi(\bar x)+O(\eps^2)<\lambda^2_+
\]
provided \(\eps\le \bar \eps(d, \lambda_+)\ll 1\) (note that necessarily  \(u (\bar x)=0\) and thus \(\bar x \in \{|x_d|\le \sfrac{1}{100}\}\)). This  contradiction implies that \(\bar t=1\). Hence, by \ref{i:P4}, we get for all \(x\in B_{\sfrac{1}{6}}\).
\[
u(x)\le  \lambda_+(1-\tau \eps/2)\bigl(x_d+a_+-c\eps\varphi \bigr)^+\le \lambda_+\bigl(x_d+a_+-\bar c\eps\bigr)^+ 
\]
for a suitable dimensional constant \(\bar c\). Setting 
\[
\bar a_+=a_+-\bar c\eps,\qquad \bar b_+=b_+ 
\]
and recalling that \(\eps=(b_+-a_+)\) allows to conclude the proof in this case.

\medskip
\noindent
 \(\bullet\) \emph{Case 2. Improvement from below.} We now assume that, at the point $P$, $u^+$ is closer to $\lambda_+(2+a_+)^+$ than to  $\lambda_+(2+b_+)^+$. Hence, we have
\[
u^+(P)\ge \lambda_+(2+b_+)^++\frac{\lambda_+ (a_+-b_+)}{2}.
\]
and we set again 
\[
\eps:=a_+-b_+ \le \bar \eps.
\]
Arguing as in Case \(1\), by  \cref{lm:improvement}, there exists a dimensional constant \(\tau\) such that 
\begin{equation}\label{e:inizio1}
u^+\ge \lambda_+(1+\tau \eps)\bigl(x_d+b_+\bigr)^+\quad \text{in}\quad  B_{1}.
\end{equation}

We need now  to distinguish two further sub-cases:

\medskip
\noindent
 \(\bullet\) \emph{Case 2.1:} Suppose that 
 \[
 0\le b_+-b_-\le \eta \eps
 \]
 where \(\eta\ll \tau\) is a small  universal constant which we will choose at the end of the proof. In this case, for \(x\in B_1\),
 \begin{equation}\label{e:pronti}
 \begin{split}
 u&\ge \lambda_+(1+\tau \eps)\bigl(x_d+b_+\bigr)^+-\lambda_-(x_d+b_-\bigr)^-
 \\
 &\ge \lambda_+(1+\tau \eps)\bigl(x_d+b_+\bigr)^+-\lambda_-(1-c_1\eta\eps)\bigl(x_d+b_+\bigr)^-
 \end{split}
 \end{equation}
for a suitable universal constant \(c_1\). We now take \(\varphi\) as in \eqref{e:phi} and we set, for \(t\in [0,1]\),
\[
f_t(x)=\lambda_+(1+\tau \eps/2)\bigl(x_d+b_++c_2t\varphi\bigr)^+-\lambda_-(1-c_1\eta\eps)\bigl(x_d+b_++c_2t\varphi\bigr)^-.
\]
for a suitably small universal constant \(0<c_2\ll \tau\), chosen so that for all \(x \in B_{\sfrac{1}{100}}(Q)\): 
\[
\bigl(1+\tau \eps\bigr)\bigl(x_d+b_+\bigr)^+\ge \bigl(1+\tau \eps/2\bigr)\bigl(x_d+b_++c_2\eps \bigr)^+.
\]
This together with  \eqref{e:inizio1} implies that 
\begin{equation}\label{e:contr}
\begin{split}
u(x)\ge  \lambda_+(1+\tau \eps)\bigl(x_d+b_+\bigr)^+&\ge \lambda_+(1+\tau \eps/2)\bigl(x_d+b_++c_2 \bigr)^+
\\
&\ge f_1(x)\ge f_t(x)\qquad \text{for all \(x \in B_{\sfrac{1}{100}}(Q)\), \(t\in [0,1]\).}
\end{split}
\end{equation}
Furthermore  \(u\ge f_0\) in \(B_1\) thanks to \eqref{e:pronti}.

As in Case 1 we let \(\bar t\) the biggest \(t\) such that \(f_t\le u\) in \(B_1\) and \(\bar x\) the first contact point, so that 
\begin{equation*}
 u(x)-f_{\bar t}(x)\ge u(\bar x)-f_{\bar t}(\bar x)=0\qquad\text{for all \(x\in B_1\)}. 
\end{equation*}
 Since \(\Delta f_{\bar t}>0\)  on  \(\{f_t\ne 0\}\cap B_{\sfrac{1}{100}}(Q)\), as in Case 1, \(\bar x\) is a free boundary point. Moreover, since \(f_{\bar t}\) changes sign in a neighborhood of \(\bar x\):
\begin{align*}
&\text{either}&&\bar x \in \Gamma_{\textsc{op}}^+=\partial \Omega_u^+\setminus \partial \Omega_u^-,
\\
&\text{or }&&\bar x \in \Gamma_{\textsc{tp}}=\partial \Omega_u^+\cap \partial \Omega_u^-.
\end{align*}
In the first  case, by definition of viscosity solution and \ref{i:P3},
\[
\lambda_+^2\ge |\nabla f_{\bar t}^+(\bar x)|^2= \lambda_+^2(1+\tau \eps/2)^2+2c \eps \bar t  \lambda_+ \partial_d \varphi(\bar x)+O(\eps^2)>\lambda_+^2,
\]
a contradiction for \(\eps\ll1\). In the second case we have a contradiction as well, provided \(\eta \ll \tau\), since (recall also that \(\lambda_+\ge \lambda -\),  \eqref{e:ordine}):
\[
\begin{split}
\lambda_+^2-\lambda_-^2&\ge |\nabla f_{\bar t}^+|^2- |\nabla f_{\bar t}^-|^2
\\
&= \lambda_+^2(1+\tau \eps/2)^2-\lambda_-^2(1-c_1\eta \eps)^2+2c_2 \eps \bar t  (\lambda_+-\lambda_-) \partial_d \varphi(\bar x)+O(\eps^2)
\\
&>\lambda_+^2-\lambda_-^2
\end{split}
\]
provided \(\eta =\eta (d) \ll \tau\) and \(\eps \ll 1\) (only depending on \(d\) and \(\lambda_+\)). Hence, \(\bar t=1\),  \(u\ge f_1\) which  implies the desired conclusion by setting 
\[
\bar a_+=a_+,\qquad \bar b_+=b_+ +\bar c_2\eps
\]
and by recalling that \(\eps=(a_+-b_+)\).

\medskip
\noindent
 \(\bullet\) \emph{Case 2.2:} Assume instead that:
 \[
 b_+-b_-\ge \eta \eps
 \]
 where \(\eta=\eta(d)\) has been chosen according to Case 2.1. In this case we consider the family of functions 
 \[
 f_t(x)=\lambda_+(1+\tau \eps/2)\bigl(x_d+b_++\eta t\varphi\bigr)^+-\lambda_-\bigl(x_d+b_-)^-.
 \]
 Being  \(\varphi \le 1\), this is well defined since \(b_+\ge b_-+\eta\). Moreover \(u\ge f_0\) and,  thanks, to  \eqref{e:inizio1} and   by possibly choosing  \(\eta\) smaller depending only on the dimension, 
 \[
 u(x)\ge f_{1}(x)\ge  f_t(x) \qquad \text{for all \(x \in B_{\sfrac{1}{100}}(Q)\), \(t\in [0,1]\).}
 \] 
  We consider again the first touching time  \(\bar t\) and the first touching point \(\bar x\). Note that this can not happen where \(u\ne 0\). Moreover, by the very definition of  \(f_{\bar t}\), \(\bar x \in  \partial \Omega^+_u\setminus \partial \Omega_u^-\). However, again by arguing as in Case 2.1, this is in contradiction with \(u\) being a  viscosity solution. We now conclude as in the previous cases.
  
Since  either the assumption of Case \(1\) or the one of Case \(2\) is always satisfied, this concludes the proof.
\end{proof}

The next lemma deals with the case in which the origin is not a branching point.

\begin{lemma}[Partial Boundary Harnack II]\label{lm:partialharnack1}
Given \(L\ge \lambda_+\ge \lambda_->0\) there exist constants $\bar\eps=\bar \eps(d, \lambda_{\pm},L)>0$, \(M=M(d, \lambda_{\pm},L)\) and  $c=c(d, \lambda_{\pm},L)\in(0,1)$  such that for every function $u:B_4\to \R$ satisfying (a), (c) and (d) in \cref{t:eps_regularity}  the following property holds true. 
If there are constants  \(a,b \in \bigl(-\sfrac{1}{100}, \sfrac{1}{100}\bigr)\) with 
\[
0\le a-b\le \bar \eps
\] 
such that for \(x \in B_4\)
\[
H_{\alpha, \bo{e}_d}(x+b \bo{e}_d)\le u(x)\le H_{\alpha, \bo{e}_d}(x+a \bo{e}_d)
\]
and
\[
\lambda_++M\eps \le \alpha \le 2L,
\]
 then there are  constants  \(\bar a,\bar b \in \bigl(-\sfrac{1}{100}, \sfrac{1}{100}\bigr)\) with 
 \[
 0\le \bar b-\bar a\le c (b-a)
 \]
such that  for \(x \in B_{\sfrac{1}{6}}\)
\[
H_{\alpha, \bo{e}_d}(x+\bar b \bo{e}_d)\le u(x)\le H_{\alpha, \bo{e}_d}(x+\bar a \bo{e}_d).
\]
\end{lemma}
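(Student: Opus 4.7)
The strategy parallels that of \cref{lm:partialharnack2}, simplified by working with a single two-plane barrier, with the new ingredient being that the gap $\alpha \geq \lambda_+ + M\varepsilon$ forces $\beta$ strictly above $\lambda_-$ by a quantitative margin, which is what allows us to exclude one-phase contact points. Set $\varepsilon := a-b$, pick the reference point $P := (0,\ldots,0,2)$, and suppose without loss of generality that $u(P)$ is closer to the lower barrier, namely $H_{\alpha,\bo{e}_d}(P+a\bo{e}_d) - u(P) \geq c_1 \alpha \varepsilon$; the opposite case yields an improvement of $b$ by a symmetric argument. On the set $\{x_d + b \geq c_0\}\cap B_3$, where both $u$ and $H_{\alpha,\bo{e}_d}(\cdot + a\bo{e}_d)$ are positive and harmonic, the function $w := \alpha(x_d+a) - u$ is nonnegative and harmonic, so Harnack's inequality upgrades the gap at $P$ to $w \geq c_2 \varepsilon$ on $B_{\sfrac{1}{100}}(Q)$, where $Q$ is the point from \eqref{e:phi}.

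Next, introduce the sliding family
$$
f_t(x) := \alpha\bigl(x_d + a - tc_3\varepsilon\varphi(x)\bigr)^+ - \beta\bigl(x_d + a - tc_3\varepsilon\varphi(x)\bigr)^-, \qquad t \in [0,1],
$$
with $c_3 = c_3(d) \ll 1$ fixed so that $f_t \geq u$ on $\overline{B_{\sfrac{1}{100}}(Q)}$ for every $t\in[0,1]$ (thanks to the Harnack gap), while $f_0 \geq H_{\alpha,\bo{e}_d}(\cdot + a\bo{e}_d) \geq u$ throughout $B_1$. Let $\bar t := \sup\{t\in[0,1] : f_t \geq u \text{ on }B_1\}$ with contact point $\bar x$; the lemma reduces to showing $\bar t = 1$, from which \ref{i:P4} yields $u \leq H_{\alpha,\bo{e}_d}(\cdot + (a - \bar c\varepsilon)\bo{e}_d)$ on $B_{\sfrac{1}{6}}$, and then $\bar a := a - \bar c\varepsilon$, $\bar b := b$ satisfies the claim.

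Suppose for contradiction $\bar t < 1$. The Harnack step forces $\bar x \notin B_{\sfrac{1}{100}}(Q)$, while by \ref{i:P2} $f_{\bar t}$ is strictly superharmonic on its positive and negative phases outside that ball; the strong maximum principle then places $\bar x$ on the free boundary $\{x_d + a = \bar tc_3\varepsilon\varphi\}$ of $f_{\bar t}$. Since $f_{\bar t}$ changes sign at $\bar x$ and $u \leq f_{\bar t}$, one has $\bar x \in \partial \Omega_u^-$, and two sub-cases remain:
\begin{itemize}
\item $\bar x \in \Gamma_{\textsc{tp}}$: by (B.3) of \cref{l:optimality}, $|\nabla f_{\bar t}^+(\bar x)|^2 - |\nabla f_{\bar t}^-(\bar x)|^2 \geq \lambda_+^2 - \lambda_-^2$; expanding, using $\alpha^2 - \beta^2 = \lambda_+^2 - \lambda_-^2$ and $\partial_d\varphi(\bar x) > 0$ from \ref{i:P3}, yields a contradiction at first order in $\varepsilon$.
\item $\bar x \in \Gamma_{\textsc{op}}^-$: then $u \leq 0$ near $\bar x$, so $f_{\bar t}$ touches $-u^-$ from above, and \cref{l:NY}(ii) gives $\beta\bigl(1 - \bar t c_3\varepsilon \partial_d\varphi(\bar x)\bigr) + O(\varepsilon^2) \leq \lambda_-$; however $\beta^2 - \lambda_-^2 = \alpha^2 - \lambda_+^2 \geq 2\lambda_+ M \varepsilon$ forces $\beta \geq \lambda_- + c_* M \varepsilon$, contradicting the previous inequality once $M = M(d,\lambda_\pm,L)$ is chosen large enough.
\end{itemize}

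The main obstacle is the one-phase case $\bar x \in \Gamma_{\textsc{op}}^-$: here the barrier contradiction requires an $O(\varepsilon)$ margin between $\beta$ and $\lambda_-$ that dominates the first-order perturbation produced by $\varphi$, and this margin is available precisely thanks to the hypothesis $\alpha \geq \lambda_+ + M\varepsilon$. Without such a margin—e.g.\ when $\alpha$ is exactly $\lambda_+$—one must instead slide the positive and negative phases independently, which is the reason two separate pairs $(a_\pm,b_\pm)$ are introduced in \cref{lm:partialharnack2}.
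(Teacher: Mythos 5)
Your overall strategy (sliding a perturbed two-plane barrier, locating the contact point on the free boundary, and splitting into a two-phase and a one-phase sub-case) is the same as the paper's, and your treatment of the one-phase sub-case via the margin $\beta\ge\lambda_-+c_*M\eps$ is exactly right. However, there is a genuine gap in the two-phase sub-case. Your sliding family
\[
f_t(x)=\alpha\bigl(x_d+a-tc_3\eps\varphi\bigr)^+-\beta\bigl(x_d+a-tc_3\eps\varphi\bigr)^-
\]
keeps the slopes $\alpha,\beta$ unchanged, so at a contact point $\bar x\in\Gamma_{\textsc{tp}}$ one computes
\[
|\nabla f_{\bar t}^+(\bar x)|^2-|\nabla f_{\bar t}^-(\bar x)|^2=(\alpha^2-\beta^2)\bigl(1-2\bar t c_3\eps\,\partial_d\varphi(\bar x)\bigr)+O(\eps^2)=(\lambda_+^2-\lambda_-^2)\bigl(1-2\bar t c_3\eps\,\partial_d\varphi(\bar x)\bigr)+O(\eps^2),
\]
and the ``contradiction at first order in $\eps$'' you invoke has coefficient $\bar t\,(\lambda_+^2-\lambda_-^2)\,\partial_d\varphi(\bar x)$. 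This vanishes identically when $\lambda_+=\lambda_-$ (which is allowed, and forces $\alpha=\beta$ no matter how large $M$ is), and even when $\lambda_+>\lambda_-$ it is not bounded below, since $\bar t$ may be arbitrarily small and $\partial_d\varphi(\bar x)$ degenerates near $\partial\{\varphi>0\}$. So (B.3) is simply not violated and the argument stalls. This is precisely the degeneracy the paper flags before \cref{lm:improvement}.

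The missing ingredient is the slope-improvement step: instead of feeding the Harnack gap $w\ge c_2\eps$ on $B_{\sfrac{1}{100}}(Q)$ directly into the sliding family, one must first convert it (via \cref{lm:improvement}, i.e.\ Harnack plus the Hopf lemma for the auxiliary function $w$) into the global bound $u\le \alpha(1-\tau\eps)(x_d+a)^+-\beta(x_d+a)^-$ in $B_1$, and then slide the barrier with \emph{reduced} positive slope, $f_t=\alpha(1-\tau\eps/2)(x_d+a-ct\eps\varphi)^+-\beta(x_d+a-ct\eps\varphi)^-$. At a two-phase contact point this produces the unconditional first-order term
\[
\alpha^2(1-\tau\eps/2)^2-\beta^2=\lambda_+^2-\lambda_-^2-\alpha^2\tau\eps+O(\eps^2),
\]
while the $\varphi$-contribution $-2c\bar t\eps(\alpha^2-\beta^2)\partial_d\varphi(\bar x)$ only needs to be shown non-positive (which follows from \ref{i:P3} and $\alpha\ge\beta$), rather than strictly negative. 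With that modification the rest of your argument, including the exclusion of $\Gamma_{\textsc{op}}^+$ by the sign change of $f_{\bar t}$ and the use of $M$ large to rule out $\Gamma_{\textsc{op}}^-$, goes through as in the paper.
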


\begin{proof} 
 We consider the point \(P=( 0, \dots, 0, 2)\) and we distinguish the two  cases (note that one of the two is always satisfied):
\begin{align*}
\text{either}& &H_{\alpha, \bo{e}_d}\Bigl(P+b \bo{e}_d\Bigr)+\frac{\alpha (a-b)}{2} \le &u(P),
\\
\text{or}&  &H_{\alpha, \bo{e}_d}\Bigl(P+a \bo{e}_d\Bigr)-\frac{\alpha (a-b)}{2} \ge &u(P).
\end{align*}
Since the argument in both cases is completely symmetric we only consider the second one. If we set 
\[
\eps=(a-b),
\]
 by \cref{lm:improvement}  and by arguing as in \cref{lm:partialharnack2} we deduce the existence of a dimensional constant \(\tau\) such that 
\[
u\le \alpha(1-\tau \eps) \bigl (x_d+a\bigr)^+-\beta \bigl (x_d+a\bigr)^-
\]
in \(B_1\). We let \(\varphi\) as in \eqref{e:phi} and we set 
\[
f_t(x)=\alpha(1-\tau \eps/2) \bigl (x_d+a-ct \varphi\bigr)^+-\beta \bigl (x_d+a-ct \varphi\bigr)^-
\]
 where \(c\) is a dimensional constant chosen such that 
 \[
 u(x)\le f_{1}(x)\le  f_t(x) \qquad \text{for all \(x \in B_{\sfrac{1}{100}}(Q)\), \(t\in [0,1]\).}
 \] 
 where, again, \(Q=( 0, \dots, 0, \sfrac{1}{5})\). As in \cref{lm:partialharnack2} we let \(\bar t\) and \(\bar x\) be the first contact time and the first contact point and we aim to show that \(\bar t=1\). For, we note that, by the same arguments as in \cref{lm:partialharnack2},  necessarily \(\bar x \in \{u=0\}\). We claim that 
 \[
 \bar x \in \Gamma_{\textsc{tp}}=\partial \Omega_u^+\cap \partial \Omega_u^-.
 \]
 Indeed otherwise \(\bar x \in \partial \Omega^{-}_{u}\setminus \partial \Omega^{+}_{u}\), the case  \(\bar x \in \partial \Omega^{+}_{u}\setminus \partial \Omega^{-}_{u}\) being impossible since \(f_{\bar t}\) is negative in a neighborhood of \(\bar x\). By definition of viscosity solution this would imply
\begin{equation}\label{e:caldo}
\lambda_{-}^2 \ge |\nabla f_{\bar t}^-(\bar x)|^2=\beta^2-O(\eps)\ge \lambda_{-}^2 +2M\lambda_+ \eps -O (\eps) 
\end{equation}
where the implicit constants in \(O(\eps)\) depends on \(\lambda_{\pm}\), \(L\) and \(d\) and we exploited that, since  \(\alpha \ge \lambda_++M\eps\), 
\[
\beta^2=\alpha^2-\lambda_+^2+\lambda_-^2\ge \lambda_-^2+2M\lambda_+ \eps.
\]
Inequality \eqref{e:caldo} is impossible if \(M\) is chosen sufficiently large. Hence \(\bar x \in \Omega^{-}_{u}\cap \partial \Omega^{+}_{u}\). This however implies:
\[
\begin{split}
\lambda_+^2-\lambda_{-}^2 &\le |\nabla f_{\bar t}^+(\bar x)|^2-|\nabla f_{\bar t}^-(\bar x)|^2
\\
&= \alpha^2(1-\tau \eps/2)^2-\beta^2-2c \bar t\eps(\alpha-\beta) \partial_d \varphi (\bar x)  +  O(\eps^2)
\\
&\le \lambda_+^2-\lambda_{-}^2-\alpha^2\tau  \eps+ O(\eps^2).
\end{split}
\]
where we have used \ref{i:P3}, the equality
\[
\lambda_+^2-\lambda_{-}^2=\alpha^2-\beta^2
\]
and that since \(\lambda_+\ge \Lambda_-\), \(\alpha\ge \beta\).  This is a contradiction  provided \(\bar \eps\) is chosen small enough. Hence \(\bar t=1\) and , as in 
\cref{lm:partialharnack2}, this concludes the proof.
\end{proof}

With \cref{lm:partialharnack1,lm:partialharnack2} at hand we can use the same arguments as in  \cite{desilva,dfs}  to prove \cref{cor:comp}.

\begin{proof}[\bf Proof of \cref{cor:comp}]
	We distinguish two cases:

\medskip

\noindent
\( \bo{0\le \ell<+\infty:}\) By triangular inequality we have
$$
\|u_k-H_{\lambda_+, \bo{e_d}}\|_{L^\infty(B_1)}\leq (2\ell+1) \eps_k 
$$
for $k$ sufficiently large. In particular we can repeatedly apply \cref{lm:partialharnack2} as in \cite{desilva}, see also \cite[Lemma 7.14 and Lemma 7.15]{notes} for a detailed proof, to deduce that if we define the sequence $(w_k)_k$ by
\begin{equation*}\label{e:wk}
w_k(x)=
\begin{cases}
w_{+,k}(x):=\dfrac{u_k(x)-\lambda_+x_d^+}{\alpha_k\eps_k}\qquad&x\in \Omega_{u_k}^+\cap B_1
\\
w_{-,k}(x):=\dfrac{u_k(x)+\lambda_- x_d^-}{\beta_k\eps_k}\qquad&x\in \Omega_{u_k}^-\cap B_1
\end{cases}
\end{equation*} 
then the sets
\[
\tilde\Gamma_k^\pm:=\Big\{(x, w_{\pm,k}(x))\ :\ x\in \overline{\Omega_{u_k}^\pm\cap B_{\sfrac12}}\Big\},
\]
converge, up to a not relabeled subsequence,  in the Hausdorff distance to the closed graphs
\[
\tilde \Gamma_\pm=\Big\{(x, w_\pm(x))\ :\ x\in \overline{B_{\sfrac12}^\pm}\Big\}.
\]
where $w\in C^{0,\alpha}$ for a suitable $\alpha$. Since 
$$
h_k(x):=\frac{H_{\alpha_k,\bo{e_d}}-H_{\lambda^+, \bo{e_d}}}{\eps_k} \to 
\begin{cases}
\lambda_+^{-1} \ell x_d & \text{if } x_d>0\\
\lambda_- ^{-1}\ell x_d & \text{if } x_d<0\,,
\end{cases} 
$$ 
the original sequence \(v_k\) satisfies that their graphs,
\[
\tilde \Gamma_\pm=\Big\{(x, v_\pm(x))\ :\ x\in \overline{B_{\sfrac12}^\pm}\Big\},
\]
converges to the graph of a  limiting function \(v\) as we wanted, this in particular proves (i),  (ii)  and (iii).

Since \(0\in \partial \Omega_{u_k}^+\cap \partial \Omega_{u_k}^-\) then \(0\) is in the domain of \(v_{\pm,k}\) and 
\[
v_{\pm,k}(0)=0
\]
which implies that \(v_{\pm}(0)=0\). To show that \(v_+(x)\le v_-(x)\) for  \(x=(x',0)\in \{x_d=0\} \cap B_{\sfrac{1}{2}}\) we simply exploit (iii) at the points \(x^\pm_k=(x', t_k^\pm)\) where 
\[
t^+_k=\sup\bigl\{t: (x',t)\in   \partial \Omega_{u_k}^+\bigr\}
\qquad \text{and} \qquad 
t^-_k=\inf\bigl\{t: (x',t) \in \partial \Omega_{u_k}^-\bigr\}
\]
and by noticing that \(-t_k^+ \le -t_k^-\). Finally to show the last claim it is enough to note that if \(x_k\in \partial \Omega_{u_k}^+\cap \partial \Omega_{u_k}^-\) is converging to \(x\)  then  \(v_{+,k}(x_k)=v_{-,k}(x_k)\) and thus \(v_{+}(x)=v_{-}(x)\), yielding \(x\in \mathcal C\).

\medskip

\noindent
\( \bo{\ell=\infty:}\)  In this case the conclusion follows exactly as in \cite{dfs} by using \cref{lm:partialharnack1} and noticing that its assumptions are satisfied since $\ell=\infty$.	
\end{proof}

\subsection{The linearized problem: proof of \cref{l:limitprob}}\label{sub:linearized_problem}

The following technical lemma is instrumental to the proof of \cref{l:limitprob}. We defer its proof to \cref{s:app} below.

\begin{lemma}\label{l:touching}
Let  $u_k$, \(\eps_k\)  and \(\alpha_k\) be as in the statement of \cref{cor:comp},  \(v_k\) be defined by \eqref{e:vk} and    \(v_\pm\) be as in  \cref{cor:comp}.  Then:

\begin{enumerate}[label=\textup{(\arabic*)}]
\item Let $P_+$ a strictly subharmonic (superharmonic) function on $B_{\sfrac12}^+$   touching
 $v_+$ strictly from below (above) at a point $x_0\in\{x_d=0\}\cap B_{\sfrac12}$. Then, there exists a sequence of points $\partial \Omega_{u_k}^+\ni x_k\to x_0$ and a sequence of comparison functions $Q_{k}$ such that $Q_{k}$  touches from below (above) $u^+_k$ at $x_k$, and such that 
\begin{equation}\label{e:sticazzi1}
\nabla Q_{k}^+(x_k)=\alpha_k\bo{e}_d+\alpha_k\eps_k\nabla P_+(x_0)+o(\eps_k).
\end{equation}
\item Let $P_-$ be a strictly subharmonic (superharmonic) function on $B_{\sfrac12}^-$ and  touching $v_-$ strictly  from below (above) at a point $x_0\in\{x_d=0\}\cap B_{\sfrac12}$. Then, there exists a sequence of points $\partial \Omega_{u_k}^-\ni x_k\to x_0$ and a sequence of comparison functions $Q_{k}$ such that $Q_{k}$ touches from below (above)  $-u^-_k$ at $x_k$, and such that 
\begin{equation}\label{e:sticazzi2}
\nabla Q_{k}^-(x_k)=-\beta_k\bo{e}_d+\beta_k\eps_k\nabla P_-(x_0)+o(\eps_k).
\end{equation}
\item Let  \(p,q\in \R\) and \(\tilde P\) be a function on   $B_{\sfrac12}$ such that \(\partial_d \tilde P=0\).  Suppose that \(\tilde{P}\) is subharmonic (superharmonic) and  that the function
\[
P:=px_{d}^{+}-qx_{d}^{-}+\tilde{P}
\]
touches \(v\) strictly from below (above) at a point \(x_0\in \mathcal C\). Then, there exists a sequence of points  $x_k\to x_0$ and  a sequence of comparison functions $Q_{k}$ such that $Q_{k}$ touches from below (above) the function $u_k$ at $x_k\in\partial\Omega_{u_k}$, and such that 
\begin{equation}\label{e:sticazzi3}
\begin{split}
\nabla Q_{k}^+(x_k)&=\alpha_k\bo{e}_d+\alpha_k\eps_kp+o(\eps_k)
\\
 \nabla Q^-_{k}(x_k)&=-\beta_k\bo{e}_d+\beta_k\eps_k q+o(\eps_k).
 \end{split}
\end{equation}
In particular, if  \(p>0\) and \(Q_{k}\) touches \(u_{k}\) from below then \(x_{k}\notin \partial \Omega_{u_{k}}^{-}\setminus \partial \Omega_{u_{k}}^{+}\), while if  \(q<0\) and $Q_k$ touches $u_k$ from above then \(x_{k}\notin \partial \Omega_{u_{k}}^{+}\setminus \partial \Omega_{u_{k}}^{-}\).
\end{enumerate}
\end{lemma}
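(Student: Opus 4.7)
The three statements are established by a common sliding-barrier scheme in the spirit of \cite{desilva,dfs}: build an admissible family of comparison functions $Q_{k,s}$ depending on a translation parameter $s$, slide $s$ until the first contact with $u_k^\pm$ (or $u_k$ in case (3)) occurs, and exploit the strict sub-/super-harmonicity of $P_\pm,\tilde P$ to force the contact onto the free boundary near $x_0$. I sketch the ``touching from below'' case of (1); part (2) is the symmetric statement applied to $-u_k^-$, and (3) is handled with a two-sided ansatz.

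\emph{Set-up.} Since $P_+$ is strictly subharmonic near $x_0$, say $\Delta P_+\ge 2\delta$, we may absorb a small quadratic $c|x-x_0|^2/(2d)$ into $P_+$ while preserving $\Delta P_+\ge \delta>0$; the strict touching with $v_+$ then upgrades to a quadratic gap
\[
P_+(x)\le v_+(x)-c\,|x-x_0|^2 \quad \text{on } \overline{B_r^+(x_0)},
\]
with equality only at $x_0$. Extend $P_+$ to a $C^2$ function $\bar P$ in a full neighborhood of $x_0$ in $\R^d$, and for $s\in\R$ set
\[
\psi_{k,s}(x):=x_d+\eps_k\bar P(x)-s,\qquad Q_{k,s}(x):=\alpha_k\,\psi_{k,s}(x)^+.
\]
For $\eps_k$ small, $|\nabla\psi_{k,s}|\ge 1/2$, so $\{\psi_{k,s}=0\}$ is a smooth hypersurface and $Q_{k,s}$ is an admissible comparison function which is strictly subharmonic in its positive set.

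\emph{Sliding and localization.} For $s$ large, $Q_{k,s}\equiv 0\le u_k^+$ on $\overline{B_r(x_0)}$, and we decrease $s$ to the first-contact value
\[
s_k:=\inf\{s:\,Q_{k,s'}\le u_k^+\text{ on }\overline{B_r(x_0)}\text{ for all }s'\ge s\},
\]
reached at some $x_k\in\overline{B_r(x_0)}$. Using the Hausdorff convergence of the rescaled graphs $\Gamma_k^+\to\Gamma_+$ from \cref{cor:comp}(ii), the quadratic gap implies, on $\overline{B_r(x_0)}\setminus B_{r/2}(x_0)$,
\[
u_k^+-Q_{k,0}=\alpha_k\eps_k\bigl(v_+-\bar P\bigr)+o(\eps_k)\ge \alpha_k\eps_k\,\tfrac{cr^2}{4}+o(\eps_k),
\]
confining $x_k$ to $\overline{B_{r/2}(x_0)}$ for large $k$; a further subsequence argument together with strict touching yields $x_k\to x_0$ and $s_k\to 0$. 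If $x_k\in\Omega_{u_k}^+$, then both $u_k^+$ and $Q_{k,s_k}$ would be positive and smooth there, and $u_k^+-Q_{k,s_k}\ge 0$ would attain an interior minimum at $x_k$; strict superharmonicity of this difference contradicts the strong maximum principle. Hence $x_k\in\partial\Omega_{u_k}^+\cap\partial\{Q_{k,s_k}>0\}$, and a direct computation gives
\[
\nabla Q_{k,s_k}^+(x_k)=\alpha_k\bigl(\bo{e}_d+\eps_k\nabla\bar P(x_k)\bigr)=\alpha_k\bo{e}_d+\alpha_k\eps_k\nabla P_+(x_0)+o(\eps_k),
\]
proving \eqref{e:sticazzi1}. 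The ``from above'' case is symmetric, sliding $s$ in the opposite direction and taking $P_+$ strictly superharmonic.

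\emph{Part (3) and main obstacle.} For (3) we use the two-sided ansatz
\[
Q_{k,s}(x):=\alpha_k(1+\eps_k p)\,\psi_{k,s}(x)^+-\beta_k(1+\eps_k q)\,\psi_{k,s}(x)^-,\quad \psi_{k,s}=x_d+\eps_k\bigl(\tilde P(x)-c|x-x_0|^2\bigr)-s,
\]
so that $\{Q_{k,s}>0\}$ and $\{Q_{k,s}<0\}$ share the single smooth free boundary $\{\psi_{k,s}=0\}$. Strict subharmonicity of $\tilde P$ makes $Q_{k,s}^+$ strictly subharmonic and $Q_{k,s}^-$ strictly superharmonic, so the strong maximum principle rules out interior contacts on both phases. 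The sign assumption $p>0$ is used to rule out first contact at a pure one-phase free boundary of the opposite sign: at $x_k\in\partial\Omega_{u_k}^-\setminus\partial\Omega_{u_k}^+$ one has $u_k\le 0$ in a neighborhood, while $\{Q_{k,s_k}>0\}$ contains a nonempty half-space-like region near $x_k$, contradicting $Q_{k,s_k}\le u_k$. The \emph{main obstacle} is the localization step: since $Q_{k,s}$ is only an $O(\eps_k)$ perturbation of the half-plane $\alpha_k x_d^+$, ordinary sliding does not by itself confine the first contact point near $x_0$. The remedy is the quadratic gap from strict touching coupled with the $\eps_k$-scale Hausdorff convergence of $\Gamma_k^\pm$ from \cref{cor:comp}, which together provide the needed $O(\eps_k)$ separation of $u_k$ from the barrier outside a fixed neighborhood of $x_0$.
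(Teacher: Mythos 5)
Your proof is correct in its essentials and runs on the same engine as the paper's: a family of comparison functions whose free boundary is an $\eps_k$-scale graph perturbation dictated by the test function, slid until first contact, with strict sub/superharmonicity forcing the contact onto $\partial\Omega_{u_k}^\pm$ and the strict touching plus the graph convergence of \cref{cor:comp} localizing the contact near $x_0$ and yielding the gradient expansion. The difference is in how the $Q_k$ are built: the paper obtains them implicitly as (the inverse of) the Kinderlehrer--Nirenberg change of variables $\bo{T}_\eps(x)=(x',x_d-\eps\alpha P(x))$ from \cite{KN}, then lowers by $\delta$, translates vertically until touching and sends $\delta\to0$; you write the barrier explicitly as $\alpha_k\bigl(x_d+\eps_k\bar P-s\bigr)^+$ and slide in $s$, which is more elementary (no inverse map) and, in part (3), your single-interface ansatz with both phases hanging on the one level set $\{\psi_{k,s}=0\}$ neatly sidesteps the paper's step of checking that the two separately transformed half-balls have disjoint images. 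One point of imprecision to fix when writing this up: the displayed estimate $u_k^+-Q_{k,0}=\alpha_k\eps_k(v_+-\bar P)+o(\eps_k)$ on the annulus is not literally valid in the strip $\{|x_d|\lesssim\eps_k\}$, where one of the two functions may vanish while the other is positive (so the difference can even be negative pointwise); there the $O(\eps_k)$ separation must be expressed as an ordering of the free boundaries, i.e.\ of the graphs of $v_{\pm,k}$ versus the graph of $\bar P$, which is exactly what \cref{cor:comp} (ii)--(iii) provides and what you invoke in your final paragraph --- the same phrasing also disposes of possible contact on $\partial B_r(x_0)$, while $x_k\to x_0$ and $s_k=o(\eps_k)$ follow by running the argument for a sequence of radii $r\downarrow 0$ and extracting a diagonal subsequence; finally, in the ``touching from above'' case the membership $x_k\in\partial\Omega^+_{u_k}$ should be deduced from approximating the first-contact point by points where the ordering fails (so $u_k^+>0$ there), since it no longer follows from positivity of the barrier near $x_k$.
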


\begin{proof}[Proof of \cref{l:limitprob}]
We note that \(v_k^\pm\) converge uniformly to \(v_\pm\) on every compact subset of \(\{\pm x_d>0\}\cap B_{\sfrac{1}{2}}\). Since these functions are harmonic there, by elliptic estimates the convergence is smooth and in particular \(v_{\pm}\) are harmonic on the (open) half balls \(B^\pm_{\sfrac{1}{2}}\). Hence we only have to check the boundary conditions on \(\{x_d=0\}\). We distinguish two cases. 

\medskip
\noindent$\bo{\ell=\infty}.$
In this case we first want to show that \(\mathcal J = \emptyset\). Assume not, since  the set $\{v_->v_+\}$ is open in \(\{x_d= 0\}\),  it contains a $(d-1)$-dimensional ball 
$$
B'_\eps(y'):=B_\eps((y',0))\cap \{x_d=0\}\subset \mathcal J\,.
$$
Next let $P$ be the polynomial 
\[
P(x)=A\big((d-\sfrac12)x_d^2-|x'-y'|^2\big)-Bx_d\,,\quad\text{where}\quad x=(x',x_d)\,,
\]
for some constants $A,B$. We first choose \(A\gg1 \) large enough so that  
\[
P<v^+\qquad \text{on} \quad \{|x'-y'|=\eps\}\cap \{x_d=0\} 
\]
and then we choose \(B\gg A\) so that 
 \[
 P<v^+ \quad \text{on} \quad B_\eps((y',0)).
 \]
Now we can translate  \(P\) first down and then up to find that there exists \(C\) such that \(P+C\) is touching \(v^+\) from below at a point \(x_0\in B_\eps((y',0)) \cap  \{x_d\ge 0\}\). Since \(\Delta P>0\), the touching point can not be in the interior of the (half) ball and thus \(x_0\in B'_\eps(y')\subset \mathcal J\).

By using \cref{l:touching}, there exists  a sequence of points $\partial \Omega_{u_k}^+\ni x_k\to x_0$ and of functions $Q_k$ touching $u^+_k$ from below at $x_k$ and such that 
\[
\nabla Q_k^+(x_k)=\alpha_k\bo{e}_d+\alpha_k\eps_k\nabla P(x_0)+o(\eps_k).
\]
Since \(x_0\in \mathcal J\), by  \eqref{e:nodp} in \cref{l:touching}, $x_k\in\partial\Omega_{u_k}^+\setminus\partial\Omega_{u_k}^-$. Hence, by (ii) in \cref{l:NY}
\[
\lambda_+^2\ge |\nabla Q_k^+(x_k)|^2\ge \alpha_k^2+2\alpha_k^2\eps_k\partial_d P(x_0)+o(\eps_k)
\]
Hence, recalling the definition of \(\ell\),
\[
-B=\partial_d P(x_0)\le \frac{\lambda_+^2-\alpha_k^2}{2\alpha_k^2\eps_k}+o(1)\to -\infty.
\]
This contradiction proves that \(\mathcal J=\emptyset\).

We next prove the transmission condition in \eqref{e:trasmission}. Let us show that 
\[
\alpha_\infty^{2}\partial_{d}v_{+}-\beta_\infty^{2}\partial_{d} v_{-}\le 0,
\]
 the opposite inequality can then be proved by the very same argument. Suppose that there exist \(p\) and \(q\) with \(\alpha_\infty^{2} p>\beta_\infty^{2}q\) and a strictly sub-harmonic function  \(\tilde P\) with \(\partial_{d} \tilde P=0\)   such that 
\[
P=px_{d}^{+}-qx_{d}^{-}+\tilde{P}
\]
touches \(v\) strictly  from below at a point \(x_{0}\in \{x_{d}=0\}\cap B_{\sfrac 12}\) (note that the last set coincide with \(\mathcal C\) by the previous step). By \cref{l:touching} there exists a sequence of points  \(\partial \Omega_{u_{k}}\ni x_{k}\to x_{0}\) and a sequence of comparison functions \(Q_{k}\) touching \(u_{k}\) from below at \(x_{k}\) and satisfying \eqref{e:sticazzi3}. In particular \(x_{k}\notin \partial \Omega_{u_{k}}^{-}\setminus \partial \Omega_{u_{k}}^{+}\). We claim that  \(x_{k}\in \partial \Omega_{u_{k}}^{+}\cap \partial \Omega_{u_{k}}^{-}\). Indeed, otherwise  by (A.1) in \cref{l:optimality}, 
\[
\lambda_+^2\ge |\nabla Q_k^+(x_k)|^2
\]
and, by arguing as above,  this  contradicts  \(\ell=+\infty\). Hence, by \cref{l:optimality} (A.3)
\[
\begin{split}
\lambda_{+}^{2}-\lambda_{-}^{2}&\ge |\nabla Q^{+}_{k}(x_{k})|^{2}-|\nabla Q^{-}_{k}(x_{k})|^{2}
\\
&=\alpha_{k}^{2}-\beta_{k}^{2}+2\eps_{k}(\alpha_{k}^{2}p-\beta_{k}^{2}q)+o(\eps_{k})
\\
&=\lambda_{+}^{2}-\lambda_{-}^{2}+2\eps_{k}(\alpha_{k}^{2}p-\beta_{k}^{2}q)+o(\eps_{k}).
\end{split}
\]
Dividing by \(\eps_{k}\) and letting \(k\to \infty\), we obtain the desired contradiction.

\medskip
\noindent$\bo{0\leq \ell <\infty}.$
We start by  showing that \(\lambda_{\pm}^2\partial_d v_\pm \ge -\ell\)  on \(B_{\sfrac{1}{2}}\cap \{x_d=0\}\). We focus on \(v_-\) since the argument is symmetric. Let us assume that there exists \(q\in \R\) with  \(\lambda_-^2q<-\ell\) and a strictly subharmonic function  \(\tilde P\) with \(\partial_{d} \tilde P=0\)  such that function 
\[
P=q x_{d}+\tilde P
\]
touches \(v_-\) strictly  from below  at a point  \(x_{0}\in \{x_{d}=0\}\cap B_{\sfrac 12}\). Let now \(x_{k}\) and \(Q_{k}\) be as in   \cref{l:touching} (2). By the optimality conditions
\[
\lambda_{-}^{2}\le |\nabla Q_{k}^{-}(x_{k})|^{2}=\beta_{k}^{2}+2\eps_{k}\beta_{k}^{2}q+o(\eps_{k}).
\]
Since $\ell<\infty$, we have \(\beta_{k}=\lambda_{-}+O(\eps_{k})\) and so the above inequality leads to
\[
-\frac{\ell}{\lambda_-^2}=\lim_{k\to \infty}\frac{\lambda_{-}^{2} - \beta_{k}^{2} }{2\eps_{k}\beta_{k}^{2}} \le q<-\frac{\ell}{\lambda_-^2}
\]
which is a contradiction. 

We now show that   \(\lambda_\pm^2 \partial_d v_\pm =-\ell\) on \(\mathcal J\) and again we focus on \(v_-\). By  the previous step   it is enough to show that if  there exists a strictly superharmonic polynomial \(\tilde P\) with \(\partial_{d} \tilde P=0\) such 
\[
P=q x_{d}+\tilde P
\]
 touches \(v_{-}\) strictly  from above  at a point \(x_{0}\in \mathcal J\), then \(\lambda_-^2q\le -\ell\). Again, by \cref{l:touching}, we find points \(x_{k}\to x_{0}\) and functions \(Q_{k}\)  satisfying \eqref{e:sticazzi2} and touching \(-u^-_{k}\) from below at \(x_{k}\). Since \(x_{0}\in \mathcal J\), by \eqref{e:nodp} in \cref{cor:comp}, \(x_{k}\in \partial \Omega_{u_k}^-\setminus  \partial \Omega_{u_k}^+\). Hence, by \cref{l:optimality}, 
\[
\lambda_{-}^{2}\ge |\nabla Q_{k}^{-}(x_{k})|^{2}=\beta_{k}^{2}+2\beta_{k}^{2} \eps_{k} q+o(\eps_{k}),
\]
which  by arguing as above implies that \(\lambda_-^2 q\le -\ell\). 

It then remain to show the transmission condition in \eqref{e:twomembrane} at points in \(\mathcal C\). Again by symmetry of the arguments we will only show that 
\[
\lambda_{+}^{2}\partial_{d}v_{+}-\lambda_{-}^{2}\partial_{d} v_{-}\le 0,
 \qquad \text{on \(\mathcal C\)}.
\]
Let us hence assume that there exist  \(p\) and \(q\) with \(\lambda_{+}^{2} p>\lambda_{-}^{2}q\) and a strictly subharmonic polynomial \(\tilde P\) with \(\partial_{d} \tilde P=0\)   such that 
\[
P=px_{d}^{+}-qx_{d}^{-}+\tilde{P}
\]
touches \(v^+ \) and \(v^-\) strictly  from below at \(x_{0}\in \mathcal C\). By \cref{l:touching}, we find points \(x_{k}\to x_{0}\) and functions \(Q_{k}\) satisfying \eqref{e:sticazzi3}. In particular  \(x_{k}\notin \partial \Omega_{u_{k}}^{-}\setminus \partial \Omega_{u_{k}}^{+}\). By the previous step  we know that \(\lambda_-^2q\ge -\ell\) and thus \(\lambda_+^2 p>-\ell\), since we are assuming $\lambda_+^2p+>\lambda_-^2q\geq 0$. We now  distinguish two cases:
\begin{itemize}
\item[1)] \(x_{k }\) are one-phase points, namely \(x_{k}\in \partial \Omega_{u_{k}}^{+}\setminus \partial \Omega_{u_{k}}^{-}\). In this case 
\[
\lambda_{+}^{2}\ge  |\nabla Q_{k}^{+}(x_{k})|^{2}=\alpha_{k}^{2}+2\alpha_{k}^{2} \eps_{k} p+o(\eps_{k}),
\]
which implies that 
\[
\lambda_+^2 p+\ell=\lambda_{+}^2\lim_{k \to \infty}\Bigl(p+\frac{\alpha_k^2-\lambda_+^2}{2\alpha_k^2\eps_k}\Bigr)\le 0 
\]
 in contradiction with \(\lambda_+^2p>-\ell\).
\item[2)] \(x_{k }\) are two-phase points, namely \(x_{k}\in \partial \Omega_{u_{k}}^{+}\cap\partial \Omega_{u_{k}}^{-}\). Arguing as in Case 1,  we have that, by \cref{l:optimality},
\[
\begin{split}
\lambda_{+}^{2}-\lambda_{-}^{2}&\ge |\nabla Q^{+}_{k}(x_{k})|^{2}-|\nabla Q^{-}_{k}(x_{k})|^{2}
\\
&=\alpha_{k}^{2}-\beta_{k}^{2}+2\eps_{k}(\alpha_{k}^{2}p-\beta_{k}^{2}q)+o(\eps_{k})
\\
&=\lambda_{+}^{2}-\lambda_{-}^{2}+2\eps_{k}(\lambda_+^2p-\lambda_-^2q)+o(\eps_{k})
\end{split}
\]
which gives a contradiction with $ \lambda_+^2p>\lambda_-^2q$, as \(\eps_k \to 0\).
\end{itemize}
\end{proof}

\subsection{Proof of \cref{l:big_regime,l:small_regime}}\label{sub:proof_iof}
We recall the following regularity results for the  limiting problems.

\begin{lemma}[Regularity for the transmission problem]\label{l:reg_trans}
There exists a universal constant $C=C(\alpha_\infty, \beta_\infty, d)>0$ such that if  $v\in C^0(B_{\sfrac12})$ is  a viscosity solution of \eqref{e:trasmission} with \(\|v\|_{L^\infty(B_{\sfrac12})}\le 1\) then there exists \(\bo{v}\in\mathbb \R^{d-1}\), \(p, q \in \R\) with  $\alpha^2_\infty \,p=\beta^2_\infty \, q$ such that 
\begin{equation}\label{e:decay_trans}
\sup_{x\in B_r}
\frac{\bigl|v(x)-v(0)-(\bo{v}\cdot x'+ p\,x_d^+-\,q\,x_d^-)\bigr|}{r^2} \leq C 
\end{equation}
\end{lemma}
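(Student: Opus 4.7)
The plan is to absorb the transmission problem into a single harmonic function via an even-odd extension. Setting $a := \alpha_\infty^2$, $b := \beta_\infty^2$, $A := \frac{a-b}{a+b}$, $B := \frac{2b}{a+b}$ (so $A+B=1$), I define
\[
W(x', x_d) := \begin{cases} v_+(x', x_d) & x_d \ge 0,\\ A\, v_+(x', -x_d) + B\, v_-(x', x_d) & x_d < 0. \end{cases}
\]
First I would verify that $W$ is continuous across $\{x_d=0\}$ (immediate from $A+B=1$ and $v_+=v_-$ there) and that the one-sided normal derivatives match: computing $\partial_d W|_{0^-} = -A p + B q$ with $p := \partial_d v_+(x',0)$ and $q := \partial_d v_-(x',0)$, the interface identity $a p = b q$ together with the specific choice of $A,B$ gives $-A + B(a/b) = 1$, so $\partial_d W|_{0^-} = p = \partial_d W|_{0^+}$. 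A direct chain-rule computation shows $\Delta W = 0$ in each open half-ball (on $\{x_d<0\}$ one uses that $v_+(x',-x_d)$ remains harmonic under reflection). Since $W$ is $C^1$ across the hyperplane and harmonic away from it, testing against $\varphi \in C_c^\infty(B_{\sfrac12})$ and integrating by parts on each half kills the interface flux term, so $W$ is weakly harmonic, hence classically harmonic by Weyl's lemma.

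Next I would apply the standard interior estimate for harmonic functions: for $r\le \sfrac14$,
\[
\sup_{B_r} \bigl|W(x) - W(0) - \nabla W(0)\cdot x\bigr| \le C r^2 \|W\|_{L^\infty(B_{\sfrac12})} \le C(\alpha_\infty, \beta_\infty, d),
\]
where the second bound uses $\|W\|_\infty \le (|A|+|B|)\|v\|_\infty$. Setting $\bo{v} := \nabla_{x'} W(0)$, $p := \partial_d v_+(0) = \partial_d W(0)$ and $q := \partial_d v_-(0)$, the viscosity transmission condition at the origin forces $\alpha_\infty^2 p = \beta_\infty^2 q$. On $\{x_d\ge 0\}$ the bound directly yields the desired estimate for $v_+$. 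On $\{x_d < 0\}$, I would invert the definition of $W$ to write $v_-(x) = B^{-1}\bigl(W(x) - A\, v_+(x', -x_d)\bigr)$, substitute the first-order expansions of $W$ and of $v_+$ at the reflected point, and check algebraically that $(1+A)p/B = (a/b)p = q$. The expansion thus reduces to $v(0) + \bo{v}\cdot x' + q x_d + O(|x|^2) = v(0) + \bo{v}\cdot x' - q x_d^- + O(|x|^2)$, giving the desired decay.

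The main obstacle is the preliminary claim that $v_\pm$ are in fact $C^1$ up to the interface with classical pointwise trace $a \partial_d v_+ = b \partial_d v_-$, since this is needed to make the extension $W$ a bona fide $C^1$ function. This is the regularity statement for linear transmission problems and is essentially the content of \cref{s:app}; I expect it to follow either by identifying viscosity solutions of \eqref{e:trasmission} with weak solutions of $\mathrm{div}(A\nabla v)=0$ with piecewise constant coefficient $A = a\,\ind_{\{x_d>0\}} I + b\,\ind_{\{x_d<0\}} I$ (whose regularity up to the interface is classical) or by a direct De Silva-type $C^{1,\alpha}$ boundary Harnack argument for this linear transmission problem. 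Once this regularity is in hand, the reflection trick above delivers the quadratic decay with a constant depending only on $\alpha_\infty$, $\beta_\infty$, and $d$.
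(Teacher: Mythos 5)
The paper itself does not give an argument here: it simply cites \cite[Theorem 3.2]{dfs}. Your reflection computation is correct as algebra: with $a=\alpha_\infty^2$, $b=\beta_\infty^2$, $A=\frac{a-b}{a+b}$, $B=\frac{2b}{a+b}$ one indeed has $A+B=1$, $-A+B\,\frac ab=1$ and $\frac{1+A}{B}=\frac ab$, so \emph{if} $v_\pm$ are $C^1$ up to $\{x_d=0\}$ and satisfy $a\,\partial_d v_+=b\,\partial_d v_-$ pointwise, then $W$ is $C^1$ across the hyperplane, hence harmonic, and the interior estimate for $W$ transfers back to the quadratic expansion \eqref{e:decay_trans} with a constant depending only on $\alpha_\infty,\beta_\infty,d$. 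That part of your argument is a clean and essentially self-contained way to conclude, and it is in the same spirit as the classical treatment of flat-interface transmission problems.

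The genuine gap is the ``preliminary claim'' you defer: upgrading the viscosity formulation of \eqref{e:trasmission} (which, per \cref{def:viscosity_linearized}, only tests $v$ against the restricted class $p\,x_d^+-q\,x_d^-+\tilde P$ with $\partial_d\tilde P=0$ and $\tilde P$ strictly sub/superharmonic) to classical $C^1$ regularity up to the interface with a pointwise transmission condition. This is precisely the content of the result the paper cites, not a soft preliminary: your first suggested route, identifying the viscosity solution with the weak solution of $\mathrm{div}(A\nabla v)=0$ with $A=a\,\ind_{\{x_d>0\}}+b\,\ind_{\{x_d<0\}}$, requires a comparison principle/uniqueness statement in this restricted viscosity class (so that $v$ coincides with the classical solution having its own boundary data), and that comparison argument is the substantive part of the proof in \cite{dfs}; your second route (a De Silva-type improvement-of-flatness for the linear problem) is likewise a proof that would have to be carried out, not quoted. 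Moreover, your pointer to \cref{s:app} is a misattribution: that appendix proves \cref{l:touching} (construction of comparison functions for the $u_k$'s via the Kinderlehrer--Nirenberg change of variables), and contains no regularity statement for the linear transmission problem. As written, your argument shows that \emph{classical} solutions of the transmission problem satisfy \eqref{e:decay_trans}; to cover the viscosity solutions actually produced by \cref{l:limitprob} you must either supply the comparison/uniqueness step or, as the paper does, invoke \cite[Theorem 3.2]{dfs} directly.
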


The proof of this fact can be found in \cite[Theorem 3.2]{dfs}. A similar result holds for the linearized problem \eqref{e:twomembrane}.

\begin{lemma}[Regularity for the two-membrane problem]\label{l:reg_twom}
There exists a universal constant $C=C(\lambda_\pm, d)>0$ such that if  $v\in C^0(B_{\sfrac12})$ is  a viscosity solution of \eqref{e:twomembrane} with \(\|v\|_{L^\infty(B_{\sfrac12})}\le 1\) then there exists \(\bo{v}\in\mathbb \R^{d-1}\), \(p, q \in \R\) satisfying $\lambda_+^2 \,p=\lambda_-^2 \, q\ge -\ell$    such that 
\begin{equation}\label{e:decay_twomemb}
\sup_{x\in B_r} \frac{ \bigl|v(x)-v(0)-(\bo{v}\cdot x'+  \,p\,x_d^+-q\,x_d^-)\bigr|}{r^{\sfrac{3}{2}}}\leq C(1+\ell)
\end{equation}	
\end{lemma}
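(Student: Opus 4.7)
I would prove the lemma in two main steps: a reduction to $\ell=0$ and a contradiction/Liouville argument for the $C^{1,1/2}$ decay at the origin.

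\emph{Reduction to $\ell=0$.} Define
\[
\tilde v(x):=v(x)+\frac{\ell}{\lambda_+^2}\,x_d^+-\frac{\ell}{\lambda_-^2}\,x_d^-.
\]
Then $\tilde v$ is continuous and harmonic in $B_{1/2}^\pm$; the ordering $\tilde v_+\le\tilde v_-$ on $\{x_d=0\}$ is preserved (the correction vanishes there); and checking the three conditions in \cref{def:viscosity_linearized} directly, $\tilde v$ is a viscosity solution of \eqref{e:twomembrane} with $\ell$ replaced by $0$, since the added linear-in-$x_d^\pm$ correction exactly absorbs the Neumann inhomogeneities on $\mathcal J$ and is compatible with the transmission condition on $\mathcal C$. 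Moreover $\|\tilde v\|_{L^\infty(B_{1/2})}\le 1+C(\lambda_\pm)\ell$. A decay \eqref{e:decay_twomemb} for $\tilde v$ with $\ell=0$ and universal constant $C$ then transfers to $v$ with constant $C(1+\ell)$ after translating the admissible parameters back by $\ell/\lambda_\pm^2$; this shifts the lower bound on $\lambda_+^2 p=\lambda_-^2 q$ from $0$ to exactly $-\ell$.

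\emph{Compactness argument.} Assume $\ell=0$ and suppose, by contradiction, that no universal constant works. Then there exist viscosity solutions $v_k$ with $\|v_k\|_{L^\infty(B_{1/2})}\le 1$ and scales $r_k\downarrow 0$ at which the best admissible affine approximation error $\sigma_k$ exceeds $k\,r_k^{3/2}$. Choose nearly optimal affine maps $A_k(x)=v_k(0)+\bo v_k\cdot x'+p_k x_d^+-q_k x_d^-$ with $\lambda_+^2 p_k=\lambda_-^2 q_k\ge 0$, and normalize $w_k(x):=(v_k(r_k x)-A_k(r_k x))/\sigma_k$. Using interior harmonic estimates on each half-ball together with the partial Harnack compactness from the proof of \cref{cor:comp}, one extracts a locally uniform subsequential limit $w_\infty$ on $\R^d$ which is a viscosity solution of \eqref{e:twomembrane} (with a limiting parameter $\ell_\infty\ge 0$ tracked through the rescaling of $p_k,q_k$) and grows at infinity at most like $|x|^{3/2-\eta}$ for every $\eta>0$, by iterating the failed decay at dyadic scales between $r_k$ and $1/2$. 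By the quasi-optimality of $A_k$, $w_\infty$ cannot coincide with any admissible affine map.

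\emph{Liouville classification (main obstacle).} The crux is to prove: \emph{any viscosity solution of \eqref{e:twomembrane} on $\R^d$ with sub-$3/2$ growth is of the form $\bo v\cdot x'+p\,x_d^+-q\,x_d^-$ with $\lambda_+^2 p=\lambda_-^2 q\ge -\ell_\infty$.} Combined with the previous step this gives the desired contradiction. To prove the Liouville statement I would adapt the Almgren frequency formula to the two-membrane setting: weighting the Dirichlet energy by $\lambda_+^2$ on $\{x_d>0\}$ and $\lambda_-^2$ on $\{x_d<0\}$, the Neumann condition on $\mathcal J$ and the transmission condition on $\mathcal C$ combine into the natural boundary contribution, yielding a monotone non-decreasing frequency function. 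Its admissible homogeneities below $2$ are exactly $1$ (corresponding to the admissible affine profiles) and $3/2$, and the sub-$3/2$ growth then forces the frequency to equal $1$, i.e.\ $w_\infty$ is affine. The derivation of this monotone frequency for the two-membrane problem, together with its homogeneity classification, is the content of \cref{app:twomembrane}.
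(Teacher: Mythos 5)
Your first step (adding the linear correction in $x_d^\pm$ to reduce to $\ell=0$, with the parameter bound shifting to $-\ell$) is exactly the paper's first move and is fine. The rest of your argument, however, rests on two assertions that are not proved and do not hold as stated. First, compactness: the ``partial Harnack'' machinery of \cref{cor:comp} applies to the solutions $u_k$ of the original free boundary problem, not to viscosity solutions of \eqref{e:twomembrane}; to extract a limit of your rescalings $w_k$ \emph{up to} $\{x_d=0\}$ and verify that it is again a viscosity solution in the sense of \cref{def:viscosity_linearized}, you need an equicontinuity estimate up to the thin space for solutions of the two-membrane problem itself, which is essentially the regularity you are trying to prove -- and since \eqref{e:twomembrane} is already the linearized problem, your blow-up limit solves the same problem, so the compactness step gains nothing by itself. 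Second, the Liouville/frequency step is the actual crux and you defer it to \cref{app:twomembrane}, which does not contain it: the appendix proves \cref{l:reg_twom} by a completely different (and much shorter) device, namely the algebraic splitting $w_\pm(x',x_d)=\lambda_\pm^{-2}\,w_N(x',\mp x_d)-w_S(x',\mp x_d)$ into a harmonic function with Neumann data (smooth by reflection) and a solution of the thin obstacle problem, followed by the Athanasopoulos--Caffarelli $C^{1,\sfrac12}$ estimate. Moreover, an Almgren frequency formula is an energy identity; to apply it to your merely continuous viscosity limit you would first have to identify it with the variational ($H^1$) solution of the two-membrane problem, e.g.\ via a comparison/uniqueness argument, none of which appears in your plan.

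There is also a quantitative problem at the critical exponent. From failure of the $r^{\sfrac32}$ decay along a sequence you cannot conclude that the blow-up limit has growth $|x|^{\sfrac32-\eta}$ for every $\eta>0$; the dyadic iteration of the \emph{failed} estimate only yields growth $O(|x|^{\sfrac32})$. At that critical rate your Liouville classification is false as stated: the $\sfrac32$-homogeneous Signorini profile (the half-space solution behind the sharp $C^{1,\sfrac12}$ regularity) is a global solution of the limiting problem with exactly $|x|^{\sfrac32}$ growth and is not of the affine form $\bo{v}\cdot x'+p\,x_d^+-q\,x_d^-$, so ``frequency $=1$, hence affine'' cannot be forced by the growth information you actually obtain. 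In effect, to make your route work you would have to reprove the optimal Signorini regularity (the frequency gap at $\sfrac32$) in the two-membrane setting, which is precisely what the paper's two-line reduction to the Neumann plus thin-obstacle problems is designed to avoid; I suggest you adopt that decomposition after your (correct) $\ell$-reduction.
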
 

The proof of the above lemma reduces easily to the one of the thin obstacle problem,  since we were not able to find the statement of this  fact in the literature, we sketch its proof in \cref{app:twomembrane}.

It is by now well known that the regularity theory fo the limiting problems and a classical compactness argument  prove  \cref{l:big_regime,l:small_regime}. We sketch their arguments here:

\begin{proof}[\bf Proof of \cref{l:small_regime}] We argue by contradiction and we assume that for fixed \(\gamma\in (0,1/2)\) and \(M\) we can find a sequences of  functions \(u_k\)  and  numbers \(\alpha_k\) such that  
\[
\eps_k=\norm{u_k-H_{\alpha_k,\bo{e}_d}}_{L^\infty(B_1)}\to 0 \,,\quad\text{and}\quad  0\le \alpha_k-\lambda_+\le  M \eps_k,
\]
but for which  \eqref{e:iof_normal_oscillation} and \eqref{e:iof_thesis} for any choice of \(\rho\) and \(C\).  Note that by the second assumption above
\[
\ell<\frac{M}{\lambda_+}
\]
We let  \((v_k)_k\) be the sequence of functions defined in \eqref{e:vk} and we assume that they converge to a function \(v\) as in  \cref{cor:comp}, note that \(\|v\|_{L^\infty(B_{\sfrac12})}\le 1\). By \cref{l:limitprob}, \(v\) solves \eqref{e:twomembrane} and thus by  \cref{l:reg_twom} there exists \(\bo{v}\in\mathbb \R^{d-1}\), \(p, q \in \R\) satisfying $\lambda_+^2 \,p=\lambda_-^2 \, q\ge -\ell$    such that for all \(r\in (0,1/4)\)
\begin{equation}
\sup_{x\in B_{\rho}} \frac{ \bigl|v(x)-v(0)-(\bo{v}\cdot x'+  \,p\,x_d^+-q\,x_d^-)\bigr|}{r^{\gamma}}\leq r^{\sfrac{3}{2}-\gamma}C(1+M)
\end{equation}	
Hence we can fix \(\rho=\rho(\lambda_{\pm}, \gamma, M)\) such that 
\begin{equation}\label{e:rho}
\sup_{x\in B_{\rho}}  \bigl|v(x)-v(0)-(\bo{v}\cdot x'+  \,p\,x_d^+-q\,x_d^-)\bigr|\leq \frac{\rho^\gamma}{2}.
\end{equation}
We now set  
\begin{equation*}\label{e:rotation}
\tilde{\alpha}_k:=\alpha_k(1+ \eps_k p)+\delta_k \eps_k
\qquad \mbox{and}\qquad 
\bo{e}_k:=\frac{\bo{e}_d +\eps_k \bo{v}}{\sqrt{1+\eps_k^2\,|\bo{v}|^2}}\,,
\end{equation*}
where \(\delta_k\to 0\) is chosen so that \(\tilde \alpha_k \ge \alpha_k\),  note that the existence of such a sequence is due to the condition \(\lambda_+^2 p\ge -\ell\) since 
\[
\alpha_k(1+\eps_k p)=\Bigl(\lambda_++\frac{\ell}{\lambda_+}\eps_k+o(\eps_k)\Bigr)(1+\eps_k p)\ge \lambda_++o(\eps_k).
\]
  We let  \(H_k:=H_{\tilde \alpha_k, \bo{e_k}}\) and we note  that 
\[
|\alpha_k-\alpha|+|\bo{e_k}-\bo{e}_d|\leq C\, \eps_k\,,
\]
for a universal constant $C>0$, hence the proof will be concluded if we can show that 
\[
\sup_{B_{\rho}} |u_k(x)-H_k(x)|\le \rho^{\gamma} \eps_k
\]
where \(\rho\) is defined so that \eqref{e:rho} holds. This however easily follows from  the convergence of \(v_k\) to \(v\) in the sense of   \cref{cor:comp} since the functions defined by 
\[
\begin{cases}
\frac{H_k(x)-H_{\alpha_k, \bo{e}_d}}{\alpha_k \eps_k}\qquad &\text{ \(x_d>0\)}\\
\frac{H_k(x)-H_{\alpha_k, \bo{e}_d}}{\beta_k \eps_k}\qquad &\text{ \(x_d<0\)}
\end{cases}
\]
converges (again in the sense of \cref{cor:comp}) to the function 
\[
\bo{v}\cdot x'+p x_d^+-qx_d^-.
\]
\end{proof}

\begin{proof}[\bf Proof of \cref{l:big_regime}]
 Arguing by contradiction one assume for fixed \(\gamma \in (0,1)\) the existence of a sequence of of  functions \(u_k\)  and  numbers \(\alpha_k\), \(M_k\to \infty\)  such that  
\[
\eps_k=\norm{u_k-H_{\alpha_k,\bo{e}_d}}_{L^\infty(B_1)}\to 0 \,,\quad\text{and}\quad   \frac{\alpha_k-\lambda_+}{\eps_k}\ge  M_k\to \infty,
\]
but for which  \eqref{e:iof_normal_oscillation} and \eqref{e:iof_thesis} for any choice of \(\rho\) and \(C\). This implies that \(\ell=\infty\) and that the limiting functions \(v\) obtained in \cref{cor:comp} are solutions of \eqref{e:trasmission}. One then concludes the proof as above by using \eqref{l:reg_trans}.
\end{proof}

\section{Proof of the main results}\label{sec:4}
\subsection{Proof of \cref{thm:main} and \cref{cor:full_free_boundary}}\label{sub:sec:4}
The final step to obtain the desired regularity result is to show that \(|\nabla u^\pm|\) are \(C^\eta\) for a suitable \(\eta>0\)  up to the boundary. This indeed  implies  that $u^\pm$ are solutions of the classical one-phase free boundary problem in its viscosity formulation and the regularity will follows form \cite{desilva}.  The argument is similar to the one in \cite{spve}, therefore we only sketch the main steps and refer the reader to that paper for more details.

\begin{lemma}\label{l:fiorentina_merda}
	Suppose that $u$ is a local minimizer of $ J_{\textsc{tp}}$ in $D$. Then at every point of $\Gamma_{\text{\sc tp}}$ there is a unique  blow-up, that is, 
	\[
	\mathcal BU(x_0)=\{H_{\alpha(x_0), \bo{e}(x_0)}\}.
	\]
	Moreover there exists \(\eta>0\) such that for every open set \(D'\Subset D\) there is a constant $C(D', \lambda_{\pm}, d) >0$ such that, for every $x_0,y_0\in\Gamma_{\text{\sc tp}}\cap D'$, we have
	\begin{equation}\label{e:holder}
	|\alpha(x_0)-\alpha(y_0)|\le C|x_0-y_0|^\eta   
	\qquad\mbox{and}\qquad
	|\bo{e}(x_0)-{\bo{e}}(y_0)|\le C_0|x_0-y_0|^\eta,
	\end{equation}
	where $H_{{\bo{e}}(x_0),\alpha(x_0)}$ and $H_{{\bo{e}}(x_0),\alpha(x_0)}$ are the blow-ups at $x_0$ and $y_0$ respectively.
	In particular, $\Gamma_{\text{\sc tp}}\cap D'$ is locally a closed subset of the graph of a $C^{1,\eta}$ function.
\end{lemma}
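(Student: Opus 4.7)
The strategy is the standard Campanato-type argument based on iterating the flatness decay \cref{t:eps_regularity} at dyadic scales, in the spirit of \cite{desilva,spve}. Fix $\gamma\in(0,\sfrac12)$ and let $D'\Subset D$. By \cref{l:properties_of_min}(i), $u$ is $L$-Lipschitz on $D'$ for some $L=L(D')$ (which also controls $\alpha\in[\lambda_+,L]$ in any blow-up). Let $\eps_0,\rho,C_0$ be the constants from \cref{t:eps_regularity} associated with this $L$ and $\gamma$. A compactness/covering argument based on \cref{c:flatness}, applied at each $x_0\in\Gamma_{\text{\sc tp}}\cap D'$, produces a uniform radius $r_0>0$ such that for every $y_0\in \Gamma_{\text{\sc tp}}\cap D'$ there exist $\alpha_0(y_0)\in[\lambda_+,L]$ and $\bo{e}_0(y_0)\in\mathbb S^{d-1}$ with
\[
\|u_{y_0,r_0}-H_{\alpha_0(y_0),\bo{e}_0(y_0)}\|_{L^\infty(B_1)}\le \eps_0.
\]
Since $u_{y_0,r_0}$ inherits properties (a)--(d) of \cref{t:eps_regularity} from \cref{l:properties_of_min} and \cref{l:optimality}, all hypotheses of \cref{t:eps_regularity} are satisfied at every such $y_0$.

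The second step is the iteration. Setting $r_k:=\rho^k r_0$, repeated application of \cref{t:eps_regularity} (after each step, the rotated/rescaled function is again $\eps_0$-flat with respect to a new half-plane, and still $L'$-Lipschitz with $L'$ universal) produces sequences $\alpha_k(y_0)$ and $\bo{e}_k(y_0)$ with
\[
\|u_{y_0,r_k}-H_{\alpha_k(y_0),\bo{e}_k(y_0)}\|_{L^\infty(B_1)}\le \rho^{k\gamma}\eps_0,\qquad |\alpha_{k+1}-\alpha_k|+|\bo{e}_{k+1}-\bo{e}_k|\le C_0\rho^{k\gamma}\eps_0.
\]
Summing the geometric series, both sequences are Cauchy and converge to limits $\alpha(y_0)\ge\lambda_+$ and $\bo{e}(y_0)\in\mathbb S^{d-1}$, and one obtains the decay
\begin{equation}\label{e:plan_decay}
\|u_{y_0,r}-H_{\alpha(y_0),\bo{e}(y_0)}\|_{L^\infty(B_1)}\le C_1\Bigl(\tfrac{r}{r_0}\Bigr)^{\gamma}\qquad\text{for every }r\in(0,r_0].
\end{equation}
This immediately yields $\mathcal{BU}(y_0)=\{H_{\alpha(y_0),\bo{e}(y_0)}\}$.

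For the H\"older estimate, fix $x_0,y_0\in\Gamma_{\text{\sc tp}}\cap D'$ with $\delta:=|x_0-y_0|\le r_0/4$, set $r:=4\delta$, and rewrite \eqref{e:plan_decay} in its non-rescaled form:
\[
\bigl\|u(\cdot)-H_{\alpha(z_0),\bo{e}(z_0)}(\cdot-z_0)\bigr\|_{L^\infty(B_r(z_0))}\le C_1\,r_0^{-\gamma}\,r^{1+\gamma},\qquad z_0\in\{x_0,y_0\}.
\]
On the intermediate ball $B_{r/2}\!\bigl(\tfrac{x_0+y_0}2\bigr)\subset B_r(x_0)\cap B_r(y_0)$, by the triangle inequality
\[
\bigl\|H_{\alpha(x_0),\bo{e}(x_0)}(\cdot-x_0)-H_{\alpha(y_0),\bo{e}(y_0)}(\cdot-y_0)\bigr\|_{L^\infty(B_{r/2}((x_0+y_0)/2))}\le 2C_1r_0^{-\gamma}r^{1+\gamma}.
\]
Two-plane solutions with slopes of order one that agree up to error $\eta$ on a ball of radius $R$ have parameters differing by $O(\eta/R)$ (this is a direct consequence of the explicit form of $H_{\alpha,\bo{e}}$ together with the constraint $\alpha^2-\beta^2=\lambda_+^2-\lambda_-^2$). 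Applying this with $\eta\sim r^{1+\gamma}$ and $R\sim r\sim\delta$ gives \eqref{e:holder}. I expect the main (but purely bookkeeping) obstacle to be verifying carefully that the iterated rescalings still satisfy (a)--(d) of \cref{t:eps_regularity} with constants independent of $k$, and handling the quantitative comparison of two-plane solutions at different base points; both are routine once set up.

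Finally, to deduce the graph statement, \eqref{e:plan_decay} means that at scale $r$ the free boundary $\partial\Omega_u^\pm\cap B_r(y_0)$ lies in a $C_1r_0^{-\gamma}r^{1+\gamma}/\alpha(y_0)$-tubular neighborhood of the hyperplane $\{(x-y_0)\cdot\bo{e}(y_0)=0\}$. Combined with the H\"older dependence of the normals $\bo{e}(\cdot)$ on $\Gamma_{\text{\sc tp}}\cap D'$, a classical Campanato/Reifenberg characterization (see e.g.\ the argument in \cite[Section 8]{altcaf} or \cite{spve}) implies that $\Gamma_{\text{\sc tp}}\cap D'$ is locally contained in the graph of a $C^{1,\eta}$ function with $\eta=\gamma$; the fact that it is a \emph{closed} subset (possibly a proper one, with complement corresponding to branching segments on the $C^{1,\eta}$ boundary of $\Omega_u^\pm$) follows since $\Gamma_{\text{\sc tp}}$ is closed by definition.
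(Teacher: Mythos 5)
Your proof follows the same skeleton as the paper's: uniform initial flatness from \cref{c:flatness} plus a covering argument, iteration of \cref{t:eps_regularity} along the scales $\rho^k r_0$ to get the decay $\|u_{y_0,r}-H_{\alpha(y_0),\bo{e}(y_0)}\|_{L^\infty(B_1)}\le C(r/r_0)^\gamma$ and hence uniqueness of the blow-up, and then a comparison of the blow-ups at two nearby points of $\Gamma_{\text{\sc tp}}$. The only genuine divergence is in the H\"older step. The paper compares the rescalings $u_{r,x_0}$ and $u_{r,y_0}$ directly via the crude Lipschitz bound $\|u_{r,x_0}-u_{r,y_0}\|_{L^\infty(B_1)}\le L|x_0-y_0|/r$, which forces the choice $r=|x_0-y_0|^{1-\eta}$ and yields the exponent $\eta=\sfrac{\gamma}{(1+\gamma)}$; you instead compare the two two-plane approximations on the overlap of the balls at scale $r\sim\delta:=|x_0-y_0|$. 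Your route does work (and in fact gives the better exponent $\eta=\gamma$), but the rigidity statement you invoke needs one more observation than you acknowledge: the functions $H_{\alpha(x_0),\bo{e}(x_0)}(\cdot-x_0)$ and $H_{\alpha(y_0),\bo{e}(y_0)}(\cdot-y_0)$ are translates of two-plane solutions by \emph{different} base points, and a displacement with normal component of order $\delta$ would contribute $O(\delta)$ to their $L^\infty$ distance on a ball of radius $\sim\delta$, which would only give $|\alpha(x_0)-\alpha(y_0)|+|\bo{e}(x_0)-\bo{e}(y_0)|=O(1)$. What saves the argument is that the normal component of $y_0-x_0$ is automatically $O(r_0^{-\gamma}\delta^{1+\gamma})$, since $u(y_0)=0$ forces $|H_{\alpha(x_0),\bo{e}(x_0)}(y_0-x_0)|\le C_1r_0^{-\gamma}r^{1+\gamma}$ and hence $|(y_0-x_0)\cdot\bo{e}(x_0)|\le \lambda_-^{-1}C_1r_0^{-\gamma}r^{1+\gamma}$, while purely tangential translations leave a two-plane solution unchanged; once this is recorded, reading off the parameter estimate from the explicit form of $H_{\alpha,\bo{e}}$ (evaluating at points at distance $\sim\delta$ on either side of the common zero hyperplane and on the hyperplane itself) is indeed elementary. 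Please add that line; the rest of your bookkeeping (persistence of hypotheses (a)--(d) under rescaling and rotation, the bound $\tilde\alpha\le 2L$, and the final Campanato-type graph argument) is at the same level of detail as the paper's own proof.
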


\begin{proof}
	We first notice that by \cref{c:flatness} and the definition of $\mathcal{BU}(x_0)$, given $\eps_0>0$ as in  \cref{t:eps_regularity} we can find $r_0>0$ and \(\rho_0\)  such that  \eqref{e:start} is satisfied by $u_{y_0,r_0}$  for some $H_{\alpha, \bo{e}}\in \mathcal{BU}(x_0)$ and for all \(y_0\in B_{\rho_0}(x_0)\).
	
We can thus repeatedly apply \cref{t:eps_regularity}  together with   standard  arguments to infer that for all \(y_0\in B_{\rho_0}(x_0)\) there  exists a unique \(H_{{\bo{e}}(y_0),\alpha(y_0)}\) such that 
	\begin{equation}\label{e:convunif}
	\|u_{r,x_0}-H_{{\bo{e}}(y_0),\alpha(y_0)}\|_{L^\infty(B_r(y_0))}\le C_0 r^\gamma
	\end{equation}
	where  $\gamma\in (0,\sfrac12)$. A covering argument implies the validity of the above estimate for all \(x_0\in \Gamma_{\text{\sc tp}}\cap D'\). Next, for  \(x_0, y_0\in \Gamma_{\text{\sc tp}}\cap D'\) set $r:=|x_0-y_0|^{1-\eta}$ and $\eta:=\sfrac{\gamma}{(1+\gamma)}$, and recall that $u$ is $L$-Lipschitz  (with constant depending on \(D'\)) to get
	\begin{align*}
	\|&H_{{\bo{e}}(x_0),\alpha(x_0)}-H_{{\bo{e}}(y_0),\alpha(y_0)}\|_{L^\infty(B_1)}\\
	&\leq \|u_{r,x_0}-H_{{\bo{e}}(x_0),\alpha(x_0)}\|_{L^\infty( B_1)}+\|u_{r,x_0}-u_{r,y_0}\|_{L^\infty( B_1)}+\|u_{r,y_0}-H_{{\bo{e}}(y_0),\alpha(y_0)}\|_{L^\infty( B_1)}\\
	&\leq \left( C_0 r^{\gamma} + \frac{L}r{\left|x_0-y_0\right|}+C_0 r^{\gamma}  \right)=(L+2C_0)\,  \left|x_0-y_0\right|^{\eta}\,.
	\end{align*}
	The conclusion now follows easily from this inequality.
\end{proof}

\begin{lemma}\label{l:visc}
	Under the same assumptions of \cref{l:fiorentina_merda}, there are $C^{0,\eta}$ continuous functions $\alpha \colon \partial\Omega_u^+\to\R$,   $\beta \colon \partial\Omega_u^-\to\R$  such that $\alpha\ge \lambda_+$ , \(\beta \ge \lambda-\),   and $u^\pm$ are viscosity solutions of the one-phase problem
	\begin{equation*}
	\Delta u^+=0\quad\text{in}\quad\Omega_u^+\,,\qquad |\nabla u^+|=\alpha \quad\text{on}\quad \partial\Omega_u^+\,.
	\end{equation*}
and 
	\begin{equation*}
	\Delta u^-=0\quad\text{in}\quad\Omega_u^-\,,\qquad |\nabla u^-|=\beta \quad\text{on}\quad \partial\Omega_u^-\,.
	\end{equation*}
\end{lemma}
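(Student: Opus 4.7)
I would define
\[
\alpha(x)=\begin{cases}\lambda_+&\text{if } x\in\Gamma_{\textsc{op}}^+,\\ \alpha(x)\text{ from \cref{l:fiorentina_merda}}&\text{if } x\in\Gamma_{\textsc{tp}},\end{cases}
\]
and analogously $\beta(x)=\lambda_-$ on $\Gamma_{\textsc{op}}^-$ together with $\beta(x)=\sqrt{\alpha(x)^2-\lambda_+^2+\lambda_-^2}$ on $\Gamma_{\textsc{tp}}$. The identity $\alpha^2-\beta^2=\lambda_+^2-\lambda_-^2$ and the blow-up classification in \cref{l:class_blow-up_limits} guarantee $\alpha\ge \lambda_+$ and $\beta\ge \lambda_-$. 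Continuity on the open pieces $\Gamma_{\textsc{op}}^\pm$ is trivial since $\alpha,\beta$ are locally constant there, and on $\Gamma_{\textsc{tp}}$ it is given by \cref{l:fiorentina_merda}. The real content of the lemma is thus the matching across branching points and the viscosity property at two-phase points with $\alpha>\lambda_+$.

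The main obstacle is the following claim: \emph{at every branching point $x_0\in\Gamma_{\textsc{tp}}^{\textsc{br}}$, one has $\alpha(x_0)=\lambda_+$} (equivalently, $\beta(x_0)=\lambda_-$). I would argue by contradiction: suppose $\alpha(x_0)>\lambda_+$. The iteration underlying \cref{l:fiorentina_merda} (repeated application of \cref{t:eps_regularity}) in fact yields the quantitative estimate
\[
\|u_{x_0,r}-H_{\alpha(x_0),\bo e(x_0)}\|_{L^\infty(B_1)}\le C r^{\gamma},
\]
so that for $r$ below a threshold depending on $\alpha(x_0)-\lambda_+$ we sit entirely in the non-branching regime of \cref{l:big_regime}. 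Iterating the partial boundary Harnack of \cref{lm:partialharnack1} in this regime traps $u$ between two translates of $H_{\alpha(x_0),\bo e(x_0)}$ with a gap decaying geometrically in scale; combined with the classical regularity of the interior two-phase transmission problem (\cite{caf1,dfs}), this forces $\partial\Omega_u^+=\partial\Omega_u^-$ in a neighborhood of $x_0$, both given by a single $C^{1,\eta}$ manifold. In particular $|\{u=0\}\cap B_r(x_0)|=0$ for small $r$, contradicting the branching assumption.

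Once branching point values are pinned down, Hölder continuity of $\alpha$ on $\partial\Omega_u^+$ follows from the uniform convergence rate
\[
\|u_{x_0,r}-\alpha(x_0)(x\cdot \bo e(x_0))_+\|_{L^\infty(B_1)}\le C r^{\gamma}\qquad \forall\, x_0\in \partial\Omega_u^+,
\]
which on $\Gamma_{\textsc{tp}}$ is the iteration behind \cref{l:fiorentina_merda} and on $\Gamma_{\textsc{op}}^+$ is the Alt--Caffarelli one-phase flatness decay; the two rates are compatible at the interface precisely because of Step~1. Combining it with the Lipschitz interpolation $\|u_{x_0,r}-u_{y_0,r}\|_{L^\infty(B_1)}\le L|x_0-y_0|/r$ and choosing $r\sim|x_0-y_0|^{1/(1+\gamma)}$ yields $|\alpha(x_0)-\alpha(y_0)|\le C|x_0-y_0|^{\eta}$ with $\eta=\gamma/(1+\gamma)$. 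The same argument gives Hölder continuity of $\beta$ on $\partial\Omega_u^-$.

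Finally, for the viscosity property: harmonicity of $u^\pm$ in $\Omega_u^\pm$ is \cref{l:optimality}; the boundary condition at $x_0\in\Gamma_{\textsc{op}}^\pm$ (where $\alpha=\lambda_+$, $\beta=\lambda_-$) is \cref{l:NY}. At $x_0\in\Gamma_{\textsc{tp}}$, let $Q$ be a comparison function touching $u^+$ from above (resp.\ below); passing to the blow-up, \cref{l:fiorentina_merda} gives $u^+_{x_0,r}\to \alpha(x_0)(x\cdot \bo e(x_0))_+$ uniformly, while the $C^1$ regularity of $Q^+$ up to $\partial\{Q>0\}$ gives $Q^+_{x_0,r}\to |\nabla Q^+(x_0)|(x\cdot \bo e_Q)_+$ with $\bo e_Q=\nabla Q^+(x_0)/|\nabla Q^+(x_0)|$. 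The inequality $Q^+\ge u^+$ (resp.\ $\le u^+$) survives the limit, and evaluating it along $x=\pm\bo e(x_0)$ and along vectors orthogonal to $\bo e(x_0)$ forces $\bo e_Q=\bo e(x_0)$ and $|\nabla Q^+(x_0)|\ge\alpha(x_0)$ (resp.\ $\le\alpha(x_0)$); the analogous argument on $u^-$ delivers the condition $|\nabla Q^-(x_0)|\lessgtr\beta(x_0)$.
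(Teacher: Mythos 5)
Your construction of $\alpha,\beta$, the reduction to the matching of values at the interface between $\Gamma_{\textsc{op}}^{\pm}$ and $\Gamma_{\textsc{tp}}$, and the final viscosity argument (via the differentiability of $u^{\pm}$ at two-phase points coming from the decay \eqref{e:pestorosso}) all agree with the paper. The genuine divergence is in the matching step. The paper does not argue at branching points: given $x_0\in\Gamma_{\textsc{tp}}$ which is a limit of one-phase points $x_k\in\Gamma_{\textsc{op}}^+$, it rescales $u^+$ at $x_k$ by $r_k=\dist(x_k,\Gamma_{\textsc{tp}})$; each rescaling is a viscosity solution of the pure one-phase problem with constant $\lambda_+$, hence so is the limit, which by \eqref{e:pestorosso} is also the half-plane solution of slope $\alpha(x_0)$ — forcing $\alpha(x_0)=\lambda_+$. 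Your route instead shows that $\alpha(x_0)>\lambda_+$ would make $x_0$ an interior two-phase point. Both are viable, and yours has the merit of explaining the dichotomy in terms of the two regimes of \cref{l:small_regime} and \cref{l:big_regime}.

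Three points in your version need repair. (1) Invoking the transmission-problem regularity of \cite{dfs} to conclude $\partial\Omega_u^+=\partial\Omega_u^-$ is circular: that theory presupposes the two boundaries coincide. What you actually need — and what does follow — is that iterating \cref{lm:partialharnack1} in balls centered at \emph{every} nearby free boundary point traps $\{u=0\}$ in strips of width $o(\rho)$ at scale $\rho$, whence $|\{u=0\}\cap B_r(x_0)|=0$ by a covering argument; this alone contradicts branching. (2) Continuity of $\alpha$ requires $\alpha=\lambda_+$ at every point of $\Gamma_{\textsc{tp}}\cap\overline{\Gamma_{\textsc{op}}^+}$, whereas your claim covers only branching points; you should add that such interface points are branching points, which follows from the positive density of $\{u=0\}$ at the approximating one-phase points. (3) The uniform rate $\|u^+_{x_0,r}-\alpha(x_0)(x\cdot\bo e(x_0))_+\|_{L^\infty(B_1)}\le Cr^{\gamma}$ that you assert at one-phase points is precisely what is delicate near the interface: at scales above $\dist(x_0,\Gamma_{\textsc{tp}})$ the positive part still carries the two-phase slope, so the one-phase decay does not start at a uniform scale with uniform constants. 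The paper's blow-up at scale $r_k=\dist(x_k,\Gamma_{\textsc{tp}})$ is exactly the device that quantifies this transition, and some version of it is needed to get the Hölder exponent across the interface rather than mere continuity.
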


\begin{proof}	We will  sketch the argument for $u^+$, $u^-$ being the same. Clearly $\Delta u^+=0$ in $\Omega_u^+$. By \eqref{e:convunif} we have that, if $x_0\in \Gamma_{\text{\sc tp}}\cap D'$, then 
		\begin{equation}\label{e:pestorosso}
		\big|u^+(x)-\alpha(x_0)(x-x_0)\cdot \bo{e}(x_0)\big|\le C_0|x-x_0|^{1+\gamma}\quad \text{for every}\quad x\in B_{r_0}(x_0)\cap \Omega_u^+\,,
		\end{equation}
		where \(r_0\) and \(C_0\) depends only on \(D'\).
		In particular, $u^+$ is differentiable on $\Omega_u^+$ up to $x_0$ and $|\nabla u^+(x_0)|=\alpha (x_0)$. On the other hand if $x_0\in \Gamma_{\textsc{op}}^+:=\Omega_u^+\setminus \partial \Omega_u^-$, then $|\nabla u^+(x_0)|=\lambda_+$ is constant, in the viscosity sense. 
		
		To conclude we only need to prove that $\alpha \in C^{0,\eta }(\de \Omega_+)$. Since $\alpha $ is \(\eta\)  H\"older continuous on $\Gamma_{\text{\sc tp}}$ by \cref{l:fiorentina_merda} and constant on $\Gamma_{\textsc{op}}^+$, we just need to show that if $x_0\in\Gamma_{\text{\sc tp}}$ is such that there is a sequence  $x_k\in \Gamma_{\textsc{op}}^+$ converging to $x_0$, then $\alpha(x_0)=\lambda_+$. To  this end, let $y_k \in \Gamma_{\text{\sc tp}}$ be such that 
		\[
		\dist(x_k, \Gamma_{\text{\sc tp}})=|x_k-y_k|\,.
		\]
		Let us set 
	$$
	r_k=|x_k-y_k|\qquad\text{and}\qquad u_k(x)=\frac1{r_k}u^+(x_k+r_kx),
	$$ 
	 and note that $u_k$ is a viscosity solution of the free boundary problem 
	$$
	\Delta u_k=0\quad\text{in}\quad\Omega_{u_k}^+\cap B_1\,,\qquad |\nabla u_k|=\lambda_+\quad\text{on}\quad \partial\{u_k>0\}\cap B_1\,.
	$$
	Since $u_k$ are uniformly Lipschitz they converge to a function $u_\infty$ which is also a viscosity solution of the same problem,  \cite{desilva}. On the other hand, by \eqref{e:pestorosso}, we have that
	 \[
	 u_\infty(x)=\alpha(x_0)(x\cdot \bo{e}(x_0))^+,
	 \] 
	 which gives that \(\alpha(x_0)=\lambda_+\).
\end{proof}

\begin{proof}[Proof of \cref{thm:main}] 
Let  $x_0\in\Gamma_{\text{\sc tp}}=\de \Omega_u^+\cap \de \Omega_u^-$ and let $\bar{\eps}$ be the constant in \cite[Theorem 1.1]{desilva}. Thanks to the classification of blow-ups at points of $\Gamma_{\text{\sc tp}}$, we can choose $r_0>0$, depending on $x_0$, such that 
\[
\|u_{x_0,r_0}-H_{\alpha,\bo{e}} \|_{L^{\infty}(B_1)}<\bar{\eps}
\]
so that thanks to \cref{l:visc}, we can apply \cite[Theorem 1.1]{desilva} to conclude that locally at $x_0\in \Gamma_{\text{\sc tp}}$ the free boundaries $\de \Omega_u^\pm$ are $C^{1,\eta}$ graphs. By the arbitrariness of $x_0$ this concludes the proof.
\end{proof}	
	
\begin{proof}[Proof of \cref{cor:full_free_boundary}]
The proof of the corollary is straightforward. Indeed by \cref{thm:main} there  exits an open neighborhood \(W\) of the two-phase free boundary \(\Gamma_{\textsc{tp}}\) such that  \(\partial \Omega_u^\pm \cap W\subset \Reg(\partial \Omega_u^\pm) \). Outside \(W\), \(u^\pm\) are (local) minimizers of the one-phase problem and thus the desired decomposition and the stated properties follows by the results in \cite{altcaf,EdEn,weiss}.
\end{proof}

\subsection{Proof  of \cref{t:multiphase}}\label{sub:intro:multi}
In this section we prove the regularity of the solutions to the shape optimization problem \eqref{e:multi}. The proof is a consequence of \cref{thm:main} and the analysis in \cite{stv}. 
Indeed, the existence of an optimal (open) partition $(\Omega_1,\dots,\Omega_n)$ was proved in \cite{bucve} and (in dimension two) in \cite{benve}. 
Moreover, in \cite{bucve} and \cite{mono}, it  has been  shown that each of the eigenfunctions $u_i$ on $\Omega_i$ is Lipschitz continuous 
as a function defined on $\R^d$ (extended as zero outside $\Omega_i$). Furthermore,   there are no triple points inside the box $D$ and no two-phase points on the boundary $\partial D$, that is,
\begin{itemize}
\item $\partial\Omega_i\cap\partial\Omega_j\cap\partial\Omega_k=\emptyset$ for every set of different coefficients $\{i,j,k\}\subset\{1,\dots,n\}$;
\item $\partial\Omega_i\cap\partial\Omega_j\cap\partial D=\emptyset $ for every $i\neq j\in\{1,\dots,n\}$.
\end{itemize}
The regularity of   $\partial\Omega_i$ can then be obtained as follows.
\begin{itemize}
\item By \cite[Lemma 7.3]{stv}, the function \(u=u_i-u_j\) is a almost of \eqref{e:J1} with \(\lambda^2_+=m_i\) and \(\lambda^2_-=m_j\), in the sense that 
\[
 J_{\textsc{tp}}(u, B_r)\le J_{\textsc{tp}}(v, B_r)+Cr^{d+2}\qquad \text{for all \(v=u\) on \(\partial B_r\)}\,,
\]
provided \(r\) is sufficiently small.
\item By the classification of the blow up limits in \cite[Proposition 4.3]{stv} and the arguments in \cref{sub:viscosity}, \(u\) is a viscosity solution of 
\begin{equation*}
\begin{cases}
\Delta u= -\lambda_1(\Omega_i)u_i+\lambda_1(\Omega_j)u_j\qquad  &\text{on } \{u\ne 0\}
\\
 |\nabla u^+|^2-|\nabla u^-|^2=m_i-m_j ,\,
|\nabla u^+|\ge \sqrt{m_i}\quad\text{and}\quad |\nabla u^-|\ge \sqrt{m_j} &\text{on } \partial \Omega_u^+\cap \partial \Omega_u^-\,;
\\
  |\nabla u^+|=\sqrt{m_i}&\text{on }\partial  \Omega_u^+\setminus \partial \Omega_u^-\,; \\
   |\nabla u^-|=\sqrt{m_j}&\text{on }  \partial  \Omega_u^-\setminus \partial \Omega_u^+\,.
\end{cases}
\end{equation*}

\item  $C^{\infty}$ regularity of the one-phase part $\partial\Omega_i\setminus \bigl(\partial D\cup \bigl(\bigcup_{i\neq j}\partial\Omega_j\bigr)\bigr)$ follows by techniques in  \cite{altcaf}, see \cite{brla};
\item  $C^{1,\eta}$-regularity of $\partial\Omega_i$ in a neighborhood of $\partial\Omega_i\cap\partial D$ was proved in \cite{rtv}; the main argument boils down to the regularity result from \cite{changsavin};
\item  $C^{1,\eta}$-regularity of $\partial\Omega_i$ in a neighborhood of $\partial\Omega_i\cap\partial \Omega_j$ follows by  using the same arguments\footnote{Note that  \(\Delta u_r(x)=r\Delta u (rx)\).  Hence, since \(\Delta u\) is uniformly bounded in \(L^\infty\), \(\|\Delta u_r\|_{L^\infty}=O(r)\)  and thus  this does not  interfere  with the iteration argument.} in the proof of \cref{thm:main},  using \cref{thm:below} in place of  \cref{t:eps_regularity}.
\end{itemize} 

\begin{theorem}\label{thm:below}
Let \(0\le \lambda_+\le \lambda_-\le L\), \(f \in C^0(B_1)\)  and let  \(u:B_1 \to \R\) be a  \(L\)-Lipschitz viscosity solution of 
\begin{equation*}
\begin{cases}
\Delta u= f\qquad  &\text{on }\{u\ne 0\}
\\
 |\nabla u^+|^2-|\nabla u^-|^2=\lambda_+^2-\lambda_-^2 ,\,
|\nabla u^+|\ge \lambda_+\quad\text{and}\quad |\nabla u^-|\ge \lambda_- &\text{on }\partial \Omega_u^+\cap\partial \Omega_u^-\,;\\
  |\nabla u^+|=\lambda_+&\text{on } \partial \Omega_u^+\setminus\partial\Omega_u^-\,; \\
   |\nabla u^-|=\lambda_-&\text{on }  \partial \Omega_u^-\setminus \partial \Omega_u^+\,.
\end{cases}
\end{equation*}
Then for every  $\gamma\in(0,\sfrac12)$, there exist $\eps_0>0$, $C>0$ and $\rho\in(0,\sfrac14)$ depending only on \(\lambda_{\pm}\), \(L\) and \(\gamma\) such that if 
\begin{equation*}
\|u-H_{\alpha,\bo{e}_d}\|_{L^\infty(B_1)}\le \eps_0\qquad\text{for some}\qquad L\ge \alpha\ge\lambda_+\,.
\end{equation*}
then, there are $\bo{e}\in\mathbb S^{d-1}$ and $\tilde\alpha\ge \lambda_+$ such that 
\begin{equation}\label{e:iof_normal_oscillation2}
|\bo{e}-\bo{e}_d|+|\tilde\alpha-\alpha|\le C\bigl(\|u-H_{\alpha,\bo{e}_d}\|_{L^\infty(B_1)}+\|f\|_{L^\infty(B_1)}\bigr)
\end{equation}
and
\begin{equation*}\label{e:iof_thesis2}
\|u_\rho-H_{\tilde\alpha,\bo{e}}\|_{L^\infty(B_1)}\le \rho^{\gamma}\,\|u-H_{\alpha,\bo{e}_d}\|_{L^\infty(B_1)}+C\|f\|_{L^\infty(B_1)}
\end{equation*} 

\end{theorem}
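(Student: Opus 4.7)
The plan is to adapt the contradiction/compactness scheme of the proof of \cref{t:eps_regularity} (via \cref{l:small_regime,l:big_regime}), treating the inhomogeneous term $f$ as a perturbation that is either negligible, comparable to the flatness $\eps$, or dominant, and show that in all three regimes we obtain the required decay. Concretely, suppose by contradiction the statement fails for some $\gamma\in(0,\sfrac12)$: then there exist sequences $u_k$, $f_k$, $\alpha_k$, and half-plane solutions $H_k=H_{\alpha_k,\bo{e}_d}$ with $\eps_k:=\|u_k-H_k\|_{L^\infty(B_1)}\to 0$, such that no admissible pair $(\tilde\alpha,\bo{e})$ yields the conclusion with constant $C=k$. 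Passing to a subsequence, let $\delta_k:=\|f_k\|_{L^\infty(B_1)}/\eps_k\to \delta\in[0,+\infty]$.

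If $\delta=+\infty$, the choice $\tilde\alpha=\alpha_k$, $\bo{e}=\bo{e}_d$ trivially satisfies \eqref{e:iof_normal_oscillation2}, and since $u_k$ is $L$-Lipschitz with $u_k(0)=0$, one has $\|u_{k,\rho}-H_k\|_{L^\infty(B_1)}\le \rho^{-1}\eps_k\le \rho^\gamma\eps_k + k\|f_k\|_{L^\infty(B_1)}$ for $k$ large (since $\delta_k\to\infty$), a contradiction. If $\delta=0$, then $\|f_k\|_{L^\infty}=o(\eps_k)$: the partial Harnack inequalities \cref{lm:partialharnack1,lm:partialharnack2} still apply, because the explicit barriers $\varphi$ (with $\Delta\varphi\ge c_d>0$) remain strict super/sub-solutions of $\Delta w=f_k$ after absorbing the $O(\|f_k\|_\infty)=o(\eps)$ error in the sliding coefficient; the rest of the argument goes through verbatim, yielding an admissible pair that contradicts the negation.

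The main case is $\delta\in(0,\infty)$. Here the linearized sequence $v_k=(u_k-H_k)/(\alpha_k\eps_k)$ solves $\Delta v_k=f_k/(\alpha_k\eps_k)$ in each phase with a uniformly bounded right-hand side. The compactness argument of \cref{cor:comp} works after replacing the barriers $f_t$ in \cref{lm:partialharnack1,lm:partialharnack2} by perturbations that are sub/super-solutions of $\Delta w=f_k$ (again, the correction is $O(\|f_k\|_\infty)=O(\eps_k)$, so it is absorbed in the free-boundary optimality inequalities as long as the dimensional constant $\tau$ is chosen a bit smaller). The limit $v$ now solves an inhomogeneous version of \eqref{e:trasmission} or \eqref{e:twomembrane}, namely $\Delta v_\pm = g_\pm$ with $g_\pm\in L^\infty$ being a weak-$*$ limit of $f_k/(\alpha_k\eps_k)$ (identified via the touching-function \cref{l:touching}, whose proof adapts since the comparison functions are taken on each phase separately). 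The regularity statements in \cref{l:reg_trans,l:reg_twom} extend to bounded RHS: one splits $v_\pm$ as a sum of a solution to the homogeneous problem with the same boundary trace plus a Newton potential of $g_\pm$, the latter bounded by $C\|g_\pm\|_\infty r^2$, which gives a decay of the form $\sup_{B_r}|v-\ell|\le Cr^{3/2}(1+\ell+\|g\|_\infty)$. Running the Taylor expansion of the original proof produces $(\tilde\alpha_k,\bo{e}_k)$ with $\|u_{k,\rho}-H_{\tilde\alpha_k,\bo{e}_k}\|_{L^\infty(B_1)}\le \rho^\gamma\eps_k + C\|f_k\|_{L^\infty}$, contradicting the assumed failure.

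The main obstacle I anticipate is the bookkeeping in the modified partial Harnack and touching lemmas: one must verify that the $f$-correction in the barriers does not interfere with the strict inequalities used in the viscosity comparisons, and that the limit $v$ satisfies the appropriate inhomogeneous version of the two-membrane/transmission problem in viscosity sense. The arguments are structurally identical to those in \cref{sec:3}, but every invocation of $\Delta u=0$ or $\Delta\varphi\ge c_d$ must be tracked with its additional $O(\|f\|_\infty)$ error, and these errors must be shown to be $o(\eps)$ in the relevant regimes. Once this is done, the regularity of the limit problem with bounded RHS (a routine extension of \cref{l:reg_twom}, reducing to the thin-obstacle/Signorini problem with a bounded source as in \cref{app:twomembrane}) closes the argument.
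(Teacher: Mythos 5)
Your proposal is correct in substance and, for two of your three regimes, coincides with the paper's argument: the paper first observes that whenever $\|f\|_{L^\infty(B_1)}\ge \eps\,\|u-H_{\alpha,\bo{e}_d}\|_{L^\infty(B_1)}$ the conclusion holds \emph{trivially} with $\tilde\alpha=\alpha$, $\bo{e}=\bo{e}_d$, $\rho=\sfrac14$ and $C=C(\eps)$ (since $\|u_{\sfrac14}-H\|_{L^\infty(B_1)}\le 4\,\|u-H\|_{L^\infty(B_1)}\le 4\eps^{-1}\|f\|_{L^\infty(B_1)}$), and then reduces to the case $\|f\|_{L^\infty}\le \eps_0\|u-H\|_{L^\infty}$, where the contradiction sequence satisfies $\Delta u_k=o(\eps_k)$ and the arguments of \cref{sec:3} go through essentially verbatim --- exactly your cases $\delta=\infty$ and $\delta=0$. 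The one genuine divergence is your treatment of $\delta\in(0,\infty)$: you propose to carry the bounded source through the linearization, obtaining inhomogeneous versions of \eqref{e:trasmission} and \eqref{e:twomembrane} and extending \cref{lm:partialharnack1,lm:partialharnack2}, \cref{l:touching} and \cref{l:reg_twom} to a nonzero right-hand side. This is not needed: your own argument for $\delta=\infty$ applies verbatim whenever $\delta>0$, since $\|u_{k,\rho}-H_k\|_{L^\infty(B_1)}\le \rho^{-1}\eps_k=\rho^{-1}\delta_k^{-1}\|f_k\|_{L^\infty}\le k\,\|f_k\|_{L^\infty}$ once $k\ge \rho^{-1}\delta_k^{-1}$, which already contradicts the negation. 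Noticing this collapses the trichotomy to a dichotomy and spares you the delicate (though in principle feasible) verification that the partial Harnack barriers, the Kinderlehrer--Nirenberg-type comparison functions, and the thin-obstacle regularity all tolerate a bounded source; moreover it avoids the issue that in your inhomogeneous route the radius $\rho$ extracted from the decay of the limiting problem would a priori depend on $\delta$, which would require extra bookkeeping to remove. In short: the proof is right, but the middle case should be absorbed into the trivial one rather than into the compactness machinery.
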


\begin{proof}
Note that \eqref{e:iof_thesis2}  is satisfied with \(\tilde \alpha=\alpha\) and \(\bo{e}=\bo{e}_d\), \(\rho=\sfrac{1}{4}\) and \(C=C(\eps)\) if
\[
\|f\|_{L^\infty (B_1)}\ge \eps \|u-H_{\alpha,\bo{e}_d}\|_{L^\infty(B_1)}.
\]
Hence it is enough to show that there exists \(\eps_0\) universal such that the conclusion of the theorem holds provided 
\begin{equation*}
\|u-H_{\alpha,\bo{e}_d}\|_{L^\infty(B_1)}\le \eps_0\qquad\text{for some}\qquad L\ge \alpha\ge\lambda_+\,
\end{equation*}
and 
\begin{equation}\label{e.opicolo}
\|f\|_{L^\infty (B_1)}\le \eps_0 \|u-H_{\alpha,\bo{e}_d}\|_{L^\infty(B_1)}.
\end{equation}
We can then argue by contradiction as in the proof of \cref{t:eps_regularity} by noticing that, thanks to \eqref{e.opicolo} the contradicting sequence satisfies 
\[
\Delta u_k=o(\eps_k).
\]
This allows to almost verbatim repeat the proofs in \cref{sec:3}, see for instance \cite{desilva,dfs}.
\end{proof}

\appendix
\section{Proof of \cref{l:touching}} \label{s:app}
Here we prove \cref{l:touching}. The idea to construct the comparison functions is to perform (the inverse of)  the   changed of variable used in  \cite{KN} (and attributed to Friederichs) which maps, for smooth solutions,  the free boundary problem, to a fixed boundary (non linear) problem on a fixed domain, see \cite[Section 3]{KN}.

\begin{proof}[Proof of \cref{l:touching}]
We divide the proof into several steps:

\medskip
\noindent
\(\bullet\)\emph{Step 1}: Given  \(\alpha>0\) function \(P\in  C^1(\overline{B^+_{\sfrac{1}{2}}})\cap C^2(\overline{B^+_{\sfrac{1}{2}}}) \) there exist \(\bar \eps \ll1 \), depending only on the \(C^1\) norm of \(P\) such that for all \(\eps\le \bar \eps\) there exists a  function   \(Q\in C^1 (\overline{\{Q>0\}})\cap C^2(\{Q>0\})\) such that
\begin{equation}\label{e:juvemerda}
Q_\eps(y', y_d-\eps \alpha  P(y',y_d)))=\alpha y_d\qquad \text{for all \(y=(y',y_d)\in \overline{\{Q>0\}}\)}
\end{equation}
To this end we define the following map \(\bo{T}_\eps:\overline{B^+_{\sfrac{1}{2}}}\to \R^d\):
\[
\bo{T}_\eps(x',x_d)=(x', x_d-\eps \alpha P(x',x_d))\qquad x=(x',x_d)\in B^+_{\sfrac{1}{2}}.
\]
Note that if \(\eps \ll \|P\|_{C^1}^{-1}\) , \(\bo{T_\eps}\) induces a bijection between \(B^+_{\sfrac{1}{2}}\) and \(U_\eps:=\bo{T}_\eps(B^+_{\sfrac{1}{2}})\subset B_1\). We let \(\bo{Q}_\eps\) be its inverse and we define \(Q_\eps\) as its \(d\)-th component times \(\alpha\), namely
\[
Q_\eps:=\alpha (\bo{Q}_\eps\cdot \bo{e}_d ): U\to (0,1/2),
\]
and we extend it to zero on \(B_{\sfrac1 2} \setminus \overline{\{Q>0\}}\).
It is now immediate to verify that \eqref{e:juvemerda} is satisfied. Furthermore, with  the notation \(y_\eps=\bo{T}\eps(x)\),
\begin{equation}\label{e:juvemerda2}
\nabla Q_\eps(y_\eps)=\alpha \bo{e_d}+\alpha\eps \nabla P(x)+O(\eps^2)
\end{equation}
and
\begin{equation}\label{e:juvemerda3}
\Delta Q(y_\eps)=\alpha^2 \eps \Delta P (x)+O(\eps^2)
\end{equation}

\medskip
\noindent
\(\bullet\)\emph{Step 2}: Let us now prove item  (i) of the statement, item  (ii) can be obtained by a symmetric argument.  Let \(\alpha_k\), \(\eps_k\) a be as in the statement. Let us assume that \(P_+\) is a strictly subharmonic function touching \(v_+\) strictly form below at \(x_0\). By assumption, for all \(\delta\ll 1\) the function \(v_+-P_-+\delta\) has  a strictly positive minimum at  \(  x_0\) as \(\delta \to 0\). Let \(Q^\delta_k\) be the functions constructed in Step 1 with \(\eps=\eps_k\),  \(\alpha=\alpha_k\) and \(P=P_--\delta\). Let us define 
\[
P^\delta _k(x)=\frac{Q^\delta_k-\alpha_k x_d^+}{\alpha_k \eps_k}
\]
and 
\[
\tilde \Gamma_k=\bigl\{ (x,P^\delta _k(x)) \qquad x \in \overline{\{Q^\delta_k>0\} \cap B_{\sfrac{1}{2}}} \bigr\}.
\]
One easily checks that they converge in the Hausdorff distance to 
\[
\tilde \Gamma = \bigl\{ (x,P_+(x)-\delta) \qquad x \in \overline{B^+_{\sfrac{1}{2}} }\bigr\}.
\]
By using that the graphs \(\Gamma_k^+\) defined in \cref{cor:comp} converges in the Hausdorff distance to 
\[
\tilde \Gamma = \bigl\{ (x, v_+ (x)) \qquad x \in \overline{B^+_{\sfrac{1}{2}}}\bigr\}.
\]
We claim that 
\[
\{Q^\delta_k>0\}\cap B_{\sfrac{1}{2}}\Subset  \{u_k>0\}\cap B_{\sfrac{1}{2}}.
\]
Indeed otherwise one would find a sequence of points \(x_k\) such that \(Q^\delta_k(x_k)>0\) and \(u^+(x_k)=0\) which implies that  
\[
P^\delta _k(x_k)\ge v_{+, k}(x_k)
\]
where \(v_{+, k}\) is define in \cref{cor:comp}. Assuming that \(x_k\to \bar x\)  we get \(P^+(\bar x)-\delta \ge v_+(\bar x) \) in contradiction with   \(P-\delta< v_+\).

In particular there exists \(\sigma=O(\delta)\) such that \(Q^\delta_k(\cdot-\sigma \bo{e}_d)\) touches \(u^+\) from below at some point \(x^\delta_k\). Note also that, arguing as above)  \(x^\delta_k\to x_0\) as \(k\) goes to infinity. By \eqref{e:juvemerda3}  and the strict subharmonicity of \(P\) one has that 
\[
\Delta Q^\delta_k>0 \qquad \text{on \(Q^\delta_k>0\)}
\]
Hence the touching point lies on the free boundary \(\partial \Omega_{u_k}^+\). Furthermore by \eqref{e:juvemerda2}
\[
\begin{split}
\nabla Q^\delta _\eps(x_k^\delta)&=\alpha \bo{e_d}+\alpha\eps \nabla P_+(\bo{Q}_{\eps_k}(x^k_\delta))+O(\eps^2)
\\
&=\alpha \bo{e_d}+\alpha\eps \nabla P_+(x_0)+\eps_kO(\abs{x^k_\delta-x_0})+O(\eps^2)
\end{split}
\]
Choosing a sequence \(\delta_k\to 0\) we obtain the desired conclusion.

\medskip
\noindent
\(\bullet\)\emph{Step 3}: We now prove item (iii). The proof goes exactly as above, more precisely we let \(P\) be as in the statement and we define \(P^\pm \) as \(P\) restricted to \(B_{\sfrac{1}{2}}^\pm\). We let also \(\bo{T}^\pm: B_{\sfrac{1}{2}}^\pm\) be the corresponding transformations as in Step 1(with \(\bo{T}^-\) defined in the obvious way on \(B_{\sfrac{1}{2}}^-\)). The key point is to note that 
\[
\bo{T}^+(B_{\sfrac{1}{2}}^+)\cap \bo{T}^-(B_{\sfrac{1}{2}}^-)=\emptyset.
\]
Hence, with obvious notation, the function \footnote{Note that if \(Q^-\) is the \(d\)-th component of the inverse of \(\bo{T}^-\) then it is \emph{negative}!}
\[
Q=Q^++Q^-
\]
is a well defined comparison function. Arguing as in Step 2 gives the desired sequence.
\end{proof}

\section{Proof of \cref{l:reg_twom}}\label{app:twomembrane}

Give a solution \(v\) we  define \(w\)
\[
w_\pm(x)=v_{\pm}(x)-\frac{\ell}{\lambda_\pm^2} x_d\qquad x \in B^{\pm}_{\sfrac{1}2}.
\]
It is straightforward to check it is a  viscosity solution of 

\begin{equation*}
		\begin{cases}
		\Delta w_{\pm}=0 \qquad&\text{in \(B^\pm_{\sfrac{1}{2}}\)}
		\\
	\partial_{d} w_{\pm} \ge 0 &\text{in \(B_{\sfrac{1}{2}}\cap\{x_d=0\}\)}
		\\
	\partial_{d} w_{\pm}  =0 &\text{in \(\mathcal J\)}
		\\
		\lambda_+^2\partial_{d} w_{+} =\lambda_-^2 \partial_{d} w_{-}  &\text{in \(\mathcal C\)}
		\\
		 w_+\leq w_- & \text{in \(B_{\sfrac{1}{2}}\cap \{x_d=0\}\) }
		\end{cases}\,.
		\end{equation*}
Furthermore one  can easily check that  
\[
w_\pm(x',x_d)=\frac{1}{\lambda_\pm^2} w_N(x', \mp x_d)-w_S(x',\mp x_d)\,,
\]
where \(w_N\) solves the Neumann problem 
\begin{equation*}\label{e:lin1}
\begin{cases}
\Delta w_N=0 & \text{on }B_{\sfrac12}^-
\\
\de_dw_N=0&\text{on }B_{\sfrac12}^-\cap\{x_d=0\}
\end{cases}\,,
\end{equation*}
and  \(w_S\) is a solution of the thin obstacle problem
\begin{equation*}\label{e:lin2}
\begin{cases}
\Delta w_S=0 & \text{on }B_{\sfrac12^-}
\\
w_S\geq 0 &\text{on }B_{\sfrac12}^-\cap\{x_d=0\}
\\
\de_dw_S\geq0&\text{on }B_{\sfrac12}^-\cap\{x_d=0\}
\\
w_S\,\de_dw_S=0&\text{on }B_{\sfrac12}^-\cap\{x_d=0\}
\end{cases}\,.
\end{equation*}
Clearly \(w_N\in C^{\infty}(\overline{B_{\sfrac14}^+})$  with 
\[
\|w_N\|_{C^{k}(B_{\sfrac14})}\leq C_k\|w_N\|_{L^\infty(B_{\sfrac{1}2})}.
\]
On the other hand, by \cite[Corollary pg. 58]{AtCa}, $w_S\in C^{1,\sfrac12}(\overline{B_{\sfrac14}^+})$ with 
\[
\|w_S\|_{C^{1,1/2}(B_{\sfrac14})}\leq C\|w_S\|_{L^\infty(B_{\sfrac{1}2})}.
\]
From the last two estimates and the definition of \(w\) it is easy to deduce the conclusion of the Lemma. 
\qed

\end{document}